\newtheorem{proposition}{Proposition}[section]
\newtheorem{theorem}[proposition]{Theorem}
\newtheorem{definition}[proposition]{Definition}
\newtheorem{corollary}[proposition]{Corollary}
\newtheorem{lemma}[proposition]{Lemma}
\newtheorem{remark}[proposition]{Remark}
\numberwithin{equation}{section}
\newtheorem{example}[proposition]{Example}
\newcommand*\samethanks[1][\value{footnote}]{\footnotemark[#1]}
\title{Control theory and splitting methods}
\author{Karine Beauchard\texorpdfstring{\thanks{Univ Rennes, ENS Rennes, INRIA, CNRS, IRMAR - UMR 6625, F-35000 Rennes, France}}{},
Adrien Busnot Laurent\texorpdfstring{\samethanks}{},
Fr\'ed\'eric Marbach\texorpdfstring{\thanks{DMA, École normale supérieure, Université PSL, CNRS, 75005 Paris, France}}{}}
\newcommand{\N}{\mathbb{N}}
\newcommand{\K}{\mathbb{K}}
\newcommand{\C}{\mathbb{C}}
\newcommand{\R}{\mathbb{R}}
\newcommand{\cA}{\mathcal{A}} 
\newcommand{\cB}{\mathcal{B}} 
\newcommand{\cC}{\mathcal{C}} 
\newcommand{\cE}{\mathcal{E}} 
\newcommand{\cL}{\mathcal{L}} 
\newcommand{\cR}{\mathcal{R}} 
\newcommand{\cU}{\mathcal{U}} 
\newcommand{\cZ}{\mathcal{Z}} 
\newcommand{\fD}{\mathfrak{D}}
\newcommand{\ft}{\mathfrak{t}}
\newcommand{\ad}{\operatorname{ad}}
\newcommand{\BCH}{\operatorname{BCH}}
\newcommand{\Br}{\operatorname{Br}}
\newcommand{\Hol}{\operatorname{Hol}}
\newcommand{\Id}{\operatorname{Id}}
\newcommand{\Lie}{\operatorname{Lie}}
\newcommand{\Op}{\operatorname{Op}}
\newcommand{\sign}{\operatorname{sign}}
\newcommand{\val}{\operatorname{val}}
\newcommand{\vect}{\operatorname{span}}
\newcommand{\dd}{\,\mathrm{d}}
\newcommand{\eval}{\text{\normalfont\scshape e}}
\newcommand{\intset}[1]{\llbracket #1 \rrbracket}
\begin{document}

\maketitle

\begin{abstract}
    Our goal is to highlight some deep connections between numerical splitting methods and control theory.
    We consider evolution equations of the form $\dot{x} = f_0(x) + f_1(x)$, where $f_0$ encodes non-reversible dynamics, motivating schemes that involve only forward flows of $f_0$.
    In this context, a splitting method can be interpreted as a trajectory of the control-affine system $\dot{x}(t)=f_0(x(t))+u(t)f_1(x(t))$, associated with a control~$u$ that is a finite sum of Dirac masses.
    The goal is then to find a control such that the flow generated by $f_0 + u(t)f_1$ is as close as possible to the flow of $f_0+f_1$.

    Using this interpretation and classical tools from control theory, we revisit well-known results on numerical splitting methods and prove several new ones.
    First, we show that there exist numerical schemes of arbitrary order involving only forward flows of $f_0$, provided one allows complex coefficients for $f_1$.
    Equivalently, for complex-valued controls, we prove that the Lie algebra rank condition is equivalent to small-time local controllability.
    Second, for real-valued coefficients, we show that the well-known order restrictions are linked to so-called ``bad'' Lie brackets from control theory, which are known to obstruct small-time local controllability.
    We investigate the conditions under which high-order methods exist, thanks to a basis of the free Lie algebra that we recently constructed.
\end{abstract}

\setcounter{tocdepth}{1}
\tableofcontents

\section{Introduction}

In this article, we highlight some deep connections between numerical splitting methods and control theory and use this perspective to provide new proofs of known results and conjectures in the order theory of splitting methods.
We focus on situations where $f_0$ encodes heuristically non-reversible dynamics, so that one is interested in schemes only involving forward flows of $f_0$.

\subsection{Order theory for splitting methods}
\label{sec:biblio}

\paragraph{Splitting methods}
Splitting methods aim to solve numerically evolution equations of the form
\begin{equation}
    \label{ODE}
    \dot{x}(t)=f_0(x(t))+f_1(x(t))+\dots+f_p(x(t)),
\end{equation}
where the flows associated with the vector fields $f_j$ are easy to integrate numerically with high precision or an exact solution is available. 
For simplicity, we restrict to the case $p = 1$.

For a given time $T$, a splitting method approximates $x(T)$ by composing flows associated with the $f_j$.
A standard splitting method is of order $N$ for \eqref{ODE} if for all smooth vector fields $f_0$, $f_1$, the following estimate holds
\begin{equation}
    \label{general_splitting}
    x(T)= e^{\alpha_1 T f_0} e^{\beta_1 T f_1} \dots e^{\alpha_k T f_0} e^{\beta_k T f_1} x(0) + \underset{T \to 0}{O}(T^{N+1}),
\end{equation}
where $e^{\alpha_j T f_j}$ denotes the flow at time $\alpha_j T$ associated with the vector field $f_j$.

For instance, the Lie--Trotter splitting is of order one:
\begin{equation}
    \label{eq:Lie-Trotter}
    x(T)=e^{T f_0} e^{T f_1} x(0) +\underset{T \to 0}{O}(T^{2}),
\end{equation}
and the Strang splitting is of order two:
\begin{equation}
    \label{eq:Strang}
    x(T)=e^{T/2 f_0} e^{T f_1}e^{T/2 f_0} x(0) +\underset{T \to 0}{O}(T^{3}).
\end{equation}
Splitting methods are popular for their straightforward implementation, versatility, accuracy, and stability. 
They also exhibit good geometric properties \cite{Blanes24smf,Hairer06gni,McLachlan02sm}, for instance preserving energy, volume, symmetries, or symplecticity.
They are widely used for the numerical approximation of ODEs and PDEs \cite{Blanes05otn,Faou12gni,Lubich08fqt}, or in stochastic settings, for instance in molecular dynamics \cite{Chada24ukl,Leimkuhler13rco}.

\paragraph{Order theory}

In this paper, we focus on the construction of high-order splittings (and not on the stability analysis or the preservation of geometric properties).
The order theory of splitting methods relies on the Baker--Campbell--Hausdorff formula or the Magnus formula \cite{Magnus54ote}. 
In particular, methods of arbitrarily high order can be constructed, for instance with composition methods.
The derivation of the order conditions can be found in \cite{McLachlan02sm}.
The modern formulation is based on the algebraic framework of free Lie algebras \cite{Reutenauer93fla} or Hopf algebras (see the word series \cite{Murua17wsf, Murua18hat} and the review \cite[Sec.\ 2]{Blanes24smf}).
Free Lie algebras are also used in control theory \cite{P1,MR872457}.

\paragraph{Order theory for semigroups}
The order theory becomes more involved when positivity constraints are imposed on the coefficients.
We study the existence of splitting schemes with coefficients in $(\mathbb{A},\mathbb{B})$, that is, when $\alpha_i\in\mathbb{A}$, $\beta_i\in\mathbb{B}$ for all~$i$.
For non-reversible problems (such as the heat equation or stochastic equations), one must impose a condition of the form $\alpha_i>0$ on the coefficients in \eqref{general_splitting}. 
In this context, it is known \cite{Blanes05otn,Goldman96nto,Sheng89slp, Suzuki91gto} that the maximum order for splitting methods with coefficients in $(\R^+,\R)$ (sometimes called forward splitting methods) is $N=2$.

To go further, a possible solution is to use complex coefficients. 
Complex coefficients first appeared in the context of Hamiltonian systems \cite{Chambers03siw} and quantum mechanics \cite{Bandrauk06cis, Prosen06hon} with low order, then in \cite{Castella09smw,Hansen09hos} for parabolic problems, in the spirit of \cite{Hansen09esf}.
The large number of complex solutions for the order conditions offers more flexibility in the choice of coefficients, and can lead to schemes with smaller truncation errors and new symmetries.
On the other hand, the use of complex arithmetic incurs an additional cost when solving real-valued problems, and extending the flows to the complex plane must be handled carefully to avoid order reduction.

The papers \cite{Castella09smw,Hansen09hos} use symmetric composition methods to create splitting methods in $(\C^+,\C^+)$ up to order 14 (i.e.\ where all coefficients are complex numbers with positive real part), though the error constants deteriorate in some cases~\cite{Blanes13oho}.
In \cite{Blanes13oho}, it is proved that splitting methods in $(\C^+,\C^+)$ exist up to order 44, by building upon a splitting method of order 6 in $(\R^+,\C^+)$.
In this paper, we prove in particular that splittings in $(\R^+,\C)$ exist up to any order, thereby \emph{positively answering the open question} raised in \cite[Remark~2.7]{Blanes13oho} (with unconstrained complex $\beta_i$).

\paragraph{Commutator flows and degeneracies}
An alternative solution to go beyond the order barrier is to introduce flows associated to specific commutators in the splitting methods, assuming that these flows are explicitly available.
The first example in the literature is the Takahashi--Imada splitting \cite{Takahashi84mcc} (see also \cite{Chin97sif,Koseleff93cfp,Rowlands91ana}). 
In the context of Hamiltonian dynamics, such splittings are known as \emph{splitting methods with modified potentials} \cite{Lopez97esi, Rowlands91ana, Wisdom96sc}. 
We refer to \cite[Sections 3.2 and 8]{Blanes24smf} for a survey of splittings with commutators in the literature.

One must distinguish two concepts (see \cref{sec:flows-vs-degeneracies}).
First, one can look for general splitting methods where one allows the use of commutator flows. 
Second, one can exploit the degeneracies of specific systems for which some commutators vanish.
Motivated by \cite{Blanes05otn}, we provide new existence results for schemes in $(\R^+,\R)$ with commutators and necessary degeneracy conditions to obtain high order schemes.
In particular, we \emph{prove the conjecture} of \cite[Section V]{Chin2004}.

\paragraph{Link with control theory}

The link between splitting methods with $\mathbb{A} = \R^+$ and control theory is the following.
A splitting method can be seen as a trajectory of the control system
\begin{equation}
    \label{eq:x-f0f1}
    \dot{x}(t)=f_0(x(t))+u(t)f_1(x(t))
\end{equation}
associated with a control $u$ that is a sum of Dirac masses.
Here, $\alpha_j$ denotes the duration of the $j$-th step and $\beta_j$ the amplitude of the corresponding jump.
The goal is that the flow of the time-dependent vector field $f_0 + u(t) f_1$ on $[0,T]$ approximates $e^{T(f_0+f_1)}$ for some $T > 0$.

With this in mind, we revisit known results on numerical splitting methods, and prove new ones.
In particular, we identify Lie brackets that are obstructions to both high order numerical splitting methods and small-time local controllability.

\subsection{Definitions and notations}

\subsubsection{Formal brackets and evaluated Lie brackets}

\begin{definition}[Formal brackets]
    \label{def:free-magma}
    Let $X=\{X_0,X_1\}$ be a set of two indeterminates.
    We denote by $\Br(X)$ the \emph{free magma} over $X$, which can be defined by induction: $X \subset \Br(X)$ and, if $a, b \in \Br(X)$, then the ordered pair $(a, b)$ belongs to $\Br(X)$. 
    For $b \in \Br(X)$, let $n_0(b)$ and $n_1(b)$ be the number of $X_0$ and $X_1$ appearing in $b$.
    The length of $b$, denoted $|b|$, thus satisfies $|b| = n_0(b) + n_1(b)$.
\end{definition}

\begin{definition}
    \label{def:Mnu-Wj}
    We define the formal brackets $M_0 := X_1$, and for $j \geq 1$,
    \begin{equation}
        M_j := (M_{j-1}, X_0)
        \quad \text{and} \quad
        W_j := (M_{j-1}, M_j).
    \end{equation}
    The brackets $M_j$ contain $X_1$ only once and satisfy $|M_j|=j+1$. 
    The brackets $W_j$ contain $X_1$ twice and satisfy $|W_j|=2j+1$. 
    For instance $M_1=(X_1,X_0)$ and $W_1=(X_1,(X_1,X_0))$.
\end{definition}

\begin{definition}[Lie bracket of vector fields]
    \label{def:Lie-fields}
    For smooth vector fields $f, g$, denote by $[f,g] = \ad_f(g)$ their Lie bracket which is the smooth vector field defined by $[f,g](x)=Dg(x)f(x)-Df(x)g(x)$. 
    We also use the notation $\ad_f^{n+1}(g)=[f,\ad_f^n(g)]$ for $n \in \N$.
\end{definition}

\begin{definition}[Evaluated Lie bracket]
    \label{def:eval-fb}
    If $f_0,f_1$ are smooth vector fields and $b \in \Br(X)$, we denote by $f_b$ the vector field obtained by replacing each $X_j$ with $f_j$ in $b$. 
    For instance, with the notations above, $f_{M_0} = f_{X_1} = f_1$, $f_{M_1}=[f_1,f_0]$ and $f_{W_1}=[f_1,[f_1,f_0]]=\ad_{f_1}^2(f_0)$.
\end{definition}

\subsubsection{Splitting methods}

In the sequel, we rely on the following definition of the order of a splitting method, which reflects the asymmetry between coefficients allowed along $f_0$ and $f_1$.

\begin{definition}[Order of a splitting method with $(\mathbb{A},\mathbb{B})$ coefficients]
    \label{def:SM}
    Let $\mathbb{A},\mathbb{B} \subset \C$ and $N \in \N^*$.
    We say that $\alpha=(\alpha_1,\dots,\alpha_k) \in \mathbb{A}^k$ and $\beta = (\beta_1,\dots,\beta_{k}) \in \mathbb{B}^k$ with $k \in \N^*$ is a \emph{splitting method of order (at least) $N$} when, for all $f_0, f_1 \in \cC^\infty(\K^d;\K^d)$ (with $\K = \R$ or $\C$ and $d \in \N^*$),
    \begin{equation}
        \label{Def_SM:estim}
        e^{T(f_0+f_1)} = e^{\alpha_1 T f_0} e^{\beta_1 T f_1} \dotsb e^{\alpha_k T f_0} e^{\beta_k T f_1} + \underset{T \to 0}{O}\left(T^{N+1}\right)
    \end{equation}
    in the following sense: for all $x_0 \in \K^d$, there exists $C=C(f_0, f_1, x_0)$ such that, for $T$ small enough,
    \begin{equation}
        \left|e^{T(f_0+f_1)} x_0 - e^{\alpha_1 T f_0} e^{\beta_1 T f_1} \dotsb e^{\alpha_k T f_0} e^{\beta_k T f_1} x_0 \right| \leq C T^{N+1}.
    \end{equation}
\end{definition}

When complex coefficients appear, we implicitly assume that the vector fields are holomorphic, so that their flows are well defined for sufficiently small complex times (see also \cref{sec:holomorphic-systems}).


In \cref{def:SM}, the parameters $k, \alpha, \beta$ must be independent of $f_0$, $f_1$, and $T$.
A weaker notion consists in requiring \eqref{Def_SM:estim} only for a subclass of pairs of vector fields, possibly satisfying additional structural relations, enabling higher-order methods.

\begin{definition}[Splitting method relative to a subclass]
    Let $\mathcal{P}$ be a set of pairs of vector fields.
    We say that a splitting method is \emph{of order $N$ relative to $\mathcal{P}$} when \eqref{Def_SM:estim} holds for any $(f_0,f_1) \in \mathcal{P}$.
\end{definition}

The splitting methods of \cref{def:SM} only involve flows of $f_0$ and $f_1$.
For some systems, although the flow of $f_0 + f_1$ is not directly available, one may also have access to the flows of some specific commutators of $f_0$ and $f_1$ (for example, the flow of $f_{W_1} = [f_1,[f_1,f_0]]$, and more generally the flow of $f_b$ for some $b \in \Br(X)$).
This leads to the following definition where we allow a set $\cE \subset \Br(X) \setminus \{ X_0 \}$ of ``enabled'' brackets.

\begin{definition}[Splitting method with commutator flows]
    Let $\cE \subset \Br(X) \setminus \{ X_0 \}$.
    With the notations of \cref{def:SM}, a \emph{splitting method of order $N$ involving $X_0$ and $\cE$} is specified by additional data $b_1, \dotsc, b_k \in \cE$ such that
    \begin{equation}
        e^{T(f_0+f_1)} =
        e^{\alpha_1 T f_0}
        e^{\beta_1 T^{|b_1|} f_{b_1}} e^{\alpha_2 T f_0}
        e^{\beta_2 T^{|b_2|} f_{b_2}} \dotsb
        e^{\alpha_k T f_0} e^{\beta_k T^{|b_k|} f_{b_k}}
        + \underset{T \to 0}{O}\left(T^{N+1}\right).
    \end{equation}
    Setting $\alpha_j = 0$ allows one to concatenate multiple flows associated with different enabled brackets.
\end{definition}

\subsubsection{Controllability}

Given smooth vector fields $f_0, f_1, \dotsc, f_m$ defined in a neighborhood of $0 \in \K^d$ where $\K = \R$ or $\C$, we consider the control-affine system
\begin{equation}
    \label{eq:syst-control}
    \dot{x}(t) = f_0(x(t)) + u_1 (t) f_1(x(t)) + \dotsb + u_m(t) f_m(x(t)),
\end{equation}
where $u_j \in L^1((0,T);\K)$ are the control inputs.
When there is no control in front of $f_0 \neq 0$, such a system is called ``with drift'' and one typically assumes that $f_0(0) = 0$ and studies its behavior near the equilibrium $(x,u) = (0,0)$.
When there is a control $u_0(t)$ in front of $f_0$ (or equivalently when $f_0 = 0$), the system is called ``driftless'' and $(x,u) = (0,0)$ is still an equilibrium.

Among the many notions of controllability (see \cite[Section~1.2]{P2} for further discussion), we use:

\begin{definition}[Small-state STLC]
    \label{def:STLC}
    We say that \eqref{eq:syst-control} is \emph{small-state small-time locally controllable} at $0$ when, for every $T > 0$, for every $\delta > 0$, there exists $r > 0$ such that, for every target state $x^* \in B(0,r)$, there exists $u \in L^1((0,T);\K^m)$ such that the solution to \eqref{eq:syst-control} associated with the initial condition $x(0) = 0$ and the control $u$ satisfies $x(T) = x^*$ and $x([0,T]) \subset B(0,\delta)$.
\end{definition}

\subsection{Main results}
\label{section:main_results}

\subsubsection{Arbitrary order \texorpdfstring{$(\R,\R)$}{(R,R)} splitting methods}

It is well-known that splitting methods with $(\R,\R)$ coefficients of arbitrary order exist.
In our notation, one has the following classical result.

\begin{theorem}
    \label{thm:main-Chow-split}
    For every $N\in\N^*$, there exists an $(\R,\R)$ splitting method of order $N$.
\end{theorem}

Without sign constraints on the real coefficients, an $(\R,\R)$ splitting method corresponds to a trajectory of the driftless control-affine system
\begin{equation}
    \label{syst_ss_drift}
    \dot{x}(t)= u_0(t) f_0(x(t)) + u_1(t) f_1 (x(t))
\end{equation}
with state $x(t) \in \R^d$ and control $(u_0,u_1):\R^+\to \R^2$, where $u_0$ and $u_1$ are piecewise constant functions with disjoint supports and do not depend on $(f_0,f_1)$.

In control theory, the analogue of \cref{thm:main-Chow-split} is the following result, known as the Chow--Rashevskii necessary and sufficient condition for the controllability of driftless control-affine systems \cite{MR0001880,rashevsky1938connecting}, involving the ``Lie algebra rank condition'' $\Lie_\R(f_0,f_1)(0) = \R^d$, where $\Lie_\R(f_0,f_1)$ denotes the Lie algebra spanned over $\R$ by $f_0$ and $f_1$ (see \cite[Theorems 3.17 and 3.19]{coron:hal-03588552}).

\begin{theorem}
    \label{thm:main-Chow-control}
    Let $f_0, f_1$ be real-analytic vector fields on a neighborhood of $0$ in $\R^d$.
    System~\eqref{syst_ss_drift} is small-state STLC with real-valued controls $u_0,u_1$ if and only if $\Lie_\R(f_0, f_1)(0)=\R^d$.
\end{theorem}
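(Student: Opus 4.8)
The plan is to derive both directions from the orbit theorem in its real-analytic (Nagano) form, exploiting the structural feature that \eqref{syst_ss_drift} is \emph{symmetric}: at each point $x$ the set of admissible velocities is the linear subspace $\vect(f_0(x),f_1(x))$. For such a system, the set of states reachable from $0$ by trajectories contained in a fixed open set $V\ni 0$ coincides with the orbit $\mathcal{O}_V$ of $0$ under the family $\{f_0|_V,f_1|_V\}$: the inclusion ``$\supseteq$'' holds because every point of $\mathcal{O}_V$ is reached by a concatenation of finitely many flows of $\pm f_0,\pm f_1$ within $V$, which is a piecewise-constant-control trajectory of \eqref{syst_ss_drift}; the inclusion ``$\subseteq$'' holds because the velocity of any trajectory lies pointwise in the orbit distribution, so the trajectory cannot leave $\mathcal{O}_V$. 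By Sussmann's orbit theorem $\mathcal{O}_V$ is an immersed submanifold of $V$, and since $f_0,f_1$ are real-analytic, Nagano's theorem gives $T_0\mathcal{O}_V=\Lie_\R(f_0,f_1)(0)$.

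\step{Necessity} Suppose $\Lie_\R(f_0,f_1)(0)\subsetneq\R^d$ and take $V$ to be the whole domain of $f_0,f_1$. Then $\mathcal{O}_V$ is an immersed submanifold of dimension $<d$, hence has empty interior in $\R^d$, so it contains no ball $B(0,\delta)$ with $\delta>0$. Since the reachable set from $0$ is contained in $\mathcal{O}_V$, \eqref{syst_ss_drift} is not small-state-STLC. This implication truly requires analyticity; smooth counterexamples are classical.

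\step{Sufficiency} Assume $\Lie_\R(f_0,f_1)(0)=\R^d$ and fix $T>0$ and $\varepsilon>0$. Since a family $f_{b_1},\dots,f_{b_d}$ of iterated brackets forming a basis of $\R^d$ at $0$ stays linearly independent on a neighborhood of $0$, there is an open connected set $V$ with $0\in V\subseteq B(0,\varepsilon)$ on which $\Lie_\R(f_0,f_1)(x)=\R^d$ for all $x$. Then $\mathcal{O}_V$ is an immersed submanifold of $V$ with $T_0\mathcal{O}_V=\R^d$, hence an open subset of $V$, so $B(0,\delta)\subseteq\mathcal{O}_V$ for some $\delta>0$. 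Each target $x^*\in B(0,\delta)$ is thus reached from $0$ by a trajectory of \eqref{syst_ss_drift} that stays in $V\subseteq B(0,\varepsilon)$ and consists of a concatenation of finitely many flows, hence has finite total duration. Finally, driftless control-affine systems are invariant under affine time reparametrization (rescaling time while rescaling the controls correspondingly maps a trajectory to a trajectory with the same image), so this trajectory can be reparametrized onto the prescribed interval $[0,T]$ without changing its image. This is exactly small-state-STLC in the sense of \cref{def:STLC}. Alternatively, one can give a constructive proof, realizing each flow $e^{s f_{b_i}}$ up to higher order in $s$ by nested commutators of flows of $f_0$ and $f_1$ with elementary durations of order $|s|^{1/|b_i|}$ and concluding from an open-mapping property of the resulting endpoint map; this is the variant relevant to the splitting interpretation, but it is not needed here.

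\step{Main obstacle} The argument is light, so the care lies in invoking the right statements correctly: the identification of the reachable set within $V$ with the orbit $\mathcal{O}_V$ (classical for symmetric systems, via the Stefan--Sussmann theory for the $L^1$ case), the local form of the orbit and Nagano theorems on the manifold $V$, and the observation that the compatibility of the ``small-state'' and ``small-time'' requirements is precisely what the time-rescaling invariance of driftless systems provides. On the constructive route, the delicate step is instead the non-smooth reparametrization needed to make the bracket-maneuver endpoint map open near the origin.
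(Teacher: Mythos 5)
Your proof is correct, and it takes a genuinely different route from the paper's. You invoke the Stefan--Sussmann orbit theorem together with Nagano's theorem for the real-analytic case, identify the reachable set of the symmetric system~\eqref{syst_ss_drift} inside a neighborhood with the orbit, and conclude by openness of the orbit (for sufficiency) or its negligibility as a lower-dimensional immersed submanifold (for necessity), closing with the time-reparametrization invariance of driftless systems. The paper instead deliberately avoids the orbit theorem: it proves only the sufficiency direction itself (in \cref{Prop:Chow_bis}, citing a reference for the other direction), via a quantitative Magnus-expansion scheme --- the abstract full-rank statement \cref{Prop:TX0good_Chow} on the nilpotent Lie group $G^N(X)$, the inverse function theorem, scaling of coordinates of the first kind, the Magnus error estimate \cref{Prop:Approx_rep}, and a Brouwer fixed-point argument to absorb the remainder. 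The reason the paper favors this heavier machinery is explicitly stated: they want \cref{thm:main-Chow-control} and \cref{thm:main-Chow-split} to follow from the \emph{same} abstract lemma \cref{Prop:TX0good_Chow}, so as to make the control/splitting parallel concrete, and the same strategy is then reused in the $(\R^+,\C)$ case where orbit-type arguments would not apply. Your approach is shorter and more classical, but it does not mesh with the article's theme of a unified control--splitting toolbox. Two minor remarks: your invocation of Nagano is overkill for the sufficiency direction, since the orbit theorem already gives $\Lie_\R(f_0,f_1)(0)\subseteq T_0\mathcal{O}_V$, which is all that is needed (and indeed the paper points out its \cref{Prop:Chow_bis} holds for merely smooth vector fields); and for the time reparametrization you should note explicitly that \cref{def:STLC} allows $L^1$ controls of arbitrary magnitude, which is what makes the rescaling admissible.
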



\subsubsection{Arbitrary order \texorpdfstring{$(\R^+,\C)$}{(R+,C)} splitting methods}

In \cite[Remark~2.7]{Blanes13oho}, it is mentioned that the existence of $(\R^+,\C)$ splitting methods of arbitrary order is an open question. We provide here the following positive answer.

\begin{theorem}
    \label{thm:main-complex-split}
    For every $N\in\N^*$, there exists an $(\R^+,\C)$ splitting method of order $N$.
\end{theorem}

\cref{thm:main-complex-split} extends the lower order methods recalled in \cref{sec:biblio} to arbitrary orders.
It is an abstract existence result.
Indeed, our proof relies on an inversion argument, and only provides a way to compute an approximate solution $(\alpha,\beta)$.

An $(\R^+,\C)$ splitting method corresponds to a trajectory of the scalar-input system~\eqref{eq:x-f0f1} with state $x(t) \in \C^d$ and control $u$ which is a finite sum of Dirac masses with complex amplitudes.
The following control statement is the analogue of \cref{thm:main-complex-split} for the control system \eqref{eq:x-f0f1}.

\begin{theorem}
    \label{thm:main-complex-control}
    Let $f_0, f_1$ be holomorphic vector fields on a neighborhood of $0$ in $\C^d$ with $f_0(0)=0$.
    System \eqref{eq:x-f0f1} is small-state STLC with complex-valued control $u$ if and only if $\Lie_\C (f_0,f_1)(0) = \C^d$.
\end{theorem}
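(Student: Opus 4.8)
\step{Plan} The statement is an equivalence, and the two implications call for rather different arguments; I would treat them separately.

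\step{Necessity} I would split a complex control as $u = u_1 + i u_2$ with real-valued $u_1, u_2$, so that \eqref{eq:x-f0f1} becomes the real control-affine system with drift $\dot x = f_0(x) + u_1(t) f_1(x) + u_2(t)(if_1)(x)$ on $\C^d \cong \R^{2d}$, whose control vector fields are $f_1$ and $if_1$. Since $f_0, f_1$ are holomorphic and $i$ is a constant, every iterated bracket of $f_0, f_1, if_1$ equals $f_{b'}$ or $i f_{b'}$ for some $b' \in \Br(\{X_0,X_1\})$, so — also using $f_0(0) = 0$ — the real subspace $\Lie_\R(f_0,f_1,if_1)(0)$ of $\R^{2d}$ is exactly the realification of the complex subspace $\Lie_\C(f_0,f_1)(0)$. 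If the latter is not all of $\C^d$, its realification is a proper subspace of $\R^{2d}$; since holomorphic maps are real-analytic, the Nagano orbit theorem then confines the orbit of $0$ under \eqref{eq:x-f0f1} to a proper immersed analytic submanifold $\mathcal S \ni 0$. Every trajectory issued from $0$ stays in $\mathcal S$, so the set reachable from $0$ in any time has empty interior and contains no ball $B(0,\delta)$, and \eqref{eq:x-f0f1} is not small-state-STLC.

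\step{Sufficiency} Assume $\Lie_\C(f_0,f_1)(0) = \C^d$. The plan mirrors the proof of \cref{thm:main-Chow-control}: combine a bracket-realization statement with an abstract reduction of the same flavor. I would fix $N$ such that the brackets of length at most $N$ span $\C^d$ at $0$, and choose among them $b_1, \dots, b_d$ with $(f_{b_1}(0), \dots, f_{b_d}(0))$ a $\C$-basis of $\C^d$; note $n_1(b_j) \ge 1$, since brackets with $n_1 = 0$ evaluate to $0$ at the origin (as $f_0(0) = 0$). The core claim is that for every $b \in \Br(\{X_0,X_1\})$ there is a word $w_b$ in the elementary flows $e^{\alpha T f_0}$ (with $\alpha > 0$) and $e^{\beta T f_1}$ (with $\beta \in \C$) whose leading-order displacement from $0$ is $T^{|b|} P_b(\beta)\, f_b(0)$, where $P_b$ is a nonzero polynomial in the jump amplitudes $\beta$, homogeneous of degree $n_1(b)$. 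The decisive point — the one separating the complex from the real theory and explaining why the order barriers of \cite{Sheng89slp,Suzuki91gto} are bypassed — is that a nonzero homogeneous polynomial $P_b$ of degree $m \ge 1$ on $\C^\ell$ is onto $\C$: if $P_b(\beta_0) = c \ne 0$ then $P_b(\lambda \beta_0) = \lambda^m c$ sweeps all of $\C$, whereas over $\R$ it sweeps only a half-line when $m$ is even. Hence over $\C$ every direction $f_b(0)$, and — taking $\beta$ complex — every complex multiple of it, is attainable at leading order with no obstruction from other brackets, whereas over $\R$ exactly these ``bad brackets'' obstruct controllability.

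Concatenating $w_{b_1}, \dots, w_{b_d}$ with complex amplitudes and collecting leading terms produces, for each small $T$, an admissible control (a finite sum of Dirac masses with positive $f_0$-steps and complex amplitudes) depending on a parameter $\theta$ in a neighborhood of $0$ in $\C^d$, whose time-$T$ flow sends $0$ to $\sum_j \theta_j\, T^{\mu_j} f_{b_j}(0)$ plus higher-order corrections, for suitable positive exponents $\mu_j$. Since $(f_{b_j}(0))_j$ is a $\C$-basis, the anisotropic rescaling $\theta_j \mapsto \theta_j T^{\mu_j}$ makes the associated end-point map cover a full ball $B(0,\rho(T))$ with $\rho(T) \to 0$, while its trajectories stay in $B(0,\varepsilon)$ once $T$ is small — here $f_0(0) = 0$ ensures the forced forward drift does not carry the state away from $0$ during the bounded maneuver, and also lets one prepend a wait at $0$ to meet any prescribed final time $T > 0$. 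A Brouwer-degree argument, as behind \cref{thm:main-Chow-control}, then upgrades this approximate surjectivity to exact reachability of every $x^* \in B(0,\delta)$ at time $T$ with trajectory in $B(0,\varepsilon)$, which is the assertion. I expect the main obstacle to be the error analysis in the bracket-realization step: building $w_b$ so that the intermediate-order brackets in its logarithm cancel (this fails over $\R$ for the bad brackets and is made possible here precisely by the complex surjectivity above), checking that the higher-order remainders are absorbed uniformly in $(\theta, T)$ by the rescaling, and ensuring compatibility with the constraint $\alpha > 0$ on the $f_0$-flows — which is precisely the role of the abstract proposition underpinning the Chow-type results.
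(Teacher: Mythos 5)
Your necessity argument is standard and correct, and the paper delegates it to a textbook reference; no issue there. For sufficiency, however, your ``core claim'' --- that for every bracket $b$ one can build a word $w_b$ of flows $e^{\alpha T f_0}$ with $\alpha>0$ and $e^{\beta T f_1}$ with $\beta\in\C$ whose \emph{leading} displacement is $T^{|b|} P_b(\beta) f_b(0)$ --- is not a lemma you can invoke: it is the whole difficulty, and you leave it unproven. In the driftless setting one realizes a bracket by commutator compositions such as $e^{-s f_0} e^{-s f_1} e^{s f_0} e^{s f_1}$; here backward $f_0$-flows are forbidden, so all moves along $f_0$ accumulate a forced positive drift, and the contributions of every shorter bracket must be cancelled to make $f_b(0)$ the leading term. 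You correctly flag this as ``the main obstacle'' together with the cross-terms appearing when concatenating $w_{b_1},\dots,w_{b_d}$, but identifying the obstacle does not surmount it. The remark that a nonzero homogeneous polynomial of positive degree is onto $\C$ is the right ``why it works over $\C$'' intuition, but it does not by itself produce the required cancellations of intermediate brackets under the constraint $\alpha>0$.

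The paper's proof deliberately avoids a bracket-by-bracket realization. It works in the free nilpotent Lie group $\mathcal G^N(X)$ and proves (\cref{Prop:TX0good}) that $\exp_N(TX_0)$ is in the image of the product map $\nu^N_{k,v}$ with a surjective differential, by an induction on the degree of the residual error $Z$ in $\exp_N(TX_0+Z)$: one composes a good element $S$ with its images $\lambda_p(S)$ under the algebra morphisms sending $X_1\mapsto e^{2\pi i p/N}X_1$ for $p=0,\dots,N-1$, and the root-of-unity sums $\sum_p e^{2\pi i j p/N}=0$ for $1\le j\le N-1$ kill \emph{all} error terms of the current lowest degree simultaneously, pushing them one order higher; crucially this composition keeps all $X_0$-durations positive. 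This is where the complex structure is used, and it produces a global local surjection of the end-point map onto a neighborhood of $\exp_N(TX_0)$ without ever isolating a single bracket. One then concludes with scaling, \cref{Prop:Approx_rep_Dirac}, and Brouwer, as you anticipate. If you want to pursue your Sussmann-style route, you would need to actually carry out the construction of the $w_b$ and control the cross-terms, which the paper itself notes is possible via Sussmann's $S(\theta)$ theorem but is more delicate than its chosen argument; as written, your proposal stops at precisely the step that has to be proved.
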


For control theorists, \cref{thm:main-complex-control} may seem surprising at first sight.
Indeed, for control-affine systems with drift of the form \eqref{eq:x-f0f1} and real-valued controls $u$, no necessary and sufficient condition for controllability is known. 
In particular, the Lie algebra rank condition does not imply the controllability, as shown by the canonical example on $\R^2$ given by
\begin{equation}
    \label{eq:syst-W1}
    \begin{cases}
        \dot{x}_1 = u \\
        \dot{x}_2 = x_1^2
    \end{cases}
\end{equation}
for which $f_1(0) = (1,0)$ and $[f_1,[f_1,f_0]](0) = (0,2)$, so that $\Lie_{\R}(f_0,f_1)(0) = \R^2$ but, for real-valued $u$, one has $x_1 \in \R$ so $\dot{x}_2 \geq 0$, preventing controllability, since one cannot reach a target state $(x_1^*,x_2^*)$ with $x_2^*<0$ starting from the initial state $(0,0)$.

\cref{thm:main-complex-control} is nevertheless reasonable, since, as in \eqref{eq:syst-W1}, all known obstructions to controllability rely on the presence of positive drifts in the dynamics (see \cite{P2}).
In particular, one checks that \eqref{eq:syst-W1} is controllable in $\C^2$ with complex-valued controls $u(t) \in \C$ since $i^2 = -1$.

\subsubsection{The first obstruction to controllability and to $(\R^+,\R)$ splitting methods}

It is well known that splitting methods with $(\R^+,\R)$ coefficients suffer from severe order limitations.
In particular, one has the following result (see e.g.\ \cite{Blanes05otn}).

\begin{theorem}
    \label{thm:max-2}
    The maximal order of an $(\R^+,\R)$ splitting method is 2.
\end{theorem}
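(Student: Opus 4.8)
The plan is to prove this by a direct Baker--Campbell--Hausdorff expansion together with a carefully chosen counterexample pair $(f_0,f_1)$ that activates a ``bad'' bracket. First I would expand both sides of \eqref{Def_SM:estim} in powers of $T$ using the BCH/Magnus formalism: writing $S_k(T) := e^{\alpha_1 T f_0} e^{\beta_1 T f_1} \dotsb e^{\alpha_k T f_0} e^{\beta_k T f_1}$ as a single exponential $\exp(\Omega(T))$ where $\Omega(T) = T \Omega_1 + T^2 \Omega_2 + \dotsb$ with each $\Omega_j$ a Lie polynomial in $f_0,f_1$ of homogeneous degree $j$, and comparing with $e^{T(f_0+f_1)}$. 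Order $\geq 1$ forces $\Omega_1 = f_0 + f_1$, i.e.\ $\sum_i \alpha_i = \sum_i \beta_i = 1$; order $\geq 2$ forces the coefficient of the single independent degree-2 bracket $[f_0,f_1]$ in $\Omega_2$ to vanish, which is the classical relation $\sum_{i} \alpha_i \bigl(\sum_{j \leq i}\beta_j\bigr) - \tfrac12 = 0$ (or an equivalent symmetric form). These two conditions are satisfiable with $\alpha_i > 0$ (e.g.\ Strang), so order $2$ is attainable; the content is the converse, that order $3$ is impossible under $\alpha_i > 0$.

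For the obstruction at order $3$, the key step is to isolate, in $\Omega_3$, the coefficient of the bracket $f_{W_1} = [f_1,[f_1,f_0]] = \ad_{f_1}^2(f_0)$ (equivalently $f_{(X_1,(X_1,X_0))}$), which is one of the basis elements of the degree-$3$ part of the free Lie algebra on $\{X_0,X_1\}$. A BCH computation shows this coefficient is, up to sign and a nonzero constant, a quadratic expression in the $\alpha_i,\beta_i$ of the shape $\sum_i \alpha_i \left(\sum_{j<i}\beta_j\right)^2$ plus lower-order-in-structure corrections fixed by the order-$1$ and order-$2$ relations --- crucially a sum with strictly positive weights $\alpha_i$ when $\alpha_i > 0$. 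I would then verify that after substituting the order $\leq 2$ constraints this expression is of the form $c \sum_i \alpha_i \left(P_i\right)^2 + c'$ for an explicit partial sum $P_i$ of the $\beta_j$, and argue by a convexity/positivity estimate that it cannot equal the target value required for order $3$: the weighted sum of squares is bounded below by its value at the ``mean'' configuration, with equality only in a degenerate case that conflicts with $\alpha_i>0$ and $\sum\alpha_i=1$. Concretely this is Jensen's inequality applied to $t \mapsto t^2$ with weights $\alpha_i$.

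To make the obstruction rigorous as a statement about \emph{all} vector fields (not just a formal identity), I would exhibit the concrete system \eqref{eq:syst-W1} on $\R^2$: $f_0 = x_1^2 \,\partial_{x_2}$ and $f_1 = \partial_{x_1}$, for which $f_{W_1}(0) = 2\,\partial_{x_2} \neq 0$ while all other degree-$\leq 3$ brackets either vanish at $0$ or lie in $\vect\{\partial_{x_1}\}$. For this pair, the second component of $S_k(T)x_0 - e^{T(f_0+f_1)}x_0$ at $x_0 = 0$ is, to order $T^3$, exactly the $W_1$-coefficient times $T^3$ (the drift $f_0$ being nilpotent of a convenient kind makes the expansion terminate cleanly), and the positivity argument above shows this coefficient is bounded away from the value needed to kill the $T^3$ term. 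Hence no $(\R^+,\R)$ splitting of order $3$ exists. I expect the main obstacle to be bookkeeping: correctly extracting the degree-$3$ BCH coefficient of $W_1$ in terms of the $\alpha_i,\beta_i$ (signs and combinatorial factors from the non-symmetric ordering $f_0$-then-$f_1$ in each step), and then massaging it into a manifestly sign-definite weighted sum of squares so that Jensen applies --- this is where the structure of the chosen basis from \cite{P2} and the ``bad bracket'' philosophy of control theory do the real work.
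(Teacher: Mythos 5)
Your overall philosophy is aligned with the paper — identify $W_1=(X_1,(X_1,X_0))$ as the ``bad'' bracket, show a quadratic positivity after imposing the order~$\le2$ conditions, and use this to contradict order~$3$ — but two specific steps in your outline are wrong, and they are not just bookkeeping.

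\paragraph{The coefficient of $W_1$ alone is not sign-definite.}
You propose to isolate the coefficient $c_{W_1}$ of $f_{W_1}$ in $\Omega_3$ and show it is a positive-weighted sum of squares plus a constant, hence nonzero. This is false: after imposing the order~$1$ and order~$2$ conditions, $c_{W_1}$ can be made exactly zero with $\alpha_i>0$. The paper gives an explicit $(\R^+,\R^+)$ example, $e^{\frac14 T f_0}e^{\frac23 T f_1}e^{\frac34 T f_0}e^{\frac13 T f_1}$, for which $\zeta_{X_1}$, $\zeta_{M_1}$ and $\zeta_{W_1}$ all match their reference values. What \emph{is} sign-definite, in the basis $\Bs$ and after imposing $\zeta_{X_1}=1$, $\zeta_{M_1}=0$, is the \emph{sum}
\begin{equation*}
\zeta_{W_1}(1,u) + \zeta_{M_2}(1,u) = \tfrac12\int_0^1\bigl(U(t)-t\bigr)^2\,\dd t > 0,
\end{equation*}
where $U$ is the (staircase) primitive of $u$ — a quantity that is never zero because a piecewise-constant function cannot equal the continuous ramp $t$. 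Order~$3$ requires both $\zeta_{M_2}=0$ and $\zeta_{W_1}=0$, which together contradict this positivity; the obstruction lives on a two-dimensional face of $\Bs^3$, not on the $W_1$ coordinate alone. Your Jensen argument on $\sum_i\alpha_i P_i^2$ would, even if correctly set up, bound the wrong functional.

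\paragraph{The proposed concrete counterexample does not work.}
You invoke the system $f_0=x_1^2\,\partial_{x_2}$, $f_1=\partial_{x_1}$ to ``make the obstruction rigorous.'' But for this pair $f_{M_2}=[[f_1,f_0],f_0]\equiv 0$, so the $\zeta_{M_2}$ discrepancy is invisible, and the two-term positivity above imposes no constraint on $\zeta_{W_1}$ for this pair. Indeed, as the paper notes, the four-flow method above is an \emph{exact} $(\R^+,\R^+)$ splitting for this particular $(f_0,f_1)$. The claim that the second component of the error is $\zeta_{W_1}\,T^3$ up to $O(T^4)$ and that this cannot be killed is therefore incorrect. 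To realize the free-Lie-algebra obstruction concretely you would need a pair for which $f_1$, $f_{M_1}$, $f_{M_2}$, $f_{W_1}$ are all nonzero and $f_{M_2}$, $f_{W_1}$ are linearly independent — for instance polynomial vector fields realizing the free step-$3$ nilpotent Lie algebra on two generators (or simply invoking \cref{lem:log-T-inj} applied to generic fields, as the paper does in the proof of \cref{Thm:CN_W1}).

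With these two corrections the argument would go through and would coincide, in substance, with the paper's proof via \cref{prop:zetaW1}.
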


We claim in the sequel that the source of this order restriction is the positive-definite quadratic form governing the coordinate associated with the ``bad'' bracket $W_1 = (X_1,(X_1,X_0))$ in the free system \eqref{formalODE_A} and that this is linked with a well-known obstruction to controllability.

An $(\R^+,\R)$ splitting method corresponds to a trajectory of the control system \eqref{eq:x-f0f1} with state $x(t) \in \R^d$ and control $u:\R^+ \to \R$, where $u$ is a finite sum of Dirac masses with real amplitudes.
In control theory, the following result, due to Sussmann \cite[Proposition 6.3]{MR710995} (see also \cite[Theorem~1.10]{P2} for a modern proof using the Magnus formula and ruling out small-state STLC), is the first necessary condition for controllability.

\begin{theorem}
    \label{thm:W1-control}
    Let $f_0, f_1$ be smooth vector fields on a neighborhood of $0 \in \R^d$ such that $f_0(0)=0$.
    If the system \eqref{eq:x-f0f1} is small-state STLC then $f_{W_1}(0) \in \vect\{ f_{M_{\nu}}(0) \mid \nu \in \N \}$.
\end{theorem}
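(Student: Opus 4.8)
The plan is to prove the contrapositive by a Magnus-formula argument in the spirit of \cite[Theorem~1.10]{P2}: assuming \eqref{eq:x-f0f1} is small-state-STLC, suppose for contradiction that $f_{W_1}(0) \notin V := \vect\{ f_{M_\nu}(0) ; \nu \in \N \}$, and exhibit, for a suitable small time $T$ and radius $\varepsilon$, a target $x^*$ arbitrarily close to $0$ that cannot be reached while keeping the trajectory in $B(0,\varepsilon)$. First I would fix a linear form $\ell \in (\R^d)^*$ with $\ell|_V = 0$ and $\ell(f_{W_1}(0)) \neq 0$. A crucial structural remark is that $V$ is stable under $A := \mathrm{D}f_0(0)$: since $f_0(0) = 0$ one has $[f_0,f_{M_\nu}](0) = -A f_{M_\nu}(0)$, while $(X_0,M_\nu) = -M_{\nu+1}$ gives $[f_0,f_{M_\nu}] = -f_{M_{\nu+1}}$, so $A f_{M_\nu}(0) = f_{M_{\nu+1}}(0) \in V$. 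Consequently $V$ is invariant under the linearized flow $\mathrm{D}\Phi^{f_0}_T(0) = e^{TA}$, so $\ell$ may be replaced by $\ell \circ e^{TA}$ (still annihilating $V$, still pairing nontrivially with $f_{W_1}(0)$ for $T$ small), which absorbs the nonlinearity of the drift flow near $0$. This also explains why $W_1 = (M_0,M_1)$ is the relevant bracket: the $f_{M_\nu}(0)$ span exactly the directions $\ell$ must be blind to, and among formal brackets the only ones of length $\leq 3$ are, up to sign, the $M_\nu$ and $W_1$ itself, so $W_1$ is the unique shortest bracket not automatically forced into $V$.

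Next I would use a Chen--Fliess/Magnus expansion of the endpoint $x(T)$ of the time-dependent vector field $t \mapsto f_0 + u(t) f_1$, in a fixed Hall basis of the free Lie algebra over $\{X_0,X_1\}$, writing $\ell(x(T))$ as a series of iterated-integral functionals of $u$ against $\ell(f_b(0))$. Because $f_{X_0}(0) = f_0(0) = 0$ and every $f_{M_\nu}(0) \in V = \ker\ell$, pairing with $\ell$ annihilates the entire degree-one part. In the degree-two part I would separate the symmetric, non-Lie contributions — which reassemble into the time-one flow from $0$ of the autonomous field $G := \sum_k \tfrac{(-1)^k}{k!}\big(\int_0^T s^k u(s)\,\dd s\big) f_{M_k}$, i.e. a curve tangent to $V$ — from the genuinely bracket contributions. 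The outcome is that the first surviving term in $\ell(x(T))$ is $\ell(f_{W_1}(0))$ times a quadratic functional of $u$ which is \emph{positive semidefinite}: in the prototypical case it is $u \mapsto \int_0^T \big(\int_0^t u(s)\,\dd s\big)^2 \dd t$, and in general a positive-semidefinite variant built from the moments $\int_0^T s^k u$. Every other bracket transverse to $V$ that can occur is of the form $(M_j,M_k)$ (or longer, with $n_1 \geq 3$), hence carries strictly more factors $X_0$ than $W_1$ and contributes at strictly higher order in $T$.

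Then I would run the standard homogeneity argument: with $T$ fixed small, rescale time and control amplitude by the dilation adapted to the grading assigning one weight unit per letter, so that each bracket $b$ contributes to $\ell(x(T))$ a term of order $\varepsilon^{w(b)}$ times a fixed functional of the rescaled control, with $w(W_1)$ strictly below $w(b)$ for every competing transverse bracket. For $\varepsilon$ small this yields $\ell(x(T)) = c\,\ell(f_{W_1}(0))\,Q_\varepsilon(u) + o(\varepsilon^{w(W_1)})$ with $c > 0$ and $Q_\varepsilon(u) \geq 0$ for all admissible controls. Since the leading term has a fixed sign, $\ell(x(T))$ cannot take values on whichever side of $0$ is forbidden by the sign of $\ell(f_{W_1}(0))$; but small-state-STLC would force $x(T) = x^*$ for targets $x^*$ arbitrarily close to $0$ on both sides of $\{\ell = 0\}$, a contradiction.

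The main obstacle is making the last step uniform — that is, proving that the positive-semidefinite degree-two functional genuinely dominates, simultaneously over all admissible controls, both the higher-degree terms and the cross terms coupling the "good" directions $f_{M_\nu}(0)$ with the bad one $f_{W_1}(0)$. This is exactly the coercivity-plus-remainder estimate carried out in \cite{P2}: it rests on a careful, basis-dependent bookkeeping of the iterated-integral functionals, their homogeneity under the dilation, and interpolation-type inequalities bounding high-order iterated integrals of $u$ by low-order ones on the scale imposed by the constraint $x([0,T]) \subset B(0,\varepsilon)$. By contrast, the algebraic steps — invariance of $V$ under $A$, vanishing of the degree-one $\ell$-coordinate, and identification of the degree-two $\ell$-coordinate with a positive-semidefinite quadratic form — are essentially formal. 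Alternatively, one may simply invoke Sussmann's Proposition 6.3 in \cite{MR710995}, of which this is a reformulation.
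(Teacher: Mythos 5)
The paper does not prove \cref{thm:W1-control} itself but attributes it to Sussmann's Proposition~6.3 in \cite{MR710995}, pointing to \cite[Theorem~1.10]{P2} for a modern proof "using the Magnus formula and denying small-state-STLC." Your proposal is exactly a sketch of that [P2] argument (fix $\ell$ annihilating $V:=\vect\{f_{M_\nu}(0)\}$ but not $f_{W_1}(0)$; observe that $Af_{M_\nu}(0)=f_{M_{\nu+1}}(0)$ so $V$ is $A$-stable; show the first nontrivial contribution to $\ell(x(T))$ is a positive-semidefinite quadratic functional of $u$ multiplied by $\ell(f_{W_1}(0))$, and control the remainder by homogeneity and interpolation), with the key coercivity-plus-remainder estimate correctly identified as the hard step and explicitly deferred to [P2] (or to invoking Sussmann outright), so it takes essentially the same approach the paper references. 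Minor quibble: the $(-1)^k$ in your formula for $G$ is a sign-convention artifact from conjugating by $e^{-sf_0}$ vs.\ $e^{sf_0}$ (one gets $\mathrm{Ad}_{e^{-sf_0}}(f_1)=\sum_k \frac{s^k}{k!}f_{M_k}$, without alternation, using $f_{M_k}=(-1)^k\ad_{f_0}^k(f_1)$), but this has no bearing on the argument.
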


An analogue of \cref{thm:W1-control} for splitting methods is the following result.

\begin{theorem}
    \label{Thm:CN_W1}
    Let $f_0, f_1$ be smooth vector fields on $\R^d$.
    If there exists an $(\R^+,\R)$ splitting method of order $3$ relative to $(f_0, f_1)$, then $f_{W_1}$ and $f_{M_2}$ are linearly dependent.
\end{theorem}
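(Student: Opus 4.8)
The plan is to expand both sides of the order-3 splitting identity \eqref{Def_SM:estim} up to order $T^3$ using the Baker--Campbell--Hausdorff / Magnus formalism, and then extract the constraint imposed by the order-3 conditions on the coordinate attached to the "bad" bracket $W_1 = (X_1,(X_1,X_0))$. Concretely, I would work in the free Lie algebra $\mathcal{L}(X)$ on $X = \{X_0, X_1\}$, writing the logarithm of the composition $e^{\alpha_1 T f_0} e^{\beta_1 T f_1} \cdots e^{\alpha_k T f_0} e^{\beta_k T f_1}$ as a formal series $\sum_{n \geq 1} T^n \Xi_n$, where each $\Xi_n$ is a Lie polynomial in $X_0, X_1$ whose coefficients are polynomials in the $\alpha_i, \beta_i$. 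The order-$N$ condition is precisely that $\Xi_1 = X_0 + X_1$ and $\Xi_n = 0$ for $2 \leq n \leq N$, as identities in $\mathcal{L}(X)$ (this is the standard reduction; it is legitimate because \eqref{Def_SM:estim} is required to hold for \emph{all} $f_0, f_1$, hence for the "universal" evaluation on the free Lie algebra — see \cref{subsec:alg}).

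The key computation is the degree-3 part. In a Hall--type basis of $\mathcal{L}^3(X)$ — for instance using the brackets $(X_1,(X_1,X_0)) = W_1$ and $(X_0,(X_1,X_0)) = (X_0,M_1) = -M_2$ — one writes $\Xi_3 = \lambda\, W_1 + \mu\, M_2$ for scalar functions $\lambda, \mu$ of $(\alpha,\beta)$. The order-3 conditions force $\lambda = 0$ and $\mu = 0$ \emph{simultaneously}; but the crucial structural fact, which is the content of \cref{thm:max-2} and the reason $(\R^+,\R)$ methods stop at order 2, is that the coefficient $\lambda$ of $W_1$ is a \emph{positive-definite} quadratic-type expression in the relevant variables — essentially a sum of the form $\sum_i \alpha_i' \beta_i'^2$ or a nonnegative combination of squares weighted by the $\alpha$'s — which cannot vanish unless a suitable degeneracy occurs. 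So rather than deriving the contradiction for genuine order-3 existence (that's \cref{thm:max-2}), here I exploit the \emph{relative} hypothesis: if the method is only of order 3 relative to a particular pair $(f_0, f_1)$, the cancellation $\Xi_3 = 0$ need not hold in the free Lie algebra, only after evaluation, so $\lambda f_{W_1} + \mu f_{M_2} = 0$ in $\R^d$. Since $\lambda \neq 0$ (by the positive-definiteness, using that the method is nontrivial — there is always an $e^{\alpha f_0}$ with $\alpha > 0$ between $f_1$-flows, and the structure of the degree-3 coefficient forces $\lambda > 0$ strictly whenever any $\beta_i \neq 0$, which must be the case for a method of positive order), we conclude $f_{W_1} = -(\mu/\lambda) f_{M_2}$, i.e.\ $f_{W_1}$ and $f_{M_2}$ are linearly dependent.

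I would organize the write-up as: (i) recall the reduction of the order condition to vanishing of the $\Xi_n$, citing the algebraic setup; (ii) compute $\Xi_1, \Xi_2$ and note the order-1 and order-2 conditions (these fix $\sum \alpha_i = \sum \beta_i = 1$ and one symmetry-type relation, and in particular guarantee not all $\beta_i$ vanish); (iii) compute $\Xi_3$ in the chosen basis of $\mathcal{L}^3(X)$, isolating the coefficient $\lambda$ of $W_1$ and showing $\lambda$ has a fixed sign — this is where I'd reuse the computation underlying \cref{thm:max-2}; (iv) invoke the relative-order hypothesis to get $\lambda f_{W_1}(x_0) + \mu f_{M_2}(x_0) = 0$ for all $x_0$, hence as vector fields, and divide by $\lambda \neq 0$.

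The main obstacle I anticipate is step (iii): pinning down that the $W_1$-coefficient $\lambda$ is \emph{strictly} nonzero for any method of positive order, not merely nonnegative. One must rule out the degenerate possibility that the order-1 and order-2 constraints conspire to make $\lambda = 0$ while still having a nontrivial method. This requires knowing the exact form of $\lambda$ as a function of $(\alpha,\beta)$ — something like $\lambda = \tfrac12\big(\sum_{i} (\sum_{j \le i}\alpha_j - \text{something})\,\beta_i^2 + \dots\big)$ after using the order-2 relations — and checking it is a positive-definite form on the affine subspace cut out by the lower-order conditions. This is exactly the algebraic heart of the classical "order barrier 2" proof, so I would either extract it cleanly from that argument or redo the degree-$\le 3$ BCH bookkeeping explicitly, which is a finite but somewhat delicate computation best done with a Hall basis and the recursive BCH formula.
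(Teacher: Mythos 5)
Your overall plan---expand via the Magnus/BCH formalism, reduce the degree-$3$ condition to a two-term relation in the basis $\{W_1, M_2\}$, and then argue that a positivity/coercivity property of the $W_1$-coordinate forces linear dependence---is the same as the paper's. The paper does this by applying \cref{lem:log-T-inj} to $e^{Z_3(1,Tf,u)}-e^{T(f_0+f_1)}=O(T^4)$ to extract $\zeta_{X_1}=1$, $\zeta_{M_1}=0$, and $\zeta_{M_2}f_{M_2}+\zeta_{W_1}f_{W_1}=0$, and then invokes the key positivity statement \cref{prop:zetaW1}.

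However, there is a genuine error in step (iii). You claim that the coefficient $\lambda$ of $W_1$ (in the paper's notation, $\zeta_{W_1}(1,u)$) is \emph{strictly positive} for any method satisfying the order-$1$ and order-$2$ constraints, and you then divide by $\lambda$ to write $f_{W_1}=-(\mu/\lambda)f_{M_2}$. This is false. The correct positivity statement, \cref{prop:zetaW1}, is that the \emph{sum} $\zeta_{W_1}(1,u)+\zeta_{M_2}(1,u)$ is strictly positive (it equals $\tfrac12\int_0^1 V^2$ where $V$ is the primitive of $u-1$); $\zeta_{W_1}$ alone can vanish. Indeed, the paper's remark at the end of \cref{sec:W1} exhibits an explicit counterexample: for the $(\R^+,\R^+)$ method $e^{\frac14 T f_0}e^{\frac23 T f_1}e^{\frac34 T f_0}e^{\frac13 T f_1}$, one has $\zeta_{X_1}(1,u)=1$, $\zeta_{M_1}(1,u)=0$, and $\zeta_{W_1}(1,u)=0$, with $\zeta_{M_2}(1,u)=\tfrac{1}{48}\neq 0$. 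So your asserted strict positivity of $\lambda$ is disproved by this example, and the step ``since $\lambda\neq 0$, divide by $\lambda$'' is not justified.

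The conclusion can still be reached, but one must phrase the endgame as in the paper: from $\zeta_{W_1}+\zeta_{M_2}>0$ the two coefficients cannot both vanish, and the vanishing relation $\zeta_{W_1}f_{W_1}+\zeta_{M_2}f_{M_2}=0$ with a nonzero coefficient vector is exactly what ``linearly dependent'' means (in particular, if $\zeta_{W_1}=0$ and $\zeta_{M_2}\neq 0$, one concludes $f_{M_2}=0$, which still gives linear dependence). You should also handle the degenerate cases $f_1=0$ or $f_{M_1}=0$ at the outset, since the equations from \cref{lem:log-T-inj} read $(\zeta_{X_1}-1)f_1=0$ and $\zeta_{M_1}f_{M_1}=0$, and the hypotheses $\zeta_{X_1}=1$, $\zeta_{M_1}=0$ needed to apply \cref{prop:zetaW1} only follow when $f_1\neq 0$ and $f_{M_1}\neq 0$; in the degenerate cases $f_{W_1}=0$ directly, so the conclusion is trivial.
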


The difference between the compensation conditions in \cref{thm:W1-control,Thm:CN_W1} comes from the scaling. 
In small-state STLC, the control $u$ is small: the displacement along $f_{W_1}(0)$ is quadratic in $u$, whereas the displacements along the $f_{M_\nu}(0)$ are linear, so any of them may compensate for the drift generated by $W_1$. By contrast, in \cref{def:SM}, the coefficients of a splitting method are independent of the final time $T$, so compensation can only occur between brackets of the same total length. Since the only other bracket of length $3$ is $M_2=((X_1,X_0),X_0)$, the compensation condition only involves $f_{W_1}$ and $f_{M_2}$. If the coefficients were allowed to depend polynomially on~$T$, the condition could involve $f_{W_1}$ and $\{ f_{M_\nu} \mid \nu \leq 2 \}$.

Another indication that $W_1$ is the source of \cref{thm:max-2} is that this order restriction disappears as soon as one can compute the flow of $f_{W_1}$ (in fact, the flow of $-f_{W_1}$ already suffices). 

\begin{theorem}
    \label{thm:main-exists-4}
    There exists an $(\R^+,\R)$ splitting method of order 4 involving $X_0$ and $X_1$, $W_1$.
\end{theorem}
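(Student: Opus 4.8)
The plan is to build the order-$4$ method in two stages: first construct a \emph{symmetric} $(\R^+,\R)$ base method of order $2$ whose third-order defect points only along the ``bad'' bracket $W_1$, and then cancel that defect by inserting a flow proportional to $f_{W_1}$ at each end, symmetrically. Symmetry is the trick that makes the fourth-order conditions disappear for free, so that the only real work is at third order.

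I would first reduce, as for the other results of this section, to a graded identity in the free Lie algebra over $X=\{X_0,X_1\}$: with each generator of length $1$, the bracket $W_1$ is homogeneous of length $3$, so a flow $e^{\beta T^{3}f_{W_1}}$ contributes $\beta W_1$ at length $3$ and only brackets of length $\geq 4$ beyond that (through Baker--Campbell--Hausdorff cross-terms). It then suffices to exhibit a finite product $P_T$ of exponentials of $\alpha Tf_0$ ($\alpha>0$), of $\beta Tf_1$ ($\beta\in\R$), and of $\gamma T^{3}f_{W_1}$ ($\gamma\in\R$) with $\log P_T = T(f_0+f_1) + O(T^5)$. If the list of factors of $P_T$ is palindromic in type and written coefficient, then $P_{-T}=P_T^{-1}$, so $\log P_T$ is odd in $T$; hence its $T^2$ and $T^4$ parts vanish identically, the $T^2$-order condition is free, and one is left only with cancelling the length-$3$ part of $\log P_T$, i.e.\ its coordinates along $W_1$ and along $M_2$.

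Next I would start from a palindromic family of $(\R^+,\R)$ flows with enough free coefficients, e.g.\ $\Phi_T = e^{\alpha_1 Tf_0}e^{\beta_1 Tf_1}e^{\alpha_2 Tf_0}e^{\beta_2 Tf_1}e^{\alpha_3 Tf_0}e^{\beta_2 Tf_1}e^{\alpha_2 Tf_0}e^{\beta_1 Tf_1}e^{\alpha_1 Tf_0}$ with $2\alpha_1+2\alpha_2+\alpha_3=1$ and $2\beta_1+2\beta_2=1$. Being symmetric, $\Phi_T$ has order $2$ for every value of the remaining parameters, and its length-$3$ defect equals $a\,f_{W_1}+b\,f_{M_2}$ for explicit polynomials $a,b$ in the parameters. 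I would choose the parameters, \emph{with $\alpha_1,\alpha_2,\alpha_3>0$}, so that $b=0$; then $a\neq 0$ automatically, for otherwise $\Phi_T$ would be an $(\R^+,\R)$ method of order $3$, contradicting \cref{thm:max-2}. Setting $\Phi'_T := e^{-\frac a2 T^{3}f_{W_1}}\,\Phi_T\,e^{-\frac a2 T^{3}f_{W_1}}$ gives again a palindromic, hence symmetric, product, so $\log\Phi'_T$ is odd in $T$ with vanishing $T^2$ and $T^4$ parts, while its $T^3$ part is $a\,f_{W_1}+b\,f_{M_2}+2\bigl(-\tfrac a2\bigr)f_{W_1}=b\,f_{M_2}=0$ --- the two inserted flows each contribute $-\tfrac a2 f_{W_1}$ at length $3$ and only longer brackets otherwise. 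Thus $\log\Phi'_T = T(f_0+f_1)+O(T^5)$, i.e.\ $\Phi'_T$ is an $(\R^+,\R)$ splitting method of order $4$ involving $X_0$ and $X_1,W_1$ (moving the two commutator flows just inside the outermost flows of $f_0$ makes it fit the normalization of the definition).

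The hard part will be the choice in the previous paragraph: certifying that the $M_2$-coordinate of the third-order defect can be cancelled while keeping every $\alpha_i$ positive. This sits exactly at the edge of the order barrier \cref{thm:max-2}, which forbids cancelling the $W_1$-coordinate simultaneously; the content is the classical fact behind that barrier, namely that among the two length-$3$ obstructions for $(\R^+,\R)$ methods only the one along $W_1$ is a genuine sign obstruction, the one along $M_2$ being freely adjustable. I would settle it by a finite computation --- a positive root of the polynomial $b$ --- or by extracting it from the proof of \cref{thm:max-2} (compare \cref{Thm:CN_W1}). Everything else is Baker--Campbell--Hausdorff bookkeeping; in particular, thanks to the symmetric ansatz the length-$4$ conditions are never solved explicitly, so the single length-$4$ ``bad'' bracket $[X_0,W_1]$ (which the $f_{W_1}$-flow would otherwise be needed to compensate) causes no trouble.
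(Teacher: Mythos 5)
Your argument is correct, but it takes a genuinely different route from the paper. The paper proves \cref{thm:main-exists-4} (as part of \cref{thm:real-exists-2468}) non-constructively: it identifies the splitting method with a trajectory of the finite-dimensional polynomial control system $\dot{x}_b = \partial_t \xi_b$ on $\R^7$ for $b\in\Bs^4\setminus\{X_0\}$, with two scalar controls along $f_1$ and $f_{W_1}$, and then verifies Sussmann's $S(\theta)$ sufficient condition for small-time local controllability (\cref{p:stheta}) with weights $\omega_0=\omega_1=1$, $\omega_2 = 3-\varepsilon$. You instead give a constructive argument in the style of the classical splitting literature: impose a palindromic ansatz so that all even-order conditions hold for free (since $\Phi_{-T}=\Phi_T^{-1}$ forces $\log\Phi_T$ to be odd in $T$), tune the free parameters to zero out the $f_{M_2}$ component of the leading $T^3$ defect, and then absorb the remaining $f_{W_1}$ component via a symmetrically inserted commutator flow, as in Takahashi--Imada/Chin. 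The one step you leave as ``a finite computation'' — that the $M_2$-coordinate of the third-order defect can be killed with all $\alpha_i>0$ — is indeed settled by an elementary calculation: the palindromic five-flow scheme $e^{cTf_1}e^{\frac{T}{2}f_0}e^{(1-2c)Tf_1}e^{\frac{T}{2}f_0}e^{cTf_1}$ has $\xi_{M_2}(1,u)=\frac{c}{4}+\frac18$, which equals the target value $\frac16$ at $c=\frac16$, with both $\alpha$-coefficients equal to $\frac12>0$ (and the $\beta$'s even positive); your observation that $a\neq 0$ then follows from \cref{thm:max-2} closes the argument. The trade-off between the two routes: yours is elementary, explicit, and yields a concrete method; the paper's is abstract and gives only existence, but it scales uniformly to the order-$6$ and order-$8$ cases of \cref{thm:real-exists-2468} with no additional ingenuity, whereas the symmetric-composition route would have to redo the ``hard part'' at every order with rapidly growing numbers of order conditions.
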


This observation already appears in \cite[Section 5]{Blanes05otn}, where an $(\R^+,\R^+)$ splitting method involving $X_0$, $X_1$, and $-W_1$ is constructed. 
Our proof relies on control theory and yields only an $(\R^+,\R)$ scheme, but it extends readily to higher-order schemes (see below). 
As in \cref{thm:main-complex-split} above and \cref{Thm:Deg,thm:max-N} below, our positive results are abstract existence statements: the proofs provide a way to approximate $(\alpha,\beta)$, but not to compute it exactly.
\subsubsection{The next obstructions to controllability and to $(\R^+,\R)$ splitting methods}

The obstruction $W_1$ is far from being the only one.
In fact, even in situations where the first necessary conditions $f_{W_1}(0) \in \vect\{f_{M_\nu}(0) \mid \nu\in \N\}$ (for control theory) or $f_{W_1}$ and $f_{M_2}$ are linearly dependent (for splitting) hold, or one incorporates the flow of $f_{W_1}$, other obstructions occur. 
The next obstruction comes from $W_2 = \ad^2_{(X_1,X_0)}(X_0)$ (see \cref{def:Mnu-Wj}).

In the spirit of \cite{JDE} for control theory, we exhibit a list of obstructions to the construction of $(\R^+,\R)$ splitting methods, which are associated with quadratic quantities in the control (or coefficients along $f_1$). 
We prove the following results.

\begin{theorem}
    \label{Thm:Deg}
    Let $N \in \N^*$ and $f_0, f_1$ be smooth vector fields on $\R^d$ such that $f_{W_j}=0$ for $j = 1, \dots, N-1$. If there exists an $(\R^+,\R)$ splitting method of order $(2N+1)$ relative to $(f_0, f_1)$ then $f_{M_{2N}}$ and $f_{W_N}$ are linearly dependent.
\end{theorem}

\begin{theorem}
    \label{thm:max-N}
    For any $N \in \N^*$, there exists an $(\R^+,\R)$ splitting method of order $2N$ involving $X_0$ and $X_1, W_1, \dots, W_{N-1}$, and this is the maximal possible order of such methods.
\end{theorem}

\cref{thm:max-N} was conjectured in 2004 in \cite[Section V]{Chin2004}, which contains a proof for $N = 2$ and $N = 3$.
The case $N = 2$ was also investigated numerically in \cite[Section~5]{Blanes05otn} and theoretically in \cite{AuzingerHofstatterKoch2019}.

The philosophy is iterative. 
First, $W_1$ obstructs order $3$ for methods involving only $X_0$ and $X_1$; it is therefore natural to add $W_1$, or equivalently to restrict to vector fields such that $f_{W_1}=0$. 
Second, $W_2$ obstructs order $5$ for methods involving $X_0$, $X_1$, and $W_1$; one is then naturally led to add $W_2$, or to assume $f_{W_1}=f_{W_2}=0$.
Iterating this principle yields a purely quadratic obstruction theory for splitting methods. 
Other approaches are of course possible, for instance by adjoining other elements of $\cL(X)$ or by considering different degeneracies, and the algebraic formalism developed here can be used to test them.

The control-theoretic counterpart to \cref{thm:max-N} is the following new result.

\begin{theorem}
    \label{thm:Control_2N}
    Let $N \in \N^*$ and $\cE=\{X_1,W_1,\dots,W_{N-1}\}$.
    For any smooth vector fields $f_0, f_1$ on a neighborhood of $0$ in $\R^d$ such that $f_0(0)=0$ and $\cL_{2N}(f)(0) = \R^d$, the system
    \begin{equation}
        \label{EDOf0fA}
        \dot{x}(t)= f_0(x(t)) + \sum_{b \in \cE} u_b(t) f_b(x(t))
    \end{equation}
    is small-state STLC.
\end{theorem}

\subsection{Open problems}

We conclude with several open questions suggested by the results of this work.

\begin{enumerate}
    \item \emph{Do there exist $(\C^+,\C^+)$ splitting methods of arbitrary order?}
    \cref{thm:main-complex-split} yields $(\R^+,\C)$ splitting methods of arbitrary order, which is a different problem.

    \item \emph{For which sets of enabled brackets $\cE \subset \Br(X) \setminus \{ X_0 \}$ does there exist splitting methods of arbitrary order?}
    By \cref{thm:max-N}, $\cE = \{ X_1 \} \cup \{ W_j \mid j \geq 1 \}$ works, but other sets could too.

    \item \emph{What are the sharp degeneracy conditions on a pair of smooth vector fields $f_0,f_1$ on $\R^d$ that guarantee the existence of a splitting method of order $N$ for this pair?}
    \cref{Thm:Deg} only yields a partial answer (for instance for $N = 2$, we assume that $f_{W_1} = 0$ to obtain information on $f_{W_2}$).
\end{enumerate}

\subsection{Structure of the article}

In \cref{sec:Prerequis}, we recall prerequisites on control-affine systems.

In \cref{sec:Magnus-system}, we introduce the formal differential equation associated with system \eqref{EDOf0fA} and a Magnus-type representation formula for its solution, involving formal Lie series.
We then study the differential equation solved by truncations of this Lie series, that we call the Magnus system.

In \cref{Sec:Control_split_Magnus}, we clarify the fundamental link between the controllability of the Magnus system, the existence of splitting methods of order $N$, and the controllability of the system \eqref{EDOf0fA}. 
This fundamental link is used to prove all the results of this article.

We prove \cref{thm:main-complex-split,thm:main-complex-control} in \cref{Sec:Complex}. 
We prove \cref{thm:Control_2N} and the existence part of \cref{thm:max-N} in \cref{sec:high_order_comm}, and the upper bound of \cref{thm:max-N} in \cref{sec:max-order}.
We prove \cref{Thm:Deg} in \cref{sec:high_order_deg}.

In \cref{sec:error-proofs}, we prove new error estimates for the Chen--Fliess and Magnus formulas (stated in \cref{subsec:error_Magnus_syst} and used throughout the article).

In \cref{s:appendix}, we gather proofs of several classical control results used in the article. 
These results may be skipped on a first reading, to get to the heart of the article more quickly.

\section{Prerequisites about control-affine systems} \label{sec:Prerequis}

In this section $\K=\R$ or $\C$ denotes the base field.
Given $m \in \N^*$ and smooth vector fields $f_0, f_1, \dotsc, f_m$ on $\K^d$ with $f_0(0) = 0$, we consider the control-affine system
\begin{equation}
    \label{eq:syst-f0-f1-fm}
    \dot{x}(t)= f_0(x(t)) + u_1(t) f_1(x(t)) + \dotsb + u_m(t) f_m(x(t)).
\end{equation}
We assume in this section that the vector fields $f_0, f_1, \dotsc, f_m$ are all complete.
In particular, for any $T > 0$, $u \in L^1((0,T);\K^m)$ and $x_0 \in \K^d$, it is standard that system \eqref{eq:syst-f0-f1-fm} admits a unique Carathéodory solution on $[0,T]$ with initial condition $x_0$.
We denote it by $x(t;f,u,x_0)$.

In \cref{s:impulsive}, we define a set of impulsive controls, and the associated solutions. 
In \cref{subsec:Ex=App}, we state the equivalence between exact and approximate local controllability of system \eqref{eq:syst-f0-f1-fm}.
In \cref{subsec:extended_syst}, we present an ``extension'' argument, which is often used to prove its controllability.

\subsection{Impulsive controls and associated solutions}
\label{s:impulsive}

To relate controllability to splitting, we will use the following class of impulsive controls.

\begin{definition}[Set $\fD$ of impulsive controls]
    Given $T > 0$ and $m \in \N^*$, we denote by~$\fD$ the set of \emph{impulsive controls} on $[0,T]$ with values in $\K^m$.
    Each $u \in \fD$ is a finite ordered list $(\tau_k, \beta_k, i_k)_{k \in \intset{1,n}}$ where $0 \leq \tau_1 \leq \dotsb \leq \tau_n \leq T$ are the times of the pulses, $\beta_k \in \K$ their amplitudes, and $1 \leq i_k \leq m$ the indices of the active control channels. 
\end{definition}

The notation $\fD$ does not record the choice of $T$, $\K$ or $m$, but there will be no ambiguity when it is used in the article.
When the impulse times are pairwise distinct, an impulsive control may be identified with the vector-valued measure $\beta_1 e_{i_1} \delta_{\tau_1} + \dotsb + \beta_n e_{i_n} \delta_{\tau_n}$.
If several impulses occur at the same time, this identification is no longer sufficient, because it does not record their order.

\begin{definition}[Solution associated with an impulsive control]
    \label{def:impulsive-solution}
    Let $T > 0$, $u \in \fD$ and $x_0 \in \K^d$.
    We define an associated \emph{càdlàg} (right-continuous, left limits at each point) \emph{impulsive trajectory} to~\eqref{eq:syst-f0-f1-fm} as follows.
    Set $y_0 := x_0$, $\tau_0 := 0$ and define inductively, for $k = 1, \dotsc, n$,
    \begin{equation}
        y_k^- := e^{(\tau_k-\tau_{k-1})f_0} y_{k-1},
        \qquad
        y_k := e^{\beta_k f_{i_k}} y_k^-.
    \end{equation}
    We then set, for $t \in [0,T]$,
    \begin{equation}
        x(t;f,u,x_0):=e^{(t-\tau_{k_t})f_0} y_{k_t},
        \qquad
        k_t := \max\{ k \in \intset{0,n} \mid \tau_k \leq t\}.
    \end{equation}
\end{definition}

The state $x(t)$ contains the contribution of all the ordered impulses up to time $t$.
In particular, when $0 = \tau_1 < \tau_2$, one may have $x(0) = e^{\beta_1 f_{i_1}}(x_0) \neq x_0$.

\begin{example}
    The Lie--Trotter and Strang splitting of \eqref{eq:Lie-Trotter} and \eqref{eq:Strang} can be seen as associated with the impulsive controls $u = ((0, T, 1))$ and $u = ((T/2, T, 1))$.

    For a more elaborate example, if $u = ((0, 1, 1), (1, 2, 1), (3, -7, 2), (3, 5, 3))$, then
    \begin{equation}
        x(t;f,u,x_0)= \left\lbrace
        \begin{array}{ll}
            e^{t f_0} e^{f_1} x_0 \quad & \text{ if } t \in [0,1), \\
            e^{(t-1) f_0} e^{2 f_1} e^{f_0} e^{f_1} x_0 & \text{ if } t \in [1,3), \\
            e^{(t-3) f_0} e^{5 f_3} e^{-7 f_2} e^{2 f_0} e^{2 f_1} e^{f_0}e^{f_1} x_0
            & \text{ if } t \in [3,\infty).
        \end{array}
        \right.
    \end{equation}
    
\end{example}

\begin{proposition}
    \label{prop:impulsive-reg}
    Let $T > 0$, $u \in \fD$ and $x_0 \in \K^d$.
    There exists a sequence $u^\varepsilon \in L^1((0,T);\K^m)$ of piecewise-constant controls, bounded in $L^1$, such that:
    \begin{equation}
        \forall t \in (0,T], \quad 
        \lim_{\varepsilon \to 0} x(t;f,u^\varepsilon,x_0) = x(t;f,u,x_0).
    \end{equation}
\end{proposition}

\begin{proof}
    Let $u = (\tau_k,\beta_k,i_k)_{k\in\intset{1,n}} \in \fD$.
    For $\varepsilon>0$, define
    \begin{equation}
        I_k^\varepsilon :=
        \begin{cases}
            [(k-1)\varepsilon,k\varepsilon],
            &\text{if } \tau_k=0,\\[1ex]
            [\,\tau_k-(n-k+1)\varepsilon,\;\tau_k-(n-k)\varepsilon\,],
            &\text{if } \tau_k>0.
        \end{cases}
    \end{equation}
    Since the sequence $(\tau_k)_{k\in\intset{1,n}}$ is nondecreasing, one checks that for $\varepsilon>0$ small enough the intervals $I_k^\varepsilon$ are pairwise disjoint and ordered in the same way as the indices $k$. 
    Moreover, if $\tau_k=0$ then $I_k^\varepsilon$ lies to the right of $0$, while if $\tau_k>0$ then $I_k^\varepsilon$ lies to the left of~$\tau_k$.

    We define a piecewise-constant $u^\varepsilon \in L^1((0,T);\K^m)$ by
    \begin{equation}
        u^\varepsilon(t) := \sum_{k=1}^n \frac{\beta_k}{\varepsilon} \mathbf{1}_{I^\varepsilon_k}(t) e_{i_k}.
    \end{equation}
    Then $\|u^\varepsilon\|_{L^1(0,T)} \leq \sum_{k=1}^n |\beta_k|$ and, for each $t \in (0,T]$, $x(t;f,u^\varepsilon,x_0) \to x(t;f,u,x_0)$.
\end{proof}

\subsection{Exact and approximate STLC} 
\label{subsec:Ex=App}

For $T > 0$ and either $\cU = \fD$ or $\cU = L^1$, define the set of reachable states for system \eqref{eq:syst-f0-f1-fm} as
\begin{equation}
    R_T(\cU) := \{ x(T;f,u,0) \mid u \in \cU \} \subset \K^d.
\end{equation}
Since $f_0(0) = 0$, one checks that $R_T(\cU)$ is non-decreasing with respect to $T$.

In the sequel, we will use exact and approximate small-time local controllability interchangeably, since they are equivalent in our setting (see \cref{s:exact=app-proof} for a self-contained proof).

\begin{proposition}[STLC]
    \label{p:approx=exact}
    Assume that $(\Lie \{ f_0, f_1, \dotsc, f_m \})(0) = \K^d$.
    Let $\cU = \fD$ or $\cU = L^1$.
    The following notions are equivalent:
    \begin{flalign*}
        & \quad - \quad \text{small-time local exact $\cU$-controllability:} 
        && \forall T > 0, \quad 0 \in \operatorname{int} R_T(\cU), &\\
        & \quad - \quad \text{small-time local approximate $\cU$-controllability:} 
        && \forall T > 0, \quad 0 \in \operatorname{int} \overline{R_T(\cU)}. &
    \end{flalign*}
\end{proposition}

Moreover, approximate $\fD$-STLC and $L^1$-STLC are equivalent (so their exact counterparts too).

\begin{proposition}
    \label{p:D=L1}
    For any $T > 0$, one has $\overline{R_T(\fD)} = \overline{R_T(L^1)}$.
\end{proposition}

\begin{proof}
    On the one hand, given $u \in \fD$, by \cref{prop:impulsive-reg}, there exists a sequence of regularizations $u^\varepsilon \in L^1((0,T);\K^m)$ for which $x(T;f,u^\varepsilon,0) \to x(T;f,u,0)$.

    On the other hand, given $u \in L^1$, by density we first approximate it by a sequence of step functions.
    Within each step of duration $\tau > 0$, control value $\alpha \in \K^m$, and starting from an initial point $x_0$, the Lie--Trotter product formula (see \cref{lem:LT}) yields
    \begin{equation}
        e^{\tau (f_0 + \alpha_1 f_1 + \dotsb + \alpha_m f_m)}(x_0) = \lim_{N \to +\infty} \left( e^{\frac{\tau}{N} f_0} e^{\frac{\tau \alpha_1}{N} f_1} \dotsb e^{\frac{\tau \alpha_m}{N} f_m} \right)^N (x_0).
    \end{equation}
    Hence we can construct a sequence $u^N \in \fD$ for which $x(T;f,u^N,0) \to x(T;f,u,0)$.
\end{proof}

\subsection{Extension method}
\label{subsec:extended_syst}

A classical strategy to establish the STLC of a system is to apply ``extension'' or ``saturation'' operations, to enlarge the set $\{f_1,\dots,f_m\}$, until the following result can be applied.

\begin{lemma}
    \label{Lem_gen}
    If $\vect \left\{f_1(0),\dots,f_m(0) \right\}=\K^d$, then system \eqref{eq:syst-f0-f1-fm} is STLC.
\end{lemma}

\begin{proof}
    One may assume that the vectors $f_1(0),\dotsc,f_d(0)$ are linearly independent. 
    By the inverse mapping theorem, the map $F : \alpha \in \K^d \mapsto e^{\alpha_d f_d} \dots e^{\alpha_1 f_1}(0) \in \K^d$ is a local diffeomorphism of $\K^d$ around $0$. 
    For $x^* \in \K^d$ small enough, set $\alpha=F^{-1}(x^*)$ and define
    \begin{equation}
        u = ((T, \alpha_k, k))_{k \in \intset{1,d}}
        \quad \text{so} \quad
        x(T;f,u,0) = e^{\alpha_d f_d} \dotsb e^{\alpha_1 f_1} e^{T f_0}(0) = F(\alpha) = x^*.
    \end{equation}
    Hence $0 \in \operatorname{int} \overline{R_T(\fD)}$.
    By \cref{p:approx=exact,p:D=L1}, system \eqref{eq:syst-f0-f1-fm} is STLC.
\end{proof}

An example of legitimate extension is given by the following statement.

\begin{proposition}
    \label{p:enrich-f1f2}
    For $m \geq 2$, system \eqref{eq:syst-f0-f1-fm} is
    small-time locally approximately $\fD$-controllable iff the extended system
    \begin{equation}
        \label{eq:syst-extended-f1f2}
        \dot{x} = f_0(x) + u_1 f_1(x) + \dotsb + u_m f_m(x) + u_{m+1} [f_1,f_2](x)
    \end{equation}
    is small-time locally approximately $\fD$-controllable.
\end{proposition}

\begin{proof}
    The $\fD$-controlled trajectories of the system \eqref{eq:syst-f0-f1-fm} involve the composition of a finite number of flows of the form $e^{a_j f_j}$ with $a_j \in \K$ (and $a_0 \in \R^+$). 
    The $\fD$-controlled trajectories of the extended system \eqref{eq:syst-extended-f1f2} involve the composition of a finite number of flows of the form $e^{a_j f_j}$ and $e^{\alpha [f_1,f_2]}$ with $a_j, \alpha \in \K$ (and $a_0 \in \R^+$). 
    Thus, it suffices to prove that one can approximate flows of the form $e^{\alpha[f_1,f_2]}$ by $\fD$-controlled trajectories of the initial system \eqref{eq:syst-f0-f1-fm}. 

    Uniformly within any compact set of $\K^d$, as $t \to 0$, one has
    \begin{equation}
        e^{- \alpha \sqrt{t} f_1} e^{-\sqrt{t} f_2} e^{+ \alpha \sqrt{t} f_1} e^{+\sqrt{t} f_2}(y) = y + \alpha t [f_1,f_2](y) + O(t \sqrt{t}).
    \end{equation}
    Hence (see \cref{lem:abstract-product}), for any $x_0 \in \K^d$, as $N \to \infty$, 
    \begin{equation}
        \left( e^{- \frac{\alpha}{N} f_1}
        e^{- \frac{1}{N} f_2}
        e^{+ \frac{\alpha}{N} f_1}
        e^{+ \frac{1}{N} f_2}
        \right)^{N^2}(x_0) \to e^{\alpha [f_1,f_2]}(x_0),
    \end{equation}
    which proves that we can approximate the flow $e^{\alpha [f_1,f_2]}$ by $\fD$-controlled trajectories of \eqref{eq:syst-f0-f1-fm}.
\end{proof}

Combining \cref{Lem_gen,p:enrich-f1f2}, we recover the following famous result.

\begin{corollary}
    If $(\Lie \left\{ f_1, \dotsc, f_m \right\})(0) = \K^d$, then system \eqref{eq:syst-f0-f1-fm} is STLC.
\end{corollary}

\section{Construction of the Magnus system} 
\label{sec:Magnus-system}

In this section, $\K=\R$ or $\C$ denotes the base field, and $X=\{X_0, X_1\}$ is a set of two noncommutative indeterminates.

In \cref{subsec:alg}, we introduce the free algebra and free Lie algebra generated by $X$ over $\K$, together with the algebra of formal power series.
In \cref{subsec=hom_formal_eq}, we introduce the formal differential equation associated with \eqref{EDOf0fA}.
In \cref{subsec:Magnus_A}, we derive a Magnus-type representation formula for this solution in terms of a formal Lie series. 
In \cref{subsec:Trunc}, we study its truncation to degree $\leq N$ and prove that it solves an affine system posed on the free nilpotent Lie algebra. 
In \cref{subsec:LARC_Magnus_syst}, we analyze the associated Lie algebra rank condition.

\subsection{Free algebras} \label{subsec:alg}

\begin{definition}[Free associative algebra]
    \label{def:free.algebra}
    Let $\cA(X) = \cA^0(X) \oplus \cA^1(X) \oplus \dotsb$ be the algebra of noncommutative polynomials in $X_0$, $X_1$ with coefficients in $\K$, where $\cA^n(X)$ is the finite dimensional $\K$-vector space spanned by words of length $n$ over $X$ (e.g.\ $\cA^0(X) = \K$ and $\cA^1(X) = \K X_0 \oplus \K X_1$).
\end{definition}

\begin{definition}[Lie bracket]
    The algebra $\cA(X)$ is endowed with the Lie bracket $[a,b] := ab - ba$, which satisfies $[a, a] = 0$ and the Jacobi identity $[a,[b,c]]+[c,[a,b]]+[b,[c,a]]=0$.
\end{definition}

\begin{definition}[Free Lie algebra]
    \label{def:free.lie}
    Let $\cL(X)$ be the Lie subalgebra of $\cA(X)$ generated by $X$, i.e.\ its smallest vector subspace containing $X_0, X_1$ and stable under Lie brackets. 
    There is an \emph{evaluation} map $\eval$ from the free magma $\Br(X)$ defined in \cref{def:free-magma} to~$\cL(X)$.
    It is defined by $\eval(X_i) = X_i$ for $X_i \in X$ and $\eval((b_1,b_2)) = [\eval(b_1),\eval(b_2)]$ for $b_1,b_2 \in \Br(X)$. 
    Then $\eval(\Br(X))$ spans $\cL(X)$.
\end{definition}

\begin{definition}[Formal power series]
    Let $\widehat{\cA}(X)$ be the unital associative algebra of formal power series in $X_0$, $X_1$: its elements are sequences $a = (a^{\langle n \rangle})_{n\in\N}$ usually written $a = \sum_{n \in \N} a^{\langle n \rangle}$, where $a^{\langle n \rangle} \in \cA^n(X)$; in particular, $a^{\langle 0 \rangle} \in \K$ is the constant term.
    Let $\widehat{\cL}(X)$ be the Lie algebra of formal power series $a \in \widehat{\cA}(X)$ for which $a^{\langle n \rangle} \in \cL(X)$ for each $n \in \N$.
\end{definition}

For $a \in \widehat{\cA}(X)$ with $a^{\langle 0 \rangle}=0$,
\begin{equation}
    \label{def:exp_log}
    \exp(a) = \sum_{k=0}^\infty \frac{a^k}{k!}
    \quad \text{ and } \quad
    \log(1+a) = \sum_{k=1}^\infty \frac{(-1)^{k+1}}{k} a^k
\end{equation}
are well defined elements of $\widehat{\cA}(X)$.
Moreover, the following identities hold in $\widehat{\cA}(X)$:
\begin{equation}
    \label{exp(log)}
    \exp(\log(1+a)) = 1+a \quad \text{and} \quad \log(\exp(a)) = a.
\end{equation}

\subsection{The formal differential equation}
\label{subsec=hom_formal_eq}

Fix a finite subset $\cE$ of $\Br(X) \setminus \{ X_0 \}$ and a control $u=(u_b)_{b \in \cE} \in L^1(\R^+;\K^{|\cE|})$.
We consider the formal differential equation posed on $\widehat{\cA}(X)$:
\begin{equation}
    \label{formalODE_A}
    \dot{S}(t)=S(t) \left( X_0+\sum_{b \in \cE} u_b(t) b \right)
    \quad \text{and} \quad S(0) = 1. 
\end{equation}
In \eqref{formalODE_A}, and in all this work, we identify $b \in \Br(X)$ with its evaluations $\eval(b)$ in $\cL(X)$, or in $\widehat{\cL}(X)$.

\begin{definition}
    \label{def:formalODE}
    The \emph{solution to \eqref{formalODE_A}} is the formal-series-valued function $S: \R^+ \to \widehat{\cA}(X)$, denoted $S(t,X,u)$, whose homogeneous components $S^{\langle n \rangle} : \R^+ \to \cA^n(X)$ are defined, for every $t \geq 0$, by $S^{\langle 0 \rangle}(t) = 1$ and, for every $n \in \N^*$, by
    \begin{equation}
        \label{eq:xn.xn1}
        S^{\langle n \rangle}(t) = \int_0^t \left(
        S^{\langle n-1 \rangle}(\tau) X_0
        +\sum_{b \in \cE} u_b(\tau) S^{\langle n-|b| \rangle}(\tau) b
        \right) \dd\tau
    \end{equation}
    with the convention that $S^{\langle n \rangle} = 0$ for $n < 0$.
\end{definition}

Iterating this integral formula yields an expansion of $S(t,X,u)$ in $\widehat{\cA}(X)$, called the Chen series \cite{MR0073174,zbMATH03126609} and popularized in control theory by \cite{MR613847}.
The following proposition emphasizes an important homogeneity property of this solution.

\begin{proposition}[Dilation $\Lambda$]
    \label{Prop:S-hom}
    For $u \in L^1(\R^+;\K^{|\cE|})$ and $a \in \K$, define $\lambda_a u$ by $(\lambda_a u)_b := a^{n_1(b)} u_b$ for $b \in \cE$.
    Then, for all $t \geq 0$, $S(t,X,\lambda_a u) = \Lambda_a S(t,X,u)$ where $\Lambda_a : \widehat{\cA}(X) \to \widehat{\cA}(X)$ is the unique morphism of algebras such that $\Lambda_a(X_0) = X_0$ and $\Lambda_a(X_1) = a X_1$.
\end{proposition}

\begin{proof}
    If $u$ is continuous, since $\Lambda_a$ is a morphism of algebras, one has
    \begin{equation}
        \frac{\dd}{\dd t} [ \Lambda_a S ]
        = \Lambda_a \dot{S}
        = \Lambda_a \Big[ S \big(X_0 + \sum_{b \in \cE} u_b b\big) \Big] 
        = (\Lambda_a S) \big(X_0 + \sum_{b \in \cE} (\lambda_a u)_b b \big)
    \end{equation}
    where we used that $\Lambda_a X_0 = X_0$ and $\Lambda_a(u_b b) = u_b \Lambda_a(b) = u_b a^{n_1(b)} b = (\lambda_a u)_b b$.
    This proves that $\Lambda_a S(t,X,u)$ satisfies the same differential equation as $S(t,X,\lambda_a u)$. 
    So both are equal.
    When $u \in L^1$, we pass to the limit in the Duhamel formulation.
\end{proof}

\subsection{A Magnus-type representation formula}
\label{subsec:Magnus_A}

It is known since the work of Magnus \cite{Magnus54ote} in 1954 that the solution to the linear system \eqref{formalODE_A} can be written as the exponential of a Lie series.
We describe here a slightly different representation which better reflects the particular role of $X_0$ (see \cite{P1} for an in-depth comparison of both representation formulas).
We need the following definitions.

\begin{definition}
    \label{def:oslash}
    Let $\cL^\oslash(X)$ denote the Lie ideal of $\cL(X)$ generated by $X_1$, and $\widehat{\cL}^\oslash(X)$ the associated space of formal power series.
    One has $\cL(X) = \K X_0 \oplus \cL^\oslash(X)$.
    Moreover, Lazard's elimination (see~\cite{Viennot1978}) proves that $\cL^\oslash(X)$ is actually isomorphic to $\cL(\{ M_j \mid j \in \N \})$, the free Lie algebra generated by the family $(M_j)_{j \in \N}$ (recall \cref{def:Mnu-Wj}).
\end{definition}

\begin{definition}
    \label{def:Omega_b}
    For $b \in \Br(X) \setminus \{ X_0 \}$, we define the map $\Omega_b:\widehat{\cL}^\oslash(X) \rightarrow \widehat{\cL}^\oslash(X)$ by
    \begin{equation}
        \Omega_b(\cZ) := \sum_{n=0}^\infty \frac{(-1)^n B_n}{n!} \ad_{\cZ}^n(b),
    \end{equation}
    where $(B_n)_{n\in\N}$ are the Bernoulli numbers (see \cref{subsec:Bernoulli}). 
\end{definition}

\begin{proposition}
    \label{Prop:Magnus}
    For $t \in \R^+$ and $u \in L^1(\R^+;\K^{|\cE|})$, define
    $\cZ(t,X,u) := \log( \exp( -t X_0 ) S(t,X,u))$ in $\widehat{\cA}(X)$, so that one has
    \begin{equation}
        \label{Magnus_1.1}
        S(t,X,u)=\exp(t X_0) \exp(\cZ(t,X,u)).
    \end{equation}
    Then the map $\cZ$ inherits the same homogeneity properties as the map $S$ (see \cref{Prop:S-hom}).
    Moreover, for every $u \in L^1(\R^+;\K^{|\cE|})$,
    $\cZ( \cdot, X;u)$ solves the formal differential equation on $\widehat{\cL}^\oslash(X)$
    \begin{equation}
        \label{EDO_Z_1.1}
        \dot{\cZ}(t)=[\cZ(t),X_0] + \sum_{b \in \cE} u_b(t) \Omega_{b}(\cZ(t))
    \end{equation}
    with initial condition $\cZ(0)=0$.
    Thus, for every $t \in \R^+$, $\cZ(t,X,u) \in \widehat{\cL}^\oslash(X)$.
    
    As in \cref{def:formalODE}, the formal differential equation \eqref{EDO_Z_1.1} is understood in projection on the finite dimensional spaces $\cA^n(X)$ and in the integral sense.
\end{proposition}

\begin{proof}
    In this proof, to simplify notations, we write $\cZ(t)$ and $S(t)$ instead of $\cZ(t,X,u)$ and $S(t,X,u)$. 
    We have $\exp(\cZ(t))=\exp(-tX_0)S(t)$. 
    Thus we deduce from \eqref{formalODE_A} that
    \begin{equation}
        \exp(-\cZ(t)) \frac{\dd}{\dd t} \exp(\cZ(t)) =
        - \exp(-\cZ(t)) X_0 \exp(\cZ(t)) +X_0+ \sum_{b \in \cE} u_b(t) b .
    \end{equation}
    We deduce from the formula \eqref{dot{z}} that
    \begin{equation}
        \dot{\cZ}(t)=g_0(\cZ)+\sum_{b \in \cE} u_b(t) \Omega_b(\cZ),
    \end{equation}
    where
    \begin{equation}
        \begin{split}
            g_0(\cZ)
        & :=
        \sum_{n=0}^\infty \frac{(-1)^n B_n}{n!} \ad_{\cZ}^n\big( X_0 - 
        \exp(-\cZ) X_0 \exp(\cZ) \big) \\
        & =
        - \sum_{n=0}^\infty \frac{(-1)^n B_n}{n!} \ad_{\cZ}^n
        \sum_{k=1}^\infty \frac{(-1)^k}{k!} \ad_{\cZ}^k(X_0)
        \\ & =
        - \sum_{m=1}^\infty \frac{(-1)^m}{m!} \ad_{\cZ}^m (X_0)
        \sum_{n=0}^{m-1} \binom{m}{n} B_n
        =
        \ad_{\cZ}(X_0)
        \end{split}
    \end{equation}
    and the last equality results from \eqref{eq:bernoulli.1}.
\end{proof}

\begin{remark}
    The usual Magnus formula for $S$ is of the form $S = \exp Z$ or equivalently $Z = \log S$. 
    This is why we call \eqref{Magnus_1.1} a ``Magnus-type'' representation formula for $S$.
    In this article, using $\cZ$ instead of $Z$ allows to work with a control system \eqref{EDO_Z_1.1} having an equilibrium at $0$. 
\end{remark}

\subsection{Truncations and the Magnus system}
\label{subsec:Trunc}

\begin{definition}[Free nilpotent algebra]
    For $N \in \N$, we define
    \begin{equation}
        \cA_N(X) = \underset{n \in \intset{0, N}}{\oplus} \cA^n(X),
    \end{equation}
    the space of polynomials with degree $\leq N$ and $\pi_N: \widehat{\cA}(X) \to \cA_N(X)$ the canonical surjection (truncation).
    $\cA_N(X)$ is not a subalgebra of $\widehat{\cA}(X)$, because it is not closed under product, but it can be given a structure of algebra by defining the multiplication of two elements $a,b \in \cA_N(X)$ by $\pi_N(ab)$ i.e.\ the multiplication on $\cA_N(X)$ is the same as on $\widehat{\cA}(X)$ except that monomials of degree $>N$ are discarded. Then $\pi_N$ is a morphism of algebras:
    \begin{equation}
        \label{piN(prod)}
        \forall S, S' \in \widehat{\cA}(X),\qquad
        \pi_N ( S S') = \pi_N(S) \pi_N(S').
    \end{equation}
\end{definition}

\begin{definition}[Free nilpotent Lie algebra]
    The Lie subalgebra of $\cA_N(X)$ generated by $X$ is $\cL_N(X) := \pi_N( \cL(X) )$. 
    The Lie ideal of $\cL_N(X)$ generated by $\{X_1\}$ is $\cL^\oslash_N(X) := \pi_N( \cL^\oslash(X))$. 
    Moreover $\cL_N(X) = \K X_0 \oplus \cL^\oslash_N(X)$.
\end{definition}

To simplify the writing, we use the same notations $\exp$ and $\log$ for the truncated versions on $\cA_N(X)$ of the maps $\exp$ and $\log$ defined in \eqref{def:exp_log} i.e.\ for $a \in \cA_N(X)$ with $a^{\langle0\rangle}=0$ we write $\exp(a)$ instead of $\pi_N(\exp(a))$ and
$\log(1+a)$ instead of $\pi_N(\log(1+a))$. 
Since $\pi_N$ is a morphism of algebras, the equalities \eqref{exp(log)} hold in $\cA_N(X)$.
For $b \in \Br(X) \setminus \{X_0\}$, we also denote $\Omega_b$ the truncated version on $\cL^\oslash_N(X)$ of the map $\Omega_b$ defined in \eqref{def:Omega_b}. 
Since $\pi_N$ is a morphism of algebras, we deduce from \cref{Prop:Magnus} the following results.

\begin{proposition}
    \label{Prop:trunc}
    Let $N\in\N$. 
    For $t \in \R^+$ and $u \in L^1(\R^+;\K^{|\cE|})$, we define
    \begin{equation}
        \begin{aligned}
            S_N(t,X,u) & := \pi_N(S(t,X,u)) \in \cA_N(X),
            \\
            \cZ_N(t,X,u) & := \pi_N( \cZ(t,X,u)) \in \cL^\oslash_N(X).
        \end{aligned}
    \end{equation}
    Then the maps $S_N$ and $\cZ_N$ inherit the same homogeneity properties as $S$ (see \cref{Prop:S-hom}).
    For $t \in \R^+$ and $u \in L^1(\R^+;\K^{|\cE|})$, 
    \begin{equation}
        \label{SN=exp(ZN)}
        \begin{aligned}
        \cZ_N(t,X,u)&=\log( \exp(- t X_0) S_N(t,X,u)),\\
        S_N(t,X,u)&=\exp(t X_0) \exp ( \cZ_N(t,X,u) ).
        \end{aligned}
    \end{equation}
    The maps $t \mapsto S_N(t,X,u)$ and $t \mapsto \cZ_N(t,X,u)$ solve the ordinary differential equations
    \begin{gather}
        \label{EDO_SN}
        \dot{S}_N(t) = S_N(t) \big( X_0+\sum_{b \in \cE} u_b(t) b \big) 
        \quad \text{ and } \quad S_N(0) = 1,
        \\
        \label{EDO_ZN_1.1}
        \dot{\cZ}_N(t)  =[\cZ_N(t),X_0] + \sum_{b \in \cE} u_b(t) \Omega_{b}(\cZ_N(t))
        \quad \text{and} \quad \cZ_N(0) = 0.
    \end{gather}
\end{proposition}

The systems \eqref{EDO_SN} and \eqref{EDO_ZN_1.1}, set on the finite dimensional spaces $\cA_N(X)$ and $\cL^\oslash_N(X)$, have the form \eqref{eq:syst-f0-f1-fm}. 
Thus, we have a notion of solution when $u \in \fD$ (see \cref{s:impulsive}). 
Then the equalities \eqref{SN=exp(ZN)} still hold (pass to the limit in the equality for solutions associated with the $L^1$ regularizations). 
We will call the equation \eqref{EDO_ZN_1.1} the Magnus system.

\subsection{Lie algebra rank condition}
\label{subsec:LARC_Magnus_syst}

\begin{proposition}
    \label{Prop:LARC_Z}
    Let $N \in \N^*$ and $g_0 := [\cdot, X_0]$ and $g_1 := \Omega_{X_1}$  smooth vector fields on $\cL^\oslash_N(X)$.
    As in \cref{def:eval-fb}, denote $b \mapsto g_b$ the Lie algebra morphism generated by $X_i \mapsto g_i$.
    Then $g_b=\Omega_b$ for every $b \in \Br(X) \setminus \{X_0\}$.
    As a consequence $(\Lie \left\{g_0,g_1\right\})(0)=\cL^\oslash_N(X)$.
\end{proposition}

\begin{proof}
    \emph{Step 1: We prove that, for every $b,b' \in \Br(X) \setminus \{X_0\}$ then $[\Omega_{b},\Omega_{b'}] = \Omega_{[b,b']}$.} 
    
    For $b \in \Br(X) \setminus \{X_0\}$, we define a smooth vector field $F_b$ on $\cA_N(X)$ by $F_b(S)=Sb$. Then, for every $b,b' \in \Br(X)$, we have $[F_{b},F_{b'}]=F_{[b,b']}$.
    The set $G^\oslash_N(X)=\{\exp(\cZ) \mid \cZ \in \cL^\oslash_N(X)\}$ is a submanifold of $\cA_N(X)$ with dimension $\dim \cL^\oslash_N(X)$ and the $F_b$ are vector fields on $G^\oslash_N(X)$. 
    The logarithm is a diffeomorphism from $G^\oslash_N(X)$ to $\cL^\oslash_N(X)$ with reciprocal $\exp: \cL^\oslash_N(X) \to G^\oslash_N(X)$.
    With this formalism, for $b \in \Br(X) \setminus \{X_0\}$, the vector field $\Omega_b$ on $\cL^\oslash_N(X)$ is the push-forward of $F_b$ by the logarithm: $\Omega_b (\cZ) = (\log_* F_b)(\cZ) = D \log( \exp(\cZ) ) F_b(\exp(\cZ))$. 
    Thus for all $b,b' \in \Br(X) \setminus \{X_0\}$, $[\Omega_{b},\Omega_{b'}]=\log_*[F_{b},F_{b'}]=\log_* F_{[b,b']}=\Omega_{[b,b']}$ (see \cite[Lemma 89]{P1}).

    \medskip
    \noindent \emph{Step 2: We prove by induction on $j \in \N$ that $g_{M_j}=\Omega_{M_j}$.} The initialization for $j=0$ results from the definitions: $g_{M_0}=g_{X_1}=g_1=\Omega_{X_1}$. Let $j \in \N$. We assume $g_{M_j}=\Omega_{M_j}$. Then
    \begin{equation}
        \begin{split}
            g_{M_{j+1}}(\cZ) & = [g_{M_j},g_0](\cZ)=[\Omega_{M_j},g_0](\cZ) =
        [\Omega_{M_j}(\cZ),X_0] - D(\Omega_{M_j})(\cZ) \cdot [\cZ,X_0]
        \\ & =
        [M_j,X_0] +
        \sum_{n=1}^\infty\frac{(-1)^n B_n}{n!} \left(
        [\ad_{\cZ}^n(M_j),X_0] - \sum_{k=0}^{n-1} \ad_{\cZ}^{n-1-k} \ad_{[\cZ,X_0]} \ad_{\cZ}^k(M_j)
        \right)
        \\ &
        = M_{j+1} + \sum_{n=1}^\infty\frac{(-1)^n B_n}{n!}
        \ad_{\cZ}^n(M_{j+1}) = \Omega_{M_{j+1}}(\cZ)
        \end{split}
    \end{equation}
    where the last line results from the Jacobi identity.

    \medskip
    \noindent \emph{Step 3: We prove that for every $b \in \Br(X) \setminus \{X_0\}$ then $g_b=\Omega_b$.} 
    Recalling \cref{def:oslash} of $\cL^\oslash(X)$, the evaluation in $\cL^\oslash_N(X)$ of $b \in \Br(X) \setminus \{X_0\}$ is a linear combination of iterated Lie brackets of the $M_j$ for $j \in \N$. 
    Thus, by Step 1 and Step 2, $g_b=\Omega_b$. 
    In particular $g_b(0)=b$, which entails the Lie algebra rank condition.
\end{proof}

\section{Control, splitting and the Magnus system}
\label{Sec:Control_split_Magnus}

In this section, $\K=\R$ or $\C$ is the base field of all the vector spaces, and $\cE$ is a finite subset of $\Br(X) \setminus \{ X_0 \}$.
We define the ``controllability'' of the Magnus system \eqref{EDO_ZN_1.1} in \cref{subsec:D-control_Magnus_syst}.
We state error estimates in \cref{subsec:error_Magnus_syst}.
Finally, in \cref{sec:link}, we prove the fundamental link between the controllability of the Magnus system, the existence of an $(\R^+,\K)$ splitting method of order $N$ involving $X_0$ and the elements of $\cE$, and the controllability of the system \eqref{EDOf0fA}.

\subsection{Controllability of the Magnus system}
\label{subsec:D-control_Magnus_syst}

The Magnus system \eqref{EDO_ZN_1.1} is a control-affine system of the form studied in \cref{sec:Prerequis}.
For $T > 0$ and either $\cU = \fD$ or $\cU = L^1$, define the set of reachable states for system \eqref{EDO_ZN_1.1} as
\begin{equation}
    R_T(\cU) := \{ \cZ_N(T,X,u) \mid u \in \cU \} \subset \cL_N^\oslash(X).
\end{equation}
The homogeneity properties of \eqref{EDO_ZN_1.1} entail the following result.

\begin{proposition}
    \label{Prop:equivalence}
    Let $\cU = \fD$ or $\cU = L^1$.
    For system \eqref{EDO_ZN_1.1}, the following notions are equivalent, and will be called \emph{$\cU$-controllability} of system \eqref{EDO_ZN_1.1}:
    \begin{flalign*}
        & \quad - \quad \text{small-time local exact $\cU$-controllability:} 
        && \forall T > 0, \quad 0 \in \operatorname{int} R_T(\cU), &\\
        & \quad - \quad \text{small-time local approximate $\cU$-controllability:} 
        && \forall T > 0, \quad 0 \in \operatorname{int} \overline{R_T(\cU)}, & \\
        & \quad - \quad \text{small-time global exact $\cU$-controllability:}
        && \forall T > 0, \quad R_T(\cU) = \cL^\oslash_N(X), & \\
        & \quad - \quad \text{small-time global approximate $\cU$-controllability:}
        && \forall T > 0, \quad \overline{R_T(\cU)} = \cL^\oslash_N(X), 
    \end{flalign*}
    Moreover, system \eqref{EDO_ZN_1.1} is $\fD$-controllable if and only if it is $L^1$-controllable.
\end{proposition}

\begin{proof}
    We prove the equivalence of these 8 notions.

    \medskip
    \emph{Exact/approximate local controllability.}
    Recalling \cref{def:Omega_b}, we obtain from \eqref{EDO_ZN_1.1} that $\pi_1 \dot{\cZ_N} = 0$ if $X_1 \notin \cE$.
    Thus approximate controllability implies that $X_1 \in \cE$.
    By \cref{Prop:LARC_Z}, $(\Lie \left\{ g_0, g_1 \right\})(0) = \cL^\oslash_N(X)$.
    Thus, by \cref{p:approx=exact}, for $\cU = \fD$ or $L^1$, the local exact/approximate $\cU$-controllability are equivalent.

    \medskip
    \emph{$\fD$/$L^1$.}
    By \cref{p:D=L1}, for any $T > 0$, $\overline{R_T(\fD)} = \overline{R_T(L^1)}$, so each of the four approximate $\fD$ notions is equivalent to its $L^1$ counterpart.
    
    \medskip
    \emph{Local/global.}
    Since we don't require smallness of the intermediate states or of the controls used, the \emph{global} statements imply the \emph{local} ones.
    The scaling properties of system \eqref{EDO_ZN_1.1} entail the converse implications.
    For any $a > 0$, with the notations of \cref{Prop:S-hom}, by \cref{Prop:Magnus}, $\cZ_N(T,X,\lambda_a u) = \Lambda_a \cZ_N(T,X,u)$.
    Since $\Lambda_a$ is continuous with inverse $\Lambda_{a^{-1}}$, for any $a > 0$, there holds $\Lambda_a(R_T(\cU)) = R_T(\cU)$ and $\Lambda_a(\overline{R_T(\cU)}) = \overline{R_T(\cU)}$.
    Finally, for any $\cZ^* \in \cL^\oslash_N(X)$, since $\cZ^*$ has no component along $X_0$, $\Lambda_a(\cZ^*) \to 0$ as $a \to 0$.
    Hence each of the four local notions implies its global counterpart.
\end{proof}

\subsection{Error estimates}
\label{subsec:error_Magnus_syst}

We will need the following error estimates, proved in \cref{sec:error-proofs}.
Recall that $x(t;f,u,x_0)$ denotes the solution to \eqref{EDOf0fA}, $S_N(t,X,u)$ the solution to \eqref{EDO_SN} and $\cZ_N(t,X,u)$ the solution to \eqref{EDO_ZN_1.1}.

\begin{proposition}
    \label{prop:error-split}
    Let $f_0, f_1 \in \cC^\infty(\K^d;\K^d)$, $x_0 \in \K^d$, $u \in \fD$ or $L^1$ and $N \in \N^*$.
    Assume that $u$ is such that $S_N(1,X,u)=\exp(X_0+X_1)$.
    As $T \to 0$,
    \begin{equation}
        \label{eq:prop-error-split}
        x(1;Tf,u,x_0)=e^{T(f_0+f_1)}(x_0) + O\left(T^{N+1}\right).
    \end{equation}
\end{proposition}

\begin{proposition}
    \label{prop:error-Z}
    Let $f_0, f_1 \in \cC^\infty(\K^d;\K^d)$, $x_0 \in \K^d$, $u \in \fD$ or $L^1$ and $N \in \N^*$.
    As $T \to 0$,
    \begin{equation}
        \label{eq:error-x-exp(z)}
        x(1;Tf,u,x_0) = e^{\cZ_N(1,Tf,u)} e^{T f_0}(x_0) + O\left(T^{N+1}\right)
    \end{equation}
    where $\cZ_N(1,Tf,u) \in \cC^\infty(\K^d;\K^d)$ is the image of $\cZ_N(1,X,u)$ by the morphism of $\K$ Lie algebras from $\cL(X)$ to $\cC^\infty(\K^d;\K^d)$ mapping $X_0$ to $T f_0$ and $X_1$ to $T f_1$ (see \cref{lem:C-morphism-L} when $\K = \C$).
\end{proposition}

\begin{remark}
    \label{rem:error-Z-nilpotent}
    Let $f_0, f_1 \in \cC^\infty(\K^d;\K^d)$ and $N \in \N^*$.
    Assume that the Lie algebra $\Lie \left\{f_0,f_1\right\}$ is nilpotent of step $R$.
    For all $N \geq R$, $x_0 \in \K^d$ and $u \in \fD$ or $L^1$, for $T > 0$ small enough,
    \begin{equation}
        x(1;Tf,u,x_0) = e^{\cZ_N(1,Tf,u)} e^{T f_0} (x_0).
    \end{equation}
    Since $\Lie \left\{f_0,f_1\right\}$ is nilpotent of step $R$, for all $N \geq R$, $\cZ_N(1,Tf,u) = \cZ_R(1,Tf,u)$.
    When $f_0$ and $f_1$ are analytic, the maps $T \mapsto x(1;Tf,u,x_0)$ and $T \mapsto e^{\cZ_R(1,Tf,u)} e^{T f_0} (x_0)$ are also analytic.
    By \cref{prop:error-Z}, the error estimate \eqref{eq:error-x-exp(z)} holds for all $N$.
    Thus both maps are equal.

    The result is also true without assuming analyticity of the vector fields.
    This can be proved by defining $y(t) := e^{\cZ_N(t,Tf,u)} e^{t T f_0} (x_0)$ and remarking that it satisfies the same ODE as $x(t;Tf,u,x_0)$.
    It is a particular case of \cite[Proposition 120]{P1}.
\end{remark}

\subsection{Fundamental link}
\label{sec:link}

We emphasize a fundamental link between the $\fD$-controllability of the Magnus system \eqref{EDO_ZN_1.1}, the existence of an $(\R^+,\K)$ splitting method of order $N$ involving $X_0$ and the elements of~$\cE$ and the controllability of the system \eqref{EDOf0fA}.

We first remark that an $(\R^+,\K)$ splitting method of order $N$ involving $X_0$ and the enables brackets~$\cE$ can be interpreted as a trajectory of the system \eqref{EDO_SN} reaching a particular target.

\bigskip

\begin{theorem}
    \label{prop:splitting-iff}
    Let $N \in \N^*$.
    The following statements are equivalent:
    \begin{enumerate}
        \item \label{it:pCSN-1}
        There exists $u \in \fD$ such that $S_N(1,X,u)=\exp (X_0+X_1)$ in $\cA_N(X)$.
        \item \label{it:pCSN-2}
        There exists $u \in \fD$ such that $\cZ_N(1,X,u)=\log( \exp( -X_0) \exp ( X_0+X_1) )$ in $\cL^\oslash_N(X)$.
        \item \label{it:pCSN-3}
        There exists an $(\R^+,\K)$ splitting method of order $N$ involving $X_0$ and the elements of $\cE$.
    \end{enumerate}
\end{theorem}

\begin{proof}
    The equivalence between \cref{it:pCSN-1} and \cref{it:pCSN-2} results from \eqref{SN=exp(ZN)}.

    \medskip

    \noindent \emph{\cref{it:pCSN-1} $\Rightarrow$ \cref{it:pCSN-3}:}
    Assume that there exists a control $u \in \fD$ such that $S_N(1,X,u)=\exp (X_0+X_1)$ in $\cA_N(X)$.
    Let $f_0, f_1 \in \cC^\infty(\K^d;\K^d)$ and $x_0 \in \K^d$.
    By \cref{prop:error-split},
    \begin{equation}
        x(1; T f,u,x_0)=e^{T(f_0+f_1)}x_0 + \underset{T \to 0}{O}(T^{N+1})
    \end{equation}
    and the left-hand side is an $(\R^+,\K)$ splitting method involving $X_0$ and the elements of $\cE$.

    \medskip
    
    \noindent \emph{\cref{it:pCSN-3} $\Rightarrow$ \cref{it:pCSN-2}:}
    Assume that there exists an $(\R^+,\K)$ splitting method of order $N$ involving $X_0$ and the elements of $\cE$. 
    Let $u \in \fD$ be the associated control, i.e.\ for all smooth vector fields $f_0,f_1$ on $\K^d$ and for every $x_0 \in \K^d$, the solution to \eqref{EDOf0fA} satisfies $x(1;Tf,u,x_0)=e^{T(f_0+f_1)} x_0 +O(T^{N+1})$ as $T \to 0$. 
    This holds in particular with the vector fields $g_0, g_1$ on $\cL^\oslash_N(X)$ defined in \cref{Prop:LARC_Z}, involved  in \eqref{EDO_ZN_1.1} and $x_0=0$.
    Thus, the following equality holds in $\cL^\oslash_N(X)$
    \begin{equation} \label{Z-e^=O(TM+1}
        x(1;Tg,u,0) - e^{T(g_0+g_1)} (0) = \underset{T \to 0}{O}(T^{N+1}).
    \end{equation}
    
    On the one hand, let $\underline{\cE} = \{ X_1 \}$ and $\underline{u} = \underline{u}_{X_1} = 1$. 
    Using~\eqref{EDO_ZN_1.1} and \eqref{SN=exp(ZN)}, we obtain the following equality in $\cL^\oslash_N(X)$ for all $T \geq 0$:
    \begin{equation}
        \begin{split}
            e^{T(g_0+g_1)}(0)=\cZ_N(T,X,\underline{u})
            & =\log( \exp(-T X_0) S_N(T,X,\underline{u})) \\
            & =\log( \exp ( -T X_0) \exp ( T(X_0+X_1) )).
        \end{split}
    \end{equation}
    In particular, $e^{T(g_0+g_1)} (0)$ is a polynomial in $T$ of degree $\leq N$.
    
    On the other hand, let $\mu_T$ be the Lie algebra morphism on $\cL(X)$ defined by $\mu_T(X_i) = T X_i$.
    A computation proves that $x(1;Tg,u,0) = \mu_T(\cZ_N(1,X,u))$, so is a polynomial in $T$ of degree $\leq N$
    
    Therefore, the estimate \eqref{Z-e^=O(TM+1} implies that
    \begin{equation}
        \forall T \in \R, \qquad x(1;Tg,u,0) = e^{T(g_0+g_1)} (0).
    \end{equation}
    Finally, for $T = 1$, we get $\cZ_N(1,X,u) = x(1;g,u,0) = \log (\exp ( -X_0 ) \exp( X_0+X_1) )$.
\end{proof}

\begin{theorem}
    \label{prop:ZN=>both}
    Let $N \in \N^*$.
    Assume that system \eqref{EDO_ZN_1.1} is controllable (see \cref{Prop:equivalence}).
    Then
    \begin{enumerate}
        \item \label{it:prop:ZN=>1}
        there exists an $(\R^+,\K)$ splitting method of order $N$ involving $X_0$ and the elements of $\cE$,
        \item \label{it:prop:ZN=>2}
        for every pair of smooth vector fields $f_0, f_1$ on a neighborhood of $0$ in $\K^d$ such that $f_0(0)=0$ and $\cL_N(f)(0) = \K^d$,  system \eqref{EDOf0fA} is small-state small-time locally controllable.
    \end{enumerate}
\end{theorem}

\begin{proof}
    We prove \cref{it:prop:ZN=>1}.
    If system \eqref{EDO_ZN_1.1} is controllable, in particular, there exists $u \in \fD$ such that $\cZ_N(1,X,u) = \log(\exp(-X_0) \exp(X_0+X_1))$ in $\cL^\oslash_N(X)$.
    Thus, by \cref{prop:splitting-iff}, there exists an $(\R^+,\K)$ splitting method of order $N$ involving $X_0$ and the elements of $\cE$.

    We prove \cref{it:prop:ZN=>2}.
    Let $r$ be the dimension of $\cL^\oslash_N(X)$ as an $\K$-vector space and $b_1,\dots,b_r \in \Br(X)$ whose evaluations in $\cL^\oslash_N(X)$ form a basis of this space over $\K$.
    Since \eqref{EDO_ZN_1.1} is controllable, for each $j \in \intset{1,r}$, there exist controls $\underline{u}^{j,\pm}=(\underline{u}^{j,\pm}_b)_{b \in \cE} \in L^1((0,1);\K^{|\cE|})$ such that
    \begin{equation}
        \label{biortho-bis}
        \cZ_N(1,X,\underline{u}^{j,\pm})=\pm b_j.
    \end{equation}
    For $T>0$, define $u^{T,j,\pm} = (u^{T,j,\pm}_b)_{b \in \cE}$ as
    \begin{equation}
        \forall b \in \cE, \forall t \in [0,T], \qquad
        u^{T,j,\pm}_b(t) := T^{|b|-1} \underline{u}^{j,\pm}_b \left( \frac{t}{T} \right).
    \end{equation}
    In particular, for $0 \leq T \leq 1$, one has $\| u^{T,j,\pm} \|_{L^1(0,T)} \leq T \| \underline{u}^{j,\pm} \|_{L^1(0,1)}$.
    
    Let $d \in \N^*$ and $f_0, f_1$ be smooth vector fields on a neighborhood of $0 \in \K^d$ such that $f_0(0)=0$.
    As $T \to 0$, the solution to \eqref{EDOf0fA} satisfies
    \begin{equation}
        \begin{aligned}
            x(T;f,u^{T,j,\pm},0) 
            & = x(1;Tf,\underline{u}^{j,\pm},0) 
            && \text{by time rescaling} \\
            & = e^{\cZ_N(1,Tf,\underline{u}^{j,\pm})} e^{T f_0}(0) + O\left(T^{N+1}\right) 
            && \text{by \eqref{eq:error-x-exp(z)} of \cref{prop:error-Z}} \\
            & = e^{\pm T^{|b_j|} f_{b_j}}(0) + O\left(T^{N+1}\right) 
            && \text{by \eqref{biortho-bis} and $f_0(0)=0$} \\
            & = \pm T^{|b_j|} f_{b_j}(0) + O\left(T^{2|b_j|} + T^{N+1}\right) 
            && \text{by Taylor expansion} \\
            & = \pm T^{|b_j|} f_{b_j}(0) + O\left(T^{|b_j|+1}\right)
            && \text{since $|b_j| \geq 1$.}
        \end{aligned}
    \end{equation}
    In control theory, one says that $\pm f_{b_j}(0)$ are ``tangent vectors'' of order $|b_j|$.

    \emph{Case $\K = \R$.}
    Since $\cL_N(f)(0)=\R^d$ and $f_0(0)=0$, one can extract a subset $B\subset\{b_1,\dots,b_r\}$ such that $\{f_b(0)\mid b\in B\}$ is a basis of $\R^d$.
    By \cref{prop:tgt-Brouwer}, this implies that \eqref{EDOf0fA} is small-state small-time locally controllable in the sense of \cref{def:STLC}.

    \emph{Case $\K = \C$.}
    Since \eqref{EDO_ZN_1.1} is controllable, for each $j \in \intset{1,r}$, there also exist two controls $\underline{u}^{j,\pm i} \in L^1((0,1);\K^{|\cE|})$ such that $\cZ_N(1,X,\underline{u}^{j,\pm i}) = \pm i b_j$.
    As above, this entails that $\pm i f_{b_j}(0)$ are tangent vectors.
    Since $\cL_N(f)(0) = \C^d$ and $f_0(0) = 0$, we can extract a subset $B \subset \{ b_1, \dotsc, b_r \}$ such that $\{f_b(0) \mid b\in B\} \cup \{ i f_b(0) \mid b \in B \}$ is an $\R$ basis of $\C^d$.
    \cref{prop:tgt-Brouwer} concludes.
\end{proof}

\section{Complex controls and complex methods}
\label{Sec:Complex}

In this section, $\K=\C$ and we prove \cref{thm:main-complex-split,thm:main-complex-control}. 
After a preliminary remark on complex-valued systems, we reduce in \cref{sec:complex-reduction} these theorems to the controllability of the Magnus system, of which we give different proofs in \cref{sec:C-Sussmann,subsec:Complex_chow,subsec:Complex_extension}.

\subsection{A remark on complex-valued systems}
\label{sec:holomorphic-systems}

For complex-valued systems, some care is needed.
In particular, we explain why we always implicitly assume that the vector fields are \emph{holomorphic} in this paper when defined on $\C^d$.

\paragraph{Lie brackets of holomorphic vector fields.}

For $f, g \in \Hol(\C^d;\C^d)$, one can define their Lie bracket exactly as announced in \cref{def:Lie-fields} by setting
\begin{equation}
    \label{eq:Lie-C}
    [f,g](z) 
    := Dg(z) f(z) - Df(z) g(z)
    = \sum_{k = 1}^d f_k(z) \frac{\partial g}{\partial z_k} - g_k(z) \frac{\partial f}{\partial z_k},
\end{equation}
where the differentials are the $\C$-linear derivatives of $f$ and $g$.
Then $[f,g] \in \Hol(\C^d;\C^d)$, and the bracket $[\cdot, \cdot]$ defines a $\C$-bilinear operation (which is also anti-symmetric and satisfies the Jacobi identity).
In particular, this entails the existence of the following morphisms.

\begin{lemma}
    \label{lem:C-morphism-L}
    Given $f_0, f_1 \in \Hol(\C^d;\C^d)$, there exists a unique morphism of $\C$-Lie algebras from $\cL_\C(X)$ to $\Hol(\C^d;\C^d)$ mapping $X_0$ to $f_0$ and $X_1$ to $f_1$.
\end{lemma}

\begin{lemma}
    \label{lem:C-morphism-A}
    Let $\Op_\C$ denote the algebra of linear endomorphisms of $\Hol(\C^d;\C)$.
    We identify a vector field $f \in \Hol(\C^d;\C^d)$ with the associated first-order differential operator of $\Op_\C$.

    Given $f_0, f_1 \in \Hol(\C^d;\C^d)$, there exists a unique morphism of $\C$ associative algebras from $\cA_\C(X)$ to $\Op_\C$, mapping $X_0$ to $f_0$ and $X_1$ to $f_1$.
\end{lemma}

\paragraph{Realification.}

Let $\iota : \C^d \to \R^d \times \R^d$ be defined by $\iota (x+iy) := (x,y)$.
Given a vector field $f \in \cC^\infty(\C^d;\C^d)$, one can define its \emph{realification} as its pushforward by the diffeomorphism $\iota$: 
\begin{equation}
    (\iota_* f)(x,y) := 
    \begin{pmatrix}
        \Re f(x+iy) \\
        \Im f(x+iy)
    \end{pmatrix}.
\end{equation}
This defines a vector field $\iota_* f \in \cC^\infty(\R^{2d};\R^{2d})$.

If $f,g \in \Hol(\C^d;\C^d)$, the Cauchy--Riemann relations imply that
\begin{equation}
    \iota_* [f,g]_\C = [\iota_* f, \iota_* g]_\R,
\end{equation}
where $[\cdot, \cdot]_\C$ is the $\C$-bilinear Lie bracket of holomorphic vector fields defined in \eqref{eq:Lie-C} and $[\cdot, \cdot]_\R$ is the Lie bracket on $\R^{2d}$ of \cref{def:Lie-fields}.

\paragraph{Counterexamples.}

With non-holomorphic vector fields, things break apart.

Given $f, g \in \cC^\infty(\C^d;\C^d)$, smooth but not necessarily holomorphic, one can still try to define their Lie bracket through realification as
\begin{equation}
    [[f,g]] := \iota_*^{-1} \circ [\iota_* f, \iota_* g]
\end{equation}
However, the bracket $[[\cdot, \cdot]]$ is not $\C$-bilinear.
For instance, take $d := 1$ and $f(z) := \Re(z)$.
Then $\iota_* f(x,y) = (x,0)$ and $\iota_* (if) (x,y) = (0,x)$, which entails that
\begin{equation}
    \label{eq:[f,if]}
    [[f, if]] = i f \neq 0 = i [[f, f]].
\end{equation}
In particular, this prevents one from constructing a morphism of $\C$ Lie algebras from $\cL(X)$ to $\cC^\infty(\C^d;\C^d)$ mapping $X_0$ to $f_0$ and $X_1$ to $f_1$ when $f_0$ and $f_1$ are not holomorphic.

\paragraph{Flows.}

First, realification is compatible with flows (even without holomorphy).

\begin{lemma}
    \label{prop:flow-realification}
    Let $f\in \cC^\infty(\C^d;\C^d)$.
    Write $\Phi_t^f$ for the local flow of $\dot{z}=f(z)$ and $\Phi_t^{\iota_* f}$ for the local flow of $\dot{\xi}=(\iota_* f)(\xi)$ on $\R^{2d}$. Then, for all $z_0 \in \C^d$ and $t>0$ for which both sides are defined,
    \begin{equation}\label{eq:flow-conjugacy}
        \iota \circ \Phi_t^f (z_0) = \Phi_t^{\iota_* f} \circ \iota (z_0).
    \end{equation}
\end{lemma}

\begin{proof}
    Fix $z_0\in \C^d$ and set $z(t):=\Phi_t^f(z_0)$, so $\dot z(t)=f(z(t))$ and $z(0)=z_0$.
    Define $\xi(t):=\iota(z(t))\in \R^{2d}$. 
    By the chain rule and the definition of pushforward,
    \begin{equation}
        \dot \xi(t) 
        = D\iota(z(t)) \dot z(t)
        = D\iota(z(t)) f(z(t))
        = (\iota_*f)(\xi(t)),
        \quad \text{and} \quad
        \xi(0) = \iota(z_0).
    \end{equation}
    Thus $\xi(t)$ solves the real ODE with vector field $\iota_* f$ and initial data $\iota(z_0)$.
    By uniqueness of solutions, $\xi(t) = \Phi_t^{\iota_* f}(\iota(z_0))$, which is exactly \eqref{eq:flow-conjugacy}.
\end{proof}

Second, when $f \in \cC^\infty(\C^d;\C^d)$, for $t > 0$ and $\alpha \in \R$, real-time flows of $\alpha f$ and $f$ satisfy the compatibility relation $\Phi^{\alpha f}_t = \Phi^f_{\alpha t}$ by real time reparametrization.
If $f$ is holomorphic, its flow is also well defined for complex times (see \cite[Chapter I, Section 1A]{IlyashenkoYakovenko2008}) and this relation holds for $\alpha \in \C$ by complex time reparametrization.
In particular, the complex flows of the form $e^{\alpha t f}$ in our complex splitting methods can be unambiguously understood as one or the other.

If $f$ is not holomorphic, due to the possible lack of commutation of $f$ and $i f$ (recall \eqref{eq:[f,if]}), this relation may break.
For example, with $d = 1$ and $f(z) = \Re(z)$ as above, on the one hand $\Phi^{if}_t (z_0) = x_0 + i (y_0 + t x_0)$ and on the other hand $\Phi^f_t (z_0) = e^t x_0 + i y_0$. 
This is a real-analytic expression of $t$ which admits a holomorphic extension, albeit different from $\Phi^{if}_t(z_0)$.

\subsection{Reduction to the controllability of the Magnus system}
\label{sec:complex-reduction}

\cref{thm:main-complex-control} states an equivalence of the form STLC $\Leftrightarrow$ LARC, in which the forward implication is classical thus we only prove the reverse one. 
Thanks to \cref{prop:ZN=>both}, the key point is to prove the $\fD$-controllability in $\cL^\oslash_N(X)$ of the associated Magnus system
\begin{equation}
    \label{EDO_ZN_C}
    \dot{\cZ}_N(t) = [\cZ_N(t),X_0] + u(t) \Omega_{X_1}(\cZ_N(t)).
\end{equation}
where $u \in \fD$ is a finite sum of Dirac masses with complex-valued amplitudes.
More precisely, by \cref{prop:ZN=>both}, the following result entails both \cref{thm:main-complex-split,thm:main-complex-control}.

\begin{proposition}
    \label{Prop:ZN_controllable_complexe}
    For every $N \in \N^* $, system \eqref{EDO_ZN_C} is controllable.
\end{proposition}

\cref{Prop:ZN_controllable_complexe} can be deduced from a general result by Sussmann (see \cref{sec:C-Sussmann}).
Since this result is quite involved, we give below two easier self-contained direct proofs.
The first one, in \cref{subsec:Complex_chow}, is very natural for people from control theory because it mimics that of Chow's theorem (\cref{thm:main-Chow-control}).
The second one, in \cref{subsec:Complex_extension}, is perhaps more friendly for people from splitting methods; it is also closer to the other controllability proofs in this article.
Both contain an inversion argument (hidden in the proof of \cref{p:approx=exact} for the second one).

\subsection{A proof based on Sussmann's general result}
\label{sec:C-Sussmann}

With controls $u \in L^1(\R^+;\C)$, one can apply Sussmann's general theorem \cite[Theorem 2.4]{MR872457} to the control-affine system \eqref{EDO_ZN_C}.
Let $N \geq 2$ and $\xi := e^{\frac{2 i \pi}{N}}$.
Consider the morphism of algebra $\Lambda_\xi$ of \cref{Prop:S-hom}. 
With Sussmann's terminology $\Upsilon := \{ \Id , \Lambda_\xi, \dots , \Lambda_\xi^{N-1} \}$ is a ``finite group of input symmetries''.
The $\Upsilon$-fixed elements of $\cL(X)$ are spanned by the $b \in \Br(X)$ such that $n_1(b) \in N \N$, i.e.\ $b=X_0$ or $n_1(b) \in N \N^*$.
In any case, $g_b(0)=0$ in $\cL^\oslash_N(X)$, thus the neutralization condition holds. 
Moreover, since no compensation has to be done, there is no need to provide a one parameter group of dilations.
For $N = 1$, one can use $\Upsilon = \{ \Id, \Lambda_{-1} \}$.

\subsection{A proof similar to that of Chow's theorem}
\label{subsec:Complex_chow}

As for Chow's theorem, the proof uses a trajectory from $0$ to $0$ around which an inversion argument works. 
For Chow's theorem, one uses the time reversibility of the driftless system to build this trajectory. 
For the system with drift \eqref{EDO_ZN_C}, we need another argument, proved in \cref{Prop:a0}.

\begin{definition}[Concatenation]
    \label{Def:concat}
    For $u \in L^1((0,T);\C)$ and $v \in L^1((0,T');\C)$, we denote by $u \diamond v :(0,T+T') \to \C$ their concatenation
    \begin{equation}
        (u \diamond v) (t) =
        \begin{cases}
            u(t) \quad & \text{ if } t \in (0,T), \\
            v(t-T) & \text{ if } t \in (T , T+T').
        \end{cases}
    \end{equation}
    By extension, we also write $u \diamond v$ when $u,v \in \fD$ (and there is no ambiguity with the time intervals).
\end{definition}

\begin{definition}[Valuation]
    For $S \in \cA(X)$, define the valuation of $S$, $\val(S)$ as the largest integer $\ell \geq 0$ such that $S \in \cA^{\ell}(X) + \cA^{\ell+1}(X) + \dotsb$ (or, equivalently, $\pi_{\ell-1} S = 0$).
\end{definition}

\begin{proposition}
    \label{Prop:Concat}
    Let $N, \ell \in\N^*$ and, for $j=1,2$, $u_j \in \fD$ such that $\val (\cZ_N(T_j,X,u_j)) \geq \ell$.
    Then $\cZ_N(T_1+T_2,X,u_1 \diamond u_2)-\cZ_N(T_1,X,u_1)-\cZ_N(T_2,X,u_2)$ has valuation at least $\ell+1$.
\end{proposition}

\begin{proof}
    To simplify the notations, we omit $X$ in $S_N$ and $\cZ_N$. 
    By uniqueness of the solution to~\eqref{EDO_SN}, we have $S_N(T_1 + T_2, u_1 \diamond u_2) = S_N(T_1,u_1) S_N(T_2,u_2)$.
    By \eqref{SN=exp(ZN)}, this implies
    \begin{equation}
        \cZ_N(T_1+T_2,u_1 \diamond u_2)= \log \left( \exp(-T_2 X_0) \exp(\cZ_N(T_1,u_1)) \exp(T_2 X_0) \exp(\cZ_N(T_2,u_2)) \right).
    \end{equation}
    The BCH formula \eqref{eq:BCH} of \cref{prop:BCH} gives the conclusion.
\end{proof}

\begin{proposition}
    \label{Prop:a0}
    Let $N \geq 2$.
    For any $T > 0$ and $u \in \fD$, there exists $v \in \fD$ such that
    \begin{equation}
        \cZ_N(N^N T, X, u \diamond v) = 0.
    \end{equation}
\end{proposition}

\begin{proof}
    To simplify the notations, we omit the $X$ in $\cZ_N$.
    
    Let $\xi_k := e^{\frac{2 i k \pi}{N}}$.
    For any $u \in \fD$, by the homogeneity property of \cref{Prop:Magnus},
    \begin{equation}
        \cZ_N(T, \xi_k u) = \Lambda_{\xi_k} \cZ_N(T,u)
    \end{equation}
    where $\Lambda$ is defined in \cref{Prop:S-hom}.
    In particular, $\val \cZ_N(T,\xi_k u) = \val \cZ_N(T,u)$.
    Define
    \begin{equation}
        \Gamma(u) := u \diamond \xi_1 u \diamond \dotsb \diamond \xi_{N-1} u.   
    \end{equation}
    By \cref{Prop:Concat},
    \begin{equation}
        \val \Big[ \cZ_N(NT, \Gamma(u)) - \sum_{k=0}^{N-1} \cZ_N(T,\xi_k u) \Big] \geq \val \cZ_N(T,u) + 1.
    \end{equation}
    Moreover, by the properties of the roots of unity, $\Lambda_{\xi_0}+ \dotsb + \Lambda_{\xi_{N-1}} = 0$ on $\cL^\oslash_N(X)$.
    Indeed, when $N \geq 2$, for any $b \in \Br(X)$ with $n_1(b) \geq N$, $\pi_N \eval(b) = 0$.
    Hence,
    \begin{equation}
        \sum_{k=0}^{N-1} \cZ_N(T,\xi_k u) = \left(\sum_{k=0}^{N-1} \Lambda_{\xi_k}\right) \cZ_N(T,u) = 0.
    \end{equation}
    By induction, we obtain $\val \cZ_N(N^\ell T, \Gamma^\ell(u)) \geq \ell+1$, so that $\cZ_N(N^N T, \Gamma^N(u)) = 0$.
\end{proof}

Now, we can prove the controllability of \eqref{EDO_ZN_C}.

\begin{proof}[Proof of \cref{Prop:ZN_controllable_complexe}]
    Let $N \geq 2$ (since $\cZ_1 = \pi_1 \cZ_2$, the controllability of \eqref{EDO_ZN_C} for $N = 1$ will follow from the one of $\cZ_2$).
    Let $r$ be the dimension of the $\C$-vector space $\cL^\oslash_N(X)$.
    By \cref{Prop:LARC_Z} and \cref{p:access-C}, there exist $h_1,\dots h_{2r} \in \{g_0, g_1, i g_1\}$ and $\ft^0 \in (0,\infty)^{2r}$ such that $D\Phi(\ft^0)$ has rank $2r$, where
    \begin{equation}
        \Phi : 
        \begin{cases}
            (0,\infty)^{2r} & \to \cL^\oslash_N(X) \\
             \ft = (t_1, \dotsc, t_{2r}) & \mapsto e^{t_{2r} h_{2r}} \dotsb e^{t_1 h_1}(0).
        \end{cases}
    \end{equation}
    Let $\varepsilon>0$ be small enough so that the same property holds for the map $\Phi_\varepsilon(\ft) := e^{\varepsilon g_0} \Phi(\ft)$.
    These points can be seen as trajectories of \eqref{EDO_ZN_C} with ordered impulsive controls.
    More precisely, for any $\ft \in (0,\infty)^{2r}$, there exists a unique pair $(T^{\ft}, u^{\ft}) \in (0,\infty) \times \fD$ such that $\Phi_\varepsilon(\ft) = \cZ_N(T^{\ft},X,u^{\ft})$.
    Let $v^{\ft^0} \in \fD$ given by \cref{Prop:a0} so that $\cZ_N(N^N T^{\ft^0},X, u^{\ft^0} \diamond v^{\ft^0})=0$. 
    Then the map
    \begin{equation}
        \Psi : 
        \begin{cases}
            (0,\infty)^{2r} & \to \cL^\oslash_N(X) \\
            \ft = (t_1, \dotsc, t_{2r}) & \mapsto \cZ_N( T^{\ft} + (N^N-1)T^{\ft^0} , X , u^{\ft} \diamond v^{\ft^0})
        \end{cases}
    \end{equation}
    satisfies $\Psi(\ft^0) = 0$ and $D\Psi(\ft^0)$ is surjective.
    Thus $\Psi$ is a local diffeomorphism at $\ft^0$. 
    Since $\ft^0$ and $\varepsilon$ can be taken arbitrarily small, this proves the small-time local exact $\fD$-controllability of~\eqref{EDO_ZN_C}.
\end{proof}

\subsection{A proof based on an extension argument}
\label{subsec:Complex_extension}

We give a proof relying on the strategy presented in \cref{subsec:extended_syst}.
Let $m\in\N^*$ and $f_0,\dotsc,f_m$ be holomorphic vector fields on $\C^d$ such that $f_0(0)=0$. 
We consider system \eqref{eq:syst-f0-f1-fm}.

\bigskip
\bigskip

\begin{proposition}
    \label{Prop:ext_C}
    Assume that:
    \begin{itemize}
        \item the flows associated with the $f_j$ are globally defined;
        \item the Lie algebra $\Lie \left\{ f_0, f_1, \dotsc, f_m \right\}$ is nilpotent of step $R \in \N^*$.
    \end{itemize}
    System \eqref{eq:syst-f0-f1-fm} is small-time locally approximately $\fD$-controllable if and only if the extended system
    \begin{equation}
        \label{eq:syst-f0-f1-fm-plus-f1f0}
        \dot{x} = f_0(x) + u_1 f_1(x) + \dotsb + u_m f_m(x) + u_{m+1} [f_1,f_0](x)
    \end{equation}
    is small-time locally approximately $\fD$-controllable.
\end{proposition}

\begin{proof}
    One implication is immediate: if \eqref{eq:syst-f0-f1-fm} is small-time locally approximately $\fD$-controllable, then so is the extended system \eqref{eq:syst-f0-f1-fm-plus-f1f0}, since the latter reduces to the former when $u_{m+1}=0$.

    We prove the converse implication. 
    It is enough to treat the case $m=1$ and show that an extra flow $e^{\beta [f_1,f_0]}$, $\beta\in\C$, can be approximated arbitrarily well by $\fD$-controlled trajectories of~\eqref{eq:syst-f0-f1-fm}.
    
    Let $\varepsilon>0$. 
    We define $u_{\varepsilon} := \frac{\beta}{\varepsilon} \delta_{t=0} - \frac{\beta}{\varepsilon} \delta_{t=\varepsilon}$.
    Then, the solution map of system \eqref{eq:syst-f0-f1-fm} satisfies
    \begin{equation}
        \label{x=eZ0}
        x(\varepsilon;f, u_{\varepsilon},x_0)=
        e^{- \frac{\beta}{\varepsilon} f_1} e^{\varepsilon f_0}
        e^{+ \frac{\beta}{\varepsilon} f_1} (x_0) =
        e^{H_0^\varepsilon} (x_0)
    \end{equation}
    where, by \cref{Lem:conjug_flots},
    \begin{equation}
        \label{Z0=}
        H_0^\varepsilon = \varepsilon f_0 + \beta [f_1,f_0] + \sum_{k=2}^{R-1}
        \frac{\beta^k}{k! \varepsilon^{k-1}} \ad_{f_1}^k(f_0).
    \end{equation}
    Fix coefficients $\alpha_1, \dotsc, \alpha_R \in \C$ such that
    \begin{equation}
        \label{hyp_alpha_k}
        \sum_{j=1}^R \alpha_j = 1
        \qquad \text{ and } \qquad
        \forall k \in \intset{2,R}, \quad
        \sum_{j=1}^R \alpha_j^k = 0.
    \end{equation}
    For example, using symmetric sums and Newton's identities, one can consider the roots of the polynomial $p(z) := \sum_{r=0}^R (-1)^r z^{R-r} / r!$.
    For a given control $u \in \fD$, we define
    \begin{equation}
        \Gamma(u) := \alpha_1 u \diamond \dotsb \diamond \alpha_R u.
    \end{equation}
    For $n, k \geq 0$, let $\mathcal{V}_{n,k}$ be the span of all iterated Lie brackets in $f_0$ and $f_1$ with exactly $k$ occurrences of $f_1$ and total length $>n+k$.
    We claim that, for every $n \geq 0$,
    \begin{equation}
        \label{eq:proof-extC-induction}
        x(R^n\varepsilon;f,\Gamma^n(u_\varepsilon),x_0) = e^{H_n^\varepsilon}(x_0)
        \quad \text{where} \quad
        H_n^\varepsilon
        = \beta [f_1,f_0]
        +\sum_{k=2}^R \Xi_{n,k}^\varepsilon
        +O(\varepsilon),
    \end{equation}
    where $\Xi_{n,k}^\varepsilon\in \mathcal{V}_{n,k}$ and $O(\varepsilon)$ is a finite linear combination of brackets whose coefficients are $O(\varepsilon)$.

    The case $n=0$ follows from \eqref{Z0=}.
    Assume now that \eqref{eq:proof-extC-induction} holds for some $n \geq 0$.
    By homogeneity, for every $\alpha\in\C$,
    \begin{equation}
        x(R^n\varepsilon;f,\alpha \Gamma^n(u_\varepsilon),x_0)
        =
        e^{H_n^\varepsilon(\alpha)}(x_0)
        \quad \text{where} \quad 
        H_n^\varepsilon(\alpha)
        =
        \alpha \beta [f_1,f_0]
        +\sum_{k=2}^R \alpha^k \Xi_{n,k}^\varepsilon
        +O(\varepsilon).
    \end{equation}
    By definition of $\Gamma$,
    \begin{equation}
        x(R^{n+1}\varepsilon;f,\Gamma^{n+1}(u_\varepsilon),x_0)
        =
        e^{H_n^\varepsilon(\alpha_R)}
        \cdots
        e^{H_n^\varepsilon(\alpha_1)}(x_0).
    \end{equation}
    Let $H_{n+1}^\varepsilon$ be the $\BCH_R$ logarithm given by \cref{prop:BCH}.
    By \eqref{eq:BCH} and \eqref{hyp_alpha_k}, its linear part is
    \begin{equation}
        \beta\Bigl(\sum_{j=1}^R \alpha_j\Bigr)[f_1,f_0]
        + \sum_{k=2}^R \Bigl(\sum_{j=1}^R \alpha_j^k\Bigr)\Xi_{n,k}^\varepsilon = \beta [f_1, f_0].
    \end{equation}
    Any other $\BCH_R$ term is a bracket involving at least two factors among the $H_n^\varepsilon(\alpha_j)$. 
    Since
    \begin{equation}
        [[f_1,f_0],\mathcal{V}_{n,k}] \subset \mathcal{V}_{n+1,k+1},
        \qquad
        [\mathcal{V}_{n,k},\mathcal{V}_{n,\ell}] \subset \mathcal{V}_{2n,k+\ell},
        \qquad 
        [\mathcal{V}_{n,k}, O(\varepsilon)] \subset \underset{k' \geq k}{\oplus} \mathcal{V}_{n+1,k'},
    \end{equation}
    all these terms belong to $\oplus_{k \geq 2} \mathcal{V}_{n+1,k} + O(\varepsilon)$, which proves the induction step.

    Taking $n=R-2$ and using the step $R$ nilpotency, we obtain $H_{R-2}^\varepsilon=\beta [f_1,f_0]+O(\varepsilon)$.
    Moreover, the total time is $R^{R-2}\varepsilon\to0$, which concludes the proof.
\end{proof}

Now, we can prove the controllability of \eqref{EDO_ZN_C}.

\begin{proof}[Proof of \cref{Prop:ZN_controllable_complexe}]
    Let $g_0, g_1$ be as in \cref{Prop:LARC_Z}.
    We want to prove the controllability of
    \begin{equation}
        \label{syst_C_A}
        \dot{\cZ}_N = g_0(\cZ_N) +
        \sum_{b \in \cE} u_b(t) g_b(\cZ_N)
    \end{equation}
    with $\cE=\{X_1\}$.
    By \cref{Prop:ext_C}, it suffices to prove the $\fD$-controllability of the system \eqref{syst_C_A} with $\cE=\{ X_1, [X_1,X_0] \}$. 
    Iterating the argument, it suffices to prove the $\fD$-controllability of the system \eqref{syst_C_A} with $\cE=\{M_0, M_1, \dots,M_{N-1}\}$. 
    By \cref{p:enrich-f1f2}, it suffices to prove that this system is $\fD$-controllable with $\cE$ the set of iterated brackets of length at most $N$ of the elements of $\{X_1, M_1, \dots,M_{N-1}\}$. 
    By \cref{def:oslash}, they span $\cL^\oslash_N(X)$. 
    Moreover, by \cref{Prop:LARC_Z}, for every $b \in \cL^\oslash_N(X)$, $g_b(0)=b$.
    Thus the Lie algebra rank condition is satisfied and, by \cref{Lem_gen}, this last extended system is $\fD$-controllable.
\end{proof}

\section{High order methods using commutator flows}
\label{sec:high_order_comm}

In this section, the base field of all vector spaces is $\K=\R$.
Our goal is to prove \cref{thm:Control_2N} and the existence part of \cref{thm:max-N}.
By \cref{prop:ZN=>both}, these are consequences of the following statement on the controllability of the Magnus system:

\begin{proposition}
    \label{Prop:Z2N-D-cont}
    Let $N \in \N^*$ and $\cE=\{X_1,W_1,\dots,W_{N-1}\}$, then the Magnus system
    \begin{equation}
        \label{eq:Z2N}
        \dot{\cZ}_{2N}(t) = [\cZ_{2N},X_0] + \sum_{b \in \cE} u_b(t) \Omega_b(\cZ_{2N}(t))
    \end{equation}
    set on $\cL^\oslash_{2N}(X)$, is $\fD$-controllable.
\end{proposition}

\subsection{An extension argument}

Our proof of \cref{Prop:Z2N-D-cont} relies on an extension argument, as introduced in \cref{subsec:extended_syst}. 
Let $f_0, f_1, \dotsc, f_m \in \cC^\infty(\R^d;\R^d)$ with $f_0(0) = 0$ and consider the control-affine system \eqref{eq:syst-f0-f1-fm}.

\begin{proposition}
    \label{p:enrich-f0f1}
    Assume that:
    \begin{itemize}
        \item the flows associated with the $f_j$ are globally defined;
        \item the Lie algebra $\Lie \left\{ f_0, f_1, \dotsc, f_m \right\}$ is nilpotent of step $R \in \N^*$;
        \item $\ad_{f_1}^2(f_0)=0$ or $\ad_{f_1}^2(f_0) \in \{f_1,\dots,f_m\}$.
    \end{itemize}
    Then system \eqref{eq:syst-f0-f1-fm} is small-time locally
    approximately $\fD$-controllable iff the extended system
    \begin{equation}
        \label{eq:syst-extended-f1f0}
        \dot{x}=f_0(x)+u_1 f_1(x)+\dots+u_m f_m(x)+u_{m+1} [f_1,f_0](x)
    \end{equation}
    is small-time locally approximately $\fD$-controllable.
\end{proposition}

\begin{proof}
    One implication is immediate: if \eqref{eq:syst-f0-f1-fm} is small-time locally approximately $\fD$-controllable, then so is the extended system \eqref{eq:syst-extended-f1f0}, since the latter reduces to the former when $u_{m+1}=0$.

    We prove the converse implication. 
    It is enough to show that an extra flow $e^{\alpha [f_1,f_0]}$, $\alpha\in\R$, can be approximated arbitrarily well by $\fD$-controlled trajectories of \eqref{eq:syst-f0-f1-fm}.
    Since $e^{\varepsilon f_0+\alpha [f_1,f_0]} = e^{\alpha [f_1,f_0]}+O(\varepsilon)$ as $\varepsilon \to 0$, it is enough to approximate $e^{\varepsilon f_0+\alpha [f_1,f_0]}$ for arbitrarily small $\varepsilon>0$.

    \medskip
    
    \noindent\emph{First case: $\ad_{f_1}^2(f_0)=0$.}
    By \cref{Lem:conjug_flots}, for any $\varepsilon > 0$,
    \begin{equation}
        e^{- \frac{\alpha}{\varepsilon}f_1}e^{\varepsilon f_0}e^{+ \frac{\alpha}{\varepsilon}f_1}
        =
        e^{\varepsilon f_0+\alpha [f_1,f_0]}.
    \end{equation}
    Hence $e^{\varepsilon f_0+\alpha [f_1,f_0]}$ is exactly realized by the original system with the ordered impulsive control $u = ((0, \alpha/\varepsilon, 1), (\varepsilon, -\alpha/\varepsilon, 1))$.

    \medskip
    \noindent\emph{Second case: $\ad_{f_1}^2(f_0)=f_j$ for some $j\in\{1,\dots,m\}$.}
    Set $g_k := \ad_{f_1}^k (f_0)$ for $k \geq 0$.
    Since $\Lie \left\{f_0,\dots,f_m\right\}$ is nilpotent of step $R$, we have $g_k=0$ for $k > R$.
    Moreover, by \cref{p:enrich-f1f2}, we may assume that $g_k \in \{ f_1, \dotsc, f_m \}$ for $2 \leq k \leq R$.

    We first note that for any $\beta_2,\dots,\beta_R\in\R$, the flow $e^{\varepsilon f_0+\beta_2 g_2+\cdots+\beta_R g_R}$ can be approximated arbitrarily well, in time $\varepsilon$, by $\fD$-controlled trajectories of \eqref{eq:syst-f0-f1-fm}. 
    Indeed, each $g_k$ with $k \geq 2$ is one of the control vector fields, so the Lie--Trotter formula \cref{lem:LT} gives
    \begin{equation}
        \left(
        e^{\frac{\varepsilon}{N} f_0}
        e^{\frac{\beta_2}{N} g_2}
        \dotsb
        e^{\frac{\beta_R}{N}g_R}
        \right)^N
        \longrightarrow
        e^{\varepsilon f_0+\beta_2 g_2+\cdots+\beta_R g_R}
        \quad 
        \text{as } N \to \infty.
    \end{equation}
    It therefore remains to choose $\beta_2,\dots,\beta_R$ so that conjugation by $e^{\frac \alpha \varepsilon f_1}$ produces exactly $e^{\varepsilon f_0+\alpha [f_1,f_0]}$.
    For $k=2,\dots,R$, choose $\beta_k$ recursively so that
    \begin{equation}
        \label{eq:beta-recursive}
        \frac{\alpha^k}{k! \varepsilon^{k-1}}+\sum_{l=0}^{k-2}\frac{\alpha^l}{l!\varepsilon^l}\beta_{k-l}
        =0.
    \end{equation}
    This is a triangular linear system, hence it has a unique solution.
    By \cref{Lem:conjug_flots}, 
    \begin{equation}
        e^{- \frac{\alpha}{\varepsilon}f_1}
        e^{\varepsilon f_0+\sum_{r=2}^R \beta_r g_r}
        e^{+ \frac{\alpha}{\varepsilon}f_1} = e^F,
        \quad \text{where} \quad
        F := \sum_{l=0}^R \frac{\alpha^l}{l! \varepsilon^l} \ad_{f_1}^l\left(\varepsilon f_0+\sum_{r=2}^R\beta_r g_r\right).
    \end{equation}
    Since $\ad_{f_1}^l(f_0)=g_l$ and $\ad_{f_1}^l(g_r)=g_{r+l}$, this becomes
    \begin{equation}
        F = \varepsilon f_0+\alpha [f_1,f_0]
        +
        \sum_{k=2}^R
        \left(
        \frac{\alpha^k}{k!\,\varepsilon^{k-1}}
        +\sum_{l=0}^{k-2}\frac{\alpha^l}{l!\,\varepsilon^l}\beta_{k-l}
        \right) g_k.
    \end{equation}
    By \eqref{eq:beta-recursive}, all coefficients of $g_k$ for $k \geq 2$ vanish, so $F = \varepsilon f_0 + \alpha [f_1, f_0]$. 
    Thus $e^{\varepsilon f_0+\alpha [f_1,f_0]}$ can be approximated arbitrarily well by $\fD$-controlled trajectories of \eqref{eq:syst-f0-f1-fm}.
\end{proof}

\subsection{Proof of the controllability of the Magnus system}

\begin{proof}[Proof of \cref{Prop:Z2N-D-cont}]
    Let $g_0, g_1$ be the vector fields on $\cL^\oslash_{2N}(X)$ defined by $g_0(Z)=[Z,X_0]$ and $g_1(Z)=\Omega_{X_1}(Z)$. 
    Then $g_b = \Omega_b$ for every $b \in \Br(X) \setminus \{X_0\}$ (see \cref{Prop:LARC_Z}) thus $g_b=0$ for every $b \in \Br(X)$ with $|b|>2N$ (see \cref{def:Omega_b}).
    In particular, $\Lie \left\{ g_0, g_1 \right\}$ is nilpotent of step $2N$.

    We define $\cE_0 := \{ X_1, W_1, \dotsc, W_{N-1} \}$. 
    If $N=1$ then $g_{W_1}=0$. 
    If $N \geq 2$, $\cE_0$ contains $W_1$. 
    Thus, by \cref{p:enrich-f0f1}, it suffices to prove that this system is $\fD$-controllable with $\cE = \cE_0 \cup \{ (X_1, X_0) \}$.
    Iterating the argument, it suffices to prove that this system is $\fD$-controllable with $\cE = \cE_0 \cup \{ M_1, \dotsc, M_{2N-1} \}$.
    By \cref{p:enrich-f1f2}, it suffices to prove that this system is $\fD$-controllable with $\cE$ the set of iterated brackets of length at most $2N$ of elements of $\cE_0 \cup \{ M_1, \dotsc, M_{2N-1} \}$. 
    By \cref{def:oslash}, they span $\cL^\oslash_{2N}(X)$. 
    Moreover, by \cref{Prop:LARC_Z}, for every $b \in \cL^\oslash_{2N}(X)$, $g_b(0)=b$.
    Thus the Lie algebra rank condition is satisfied, and, by \cref{Lem_gen}, this last extended system is $\fD$-controllable.
\end{proof}

\section{Order restrictions for signed real-valued methods}
\label{sec:max-order}

In this section the base field is $\K=\R$. 
We prove the upper bound of \cref{thm:max-N}, using a representation of the solutions $S_N(t,X,u)$ to \eqref{EDO_SN} as a product over an ordered basis, involving the \emph{coordinates of the second kind} (see \cite{OwrenMarthinsen2001} for an application to integration methods). 

\subsection{Representation of the state as a product}
\label{sec:wei-norman}

Let $N \in \N$ and $\cB$ be an ordered basis of $\cL_N(X)$.
For $b \in \cB$, let $b^*$ be the dual basis element, $V_b^+ := \vect \{ c \in \cB \mid c > b \}$ and $\mathbb{P}_b^+$ the projection on $V_b^+$ (parallel to $\vect \{ c \in \cB \mid c \leq b \}$).

\begin{definition}
    \label{def:triangular}
    We say that $\cB$ is \emph{triangular} when, for all $b < c \in \cB$, one has $[b,c] \in V_b^+$.
\end{definition}

\begin{proposition}
    \label{prop:chevalley}
    If the basis $\cB = \{ b_1 < \dotsb < b_\ell \}$ is triangular, then the following map is a global analytic diffeomorphism of $\R^\ell$ onto the submanifold $G_N(X) := \exp (\cL_N(X))$ of $\cA_N(X)$:
    \begin{equation}
        \label{eq:glob-diff}
        (\xi_b)_{b \in \cB} \mapsto \exp(\xi_{b_\ell} b_\ell) \dotsb \exp(\xi_{b_1} b_1).
    \end{equation}
    The $\xi_b$ are called \emph{coordinates of the second kind}.
\end{proposition}

\begin{proof}
    It is a particular case of \cite{Chevalley1941} or \cite[Theorem 3.18.11]{Varadarajan1984} for solvable Lie groups.
    
    Since $\cL_N(X)$ is nilpotent, $\exp$ and $\log$ are globally-defined polynomial inverses between $\cL_N(X)$ and $G_N(X)$, and the $\BCH_N$ polynomial of \cref{prop:BCH} is finite. 

    \emph{Surjectivity}.
    Let $S = \exp Z_1 \in G_N(X)$.
    Define $\xi_{b_k}$ and $Z_{k+1}$ for $k = 1, \dotsc, \ell$, by the recursion
    \begin{equation}
        \xi_{b_k} := \langle b_k^*, Z_k \rangle
        \quad \text{and} \quad 
        Z_{k+1} := \BCH(Z_k, -\xi_{b_k} b_k).
    \end{equation}
    Since $\cB$ is triangular, $Z_{k+1} \in V_{b_k}^+$.
    In particular, $Z_{\ell+1} = 0$ and $S = \exp (\xi_{b_\ell} b_\ell) \dotsb \exp (\xi_{b_1} b_1)$.

    \emph{Injectivity}.
    Given $S = \exp(\xi_{b_\ell} b_\ell) \dotsb \exp(\xi_{b_1} b_1) = \exp(\xi_{b_\ell}' b_\ell) \dotsb \exp(\xi_{b_1}' b_1)$, the same recursion iteratively proves that $\xi_{b_k} = \xi_{b_k}'$ for $k = 1, \dotsc, \ell$.

    \emph{Regularity}.
    Since $\cL_N(X)$ is nilpotent, the forward map is polynomial.
    The inverse map is obtained by finitely many compositions of $\log$ and $\BCH_N$, hence is polynomial too.
\end{proof}

Thus, it makes sense to represent the state $S_N(t,X,u)$ as a product of the form \eqref{eq:glob-diff}, where the coefficients $\xi_b$ will depend on $t$ and $u$.
As a counterpart to Magnus' expansion of \cite{Magnus54ote} representing the solution as a single exponential, Wei and Norman proved in \cite{WeiNorman1964} the following result.

\begin{proposition}
    Let $N \in \N$ and $\cB = \{ b_1 < \dotsb < b_\ell \}$ be an ordered basis of $\cL_N(X)$.

    For $\cE \subset \Br(X) \setminus \{ X_0 \}$ and $u = (u_b)_{b \in \cE} \in L^1(\R^+;\R^{|\cE|})$, the solution to \eqref{EDO_SN} is given by
    \begin{equation}
        \label{eq:SN=ProdExp}
        S_N(t,X,u) = \exp(\xi_{b_\ell}(t,u) b_\ell) \dotsb \exp(\xi_{b_1}(t,u) b_1)
    \end{equation}
    if and only if the coordinates $(\xi_b)_{b \in \cB}$ satisfy $\xi_b(0,u) = 0$ and the following relation in $\cL_N(X)$:
    \begin{equation}
        \label{eq:WeiNorman}
        \sum_{k = 1}^\ell \dot{\xi}_{b_k} \left( \prod_{j = 1}^{k-1} \exp(-\xi_{b_j} \ad_{b_j}) \right) b_k = X_0 + \sum_{b \in \cE} u_b b.
    \end{equation}
\end{proposition}

\begin{proof}
    Let $P(t)$ denote the right-hand side of \eqref{eq:SN=ProdExp}.
    First, $P(0) = 1$ if and only if $\xi_b(0,u) = 0$ for all $b \in \cB$.
    Moreover, explicitly differentiating in time, we obtain
    \begin{equation}
        \dot{P} = P \sum_{k=1}^\ell \left( \prod_{j=1}^{k-1} \exp (-\xi_{b_j} b_j) \right) \dot{\xi}_{b_k} b_k \left( \prod_{j=k-1}^{1} \exp (\xi_{b_j} b_j) \right).
    \end{equation}
    By the conjugation \cref{lem:conjugation}, $P$ and $S_N$ satisfy the same ODE if and only if \eqref{eq:WeiNorman} holds.
\end{proof}

The relations \eqref{eq:WeiNorman} are of the form $M(\xi) \dot{\xi} = U$ where $U = X_0 + \sum u_b b$ is given.
Since $M$ depends polynomially on $\xi$ and $M(0) = \Id$, one can always solve \eqref{eq:WeiNorman} locally in time \cite[Theorem~1]{WeiNorman1964}.
Under a \emph{stronger} triangularity assumption of the form $[b,c] \in V_c^+$ for all $b < c \in \cB$, $M$ is triangular, and system \eqref{eq:WeiNorman} can be directly inverted globally \cite[Theorem 2]{WeiNorman1964}.

In fact, our triangularity assumption \cref{def:triangular} is sufficient to solve \eqref{eq:WeiNorman} globally.

\begin{proposition}
    Let $N \in \N$ and $\cB = \{ b_1 < \dotsb < b_\ell \}$ be a triangular basis of $\cL_N(X)$.

    For $\cE \subset \Br(X) \setminus \{ X_0 \}$ and $u = (u_b)_{b \in \cE} \in L^1(\R^+;\R^{|\cE|})$, the solution to \eqref{eq:WeiNorman} is explicitly given by
    \begin{equation}
        \label{eq:xib-multi-bis}
        \xi_b(t,u) = \sum
        \left\langle b^*, \ad_{a_r}^{m_r} \mathbb{P}_{a_r}^+ \dotsb \ad_{a_1}^{m_1} \mathbb{P}_{a_1}^+ (c) \right\rangle
        \int_0^t \frac{\xi_{a_r}^{m_r}(s,u) \dotsb \xi_{a_1}^{m_1}(s,u)}{m_r! \dotsb m_1!} u_c(s) \dd s,
    \end{equation}
    summing over $r \in \N$, $a_1 < \dotsb < a_r < b \in \cB$, $m_1, \dotsc, m_r \in \N^*$ and $c \in \cE \cup \{X_0\}$, with $u_{X_0} \equiv 1$.
\end{proposition}

\begin{proof}
    Set $U_1(t) := X_0 + \sum_{b \in \cE} u_b(t) b$.
    For $k = 1, \dotsc, \ell$, we define recursively $\xi_{b_k}$ and $U_{k+1}$ by
    \begin{equation}
        \label{eq:rec-xi-U}
        \dot{\xi}_{b_k} := \langle b_k^*, U_k \rangle
        \quad \text{and} \quad 
        U_{k+1} := \exp(\xi_{b_k} \ad_{b_k}) \mathbb{P}_{b_k}^+ U_k.
    \end{equation}
    Since $\cB$ is triangular, $U_{k+1} \in V_{b_k}^+$.
    By induction, \eqref{eq:WeiNorman} is equivalent to the reduced system
    \begin{equation}
        \label{eq:WeiNorman_Uk}
        \sum_{i=k}^\ell \dot{\xi}_{b_i} \left( \prod_{j=k}^{i-1} \exp(-\xi_{b_j} \ad_{b_j}) \right) b_i = U_k.
    \end{equation}
    For $k=1$ this is exactly \eqref{eq:WeiNorman}. 
    For $k > 1$, the $i=k$ term is $\dot\xi_{b_k}b_k$, while the $i>k$ terms are in $V_{b_k}^+$. 
    Thus, pairing \eqref{eq:WeiNorman_Uk} with $b_k^*$ yields $\dot\xi_{b_k}=\langle b_k^*,U_k\rangle$, and  applying $\exp(\xi_{b_k}\ad_{b_k})\mathbb P_{b_k}^+$ to
    \eqref{eq:WeiNorman_Uk} removes the $i=k$ term and gives \eqref{eq:WeiNorman_Uk} with $k$ replaced by $k+1$ and $U_{k+1}$ as in \eqref{eq:rec-xi-U}.
    
    Finally, unfolding the recurrence relation for $U_k$ gives
    \begin{equation}
        U_k = \exp(\xi_{b_{k-1}} \ad_{b_{k-1}}) \mathbb{P}^+_{b_{k-1}} \dotsb \exp(\xi_{b_1} \ad_{b_1}) \mathbb{P}^+_{b_1} U_1.
    \end{equation}
    Expanding each exponential and using $\xi_{b_k}(0) = 0$ yields the explicit global formula \eqref{eq:xib-multi-bis}.
\end{proof}

\subsection{Generalization to impulsive controls and splitting methods}

In \cref{sec:wei-norman}, we worked with $L^1$ controls.
We now extend our definitions to $\fD$ controls, and use them to derive a characterization of a splitting method of order $N$.

\begin{definition}
    \label{def:xib-impulsive}
    Let $N \in \N^*$ and $\cB$ a triangular basis of $\cL_N(X)$.
    Let $\cE \subset \Br(X) \setminus \{ X_0 \}$.
    Given $u \in \fD$ and $t > 0$, define 
    \begin{equation}
        \big( \xi_b(t,u) \big)_{b \in \cB} := \Theta^{-1}(S_N(t,X,u)),
    \end{equation}
    where $\Theta$ is the diffeomorphism of \cref{prop:chevalley}.
    For $u \in \fD$, $\xi_b(\cdot,u)$ is càdlàg.
\end{definition}

\begin{proposition}
    \label{prop:split-xi}
    Let $N \in \N^*$ and $\cB$ a triangular basis of $\cL_N(X)$.
    Let $\cE \subset \Br(X) \setminus \{ X_0 \}$ containing $X_1$.
    The following statements are equivalent:
    \begin{enumerate}
        \item \label{it:prop-split-xi-1} 
        there exists an $(\R^+,\R)$ splitting method of order $N$ involving $X_0$ and the elements of $\cE$,
        \item \label{it:prop-split-xi-2} 
        there exists $u=(u_b)_{b \in \cE} \in \fD$ such that, for all $b \in \cB$, $\xi_b(1,u)=\xi_b(1,\underline{u})$, where $\underline{u}=(\underline{u}_b)_{b \in \cE}$, $\underline{u}_{X_1} = 1$ and $\underline{u}_b=0$ for $b \in \cE \setminus \{X_1\}$.
    \end{enumerate}
\end{proposition}

\begin{proof}
    By \cref{prop:splitting-iff}, \cref{it:prop-split-xi-1} is equivalent to the existence of $u \in \fD$ such that $S_N(1,X,u) = \exp (X_0 + X_1) = S_N(1,X,\underline{u})$, which is equivalent to \cref{it:prop-split-xi-2} by the injectivity of \cref{prop:chevalley}.
\end{proof}

\subsection{Link with Hall sets and Sussmann's product}

Unlike $\cA(X)$ -- whose monomials form a basis -- there is no canonical basis of $\cL(X)$. 
Hall sets (stemming from \cite{Hall1950} and generalized in \cite{Shirshov1962,Viennot1978}) constitute a wide family of bases of $\cL(X)$, which includes many well-known bases of $\cL(X)$ such as the historical length-compatible Hall bases of \cite{Hall1950} or the Chen--Fox--Lyndon basis of \cite{ChenFoxLyndon1958}. 
We refer the interested reader to \cite[Section 1.4]{A1} for a more gentle introduction and more thorough details (see also \cite[Chapter 4]{Reutenauer1993}).

\begin{definition}
    \label{def:Hall}
    A \emph{Hall set} is a subset $\cB$ of $\Br(X)$, containing $X$, with a total order $<$ such that
    \begin{itemize}
        \item for all $a, b \in \Br(X)$, $(a, b) \in \cB$ iff $a < b \in \cB$ and, either $b \in X$ or $b=(b', b'')$ with $b' \leq a$,
        \item for all $a, b \in \cB$ such that $(a,b) \in \cB$, one has $a < (a,b)$.
    \end{itemize}
\end{definition}

\begin{proposition}
    \label{prop:Hall-triangular}
    If $\cB$ is a Hall set, then $\eval(\cB)$ is a triangular basis of $\cL(X)$.
\end{proposition}

\begin{proof}
    By \cite[Theorem 1.2]{Viennot1978}, $\eval(\cB)$ is a basis of $\cL(X)$.
    The triangularity comes from the axioms of a Hall set and the associated decomposition algorithm of $[b,c]$ on $\eval(\cB)$ (see \cite[Theorem 2.1]{A1})
\end{proof}

Given a Hall set $\cB$ and $N \in \N$, one can consider $\cB_N := \{ b \in \cB ; |b| \leq N \}$.
Then, up to identifying brackets with their evaluation, $\cB_N$ is a (triangular) basis of $\cL_N(X)$.

\medskip

Using an iterative proof based on Lazard's elimination, as in \cref{sec:wei-norman}, Sussmann proved in~\cite{Sussmann1986} the following representation result (see \cite[Theorem 64]{P1} for the proof for non-length-compatible Hall sets) for the solution to \eqref{EDO_SN} in the case $\cE = \{ X_1 \}$.
In this particular case, the inductive formula \eqref{eq:xib-sussmann} for the coordinates of the second kind has a very nice structure which has proven fruitful for control theory (see \cite[Section 1.6]{P2}).

\begin{proposition}
    Let $N \in \N$ and $\cB$ a Hall set.
    The solution to $\dot{S}_N = S_N (X_0 + u X_1)$ for a given $u \in L^1(\R^+;\R)$ satisfies
    \begin{equation}
        S_N(t,X,u) = \underset{b \in \cB_N}{\overleftarrow{\prod}} \exp \left( \xi_b(t,u) b \right)
    \end{equation}
    where for any $b \in \cB_N$, decomposing $b = \ad_{a_r}^{m_r} \dotsb \ad_{a_1}^{m_1} (X_j)$, one has (with $u_{X_0} \equiv 1$),
    \begin{equation}
        \label{eq:xib-sussmann}
        \xi_{b}(t,u) = \int_0^t \frac{\xi_{a_r}^{m_r}(s,u) \dotsb \xi_{a_1}^{m_1}(s,u)}{m_r! \dotsb m_1!} u_{X_j}(s) \dd s.
    \end{equation}
\end{proposition}

\begin{proof}
    By \cref{prop:Hall-triangular}, $\cB_N$ is a triangular basis of $\cL_N(X)$ so \cref{sec:wei-norman} applies and it suffices to check that the general formula \eqref{eq:xib-multi-bis} reduces to \eqref{eq:xib-sussmann} for the case of a Hall basis and $\cE = \{ X_1 \}$.

    On the one hand, by \cref{def:Hall}, for any $b \in \cB$, there exists a unique maximal factorization $b = \ad_{a_r}^{m_r} \dotsb \ad_{a_1}^{m_1}(X_j)$ where $r \in \N$, $a_1 < \dotsb < a_r \in \cB$, $m_1, \dotsc, m_r \in \N^*$ and $j \in \{0,1\}$.

    On the other hand, by \cref{def:Hall} given any $r \in \N$, $a_1 < \dotsb < a_r \in \cB$, $m_1, \dotsc, m_r \in \N^*$ and $j \in \{0,1\}$, the term
    \begin{equation}
        \ad_{a_r}^{m_r} \mathbb{P}_{a_r}^+ \dotsb \ad_{a_1}^{m_1} \mathbb{P}_{a_1}^+ (X_j)
    \end{equation}
    either vanishes in $\cL(X)$ or belongs to $\cB$.

    Thus \eqref{eq:xib-sussmann} is the only non-vanishing term in \eqref{eq:xib-multi-bis}.
\end{proof}

\subsection{Choice of a good basis and computation of the coordinates}

Designing a splitting method for the flow of $f_0+f_1$ that uses only forward flows of $f_0$ (but both forward and backward flows of $f_1$) is an asymmetric question. 
This asymmetry appears in the associated control-affine system \eqref{eq:x-f0f1} and must be reflected when selecting a basis of $\cL(X)$.
In \cite[Section 3]{P2}, we constructed a Hall set $\cB^\star$ tailored to control applications, capturing this asymmetry.
We will not need its full definition here, only the following fact.

\begin{proposition}
    \label{p:Bstar}
    There exists a Hall set $\cB^\star$ containing the elements of \eqref{def:Mnu-Wj}, as well as the $W_{j,\nu} := ((\dotsb(W_j, X_0), \dotsc), X_0)$ with $X_0$ appearing $\nu \geq 1$ times, ordered as follows
    \begin{equation}
        \label{eq:order-Bstar}
        X_1 = M_0 < \cdots < M_{\nu} < M_{\nu+1} < \dots < W_j < W_{j,\nu} < W_{j+1} < \cdots < X_0.
    \end{equation}
    The evaluations of these elements in $\cL(X)$ form a basis of $\vect \{ \eval(b) \mid b \in \Br(X), n_1(b) \leq 2 \}$.
\end{proposition}

\begin{proposition}
    For $\cE \subset \cB^\star \setminus \{X_0\}$ and $u = (u_b)_{b \in \cE} \in L^1$, the associated coordinates of the second kind satisfy
    \begin{equation}
        \label{eq:xi-Bstar}
        \begin{split}
            & \dot{\xi}_{X_1} = u_{X_1}, \quad 
            \dot{\xi}_{M_\nu} = \xi_{M_{\nu-1}} + u_{M_\nu}, \quad 
            \dot{\xi}_{X_0} = 1, \\
            & \dot{\xi}_{W_j} = \frac 1 2 \xi_{M_{j-1}}^2 + u_{W_j} + \sum_{i=0}^{j-1} (-1)^{j-1-i} \xi_{M_i} u_{M_{2j-1-i}},
        \end{split}
    \end{equation}
    for $\nu, j \geq 1$ with the convention that $u_b = 0$ when $b \notin \cE$.
\end{proposition}

\begin{proof}
    We use the homogeneity properties of Hall bases: for $b \in \cB^\star$ and $a \in \Br(X)$, if $\langle b^*, a \rangle \neq 0$, then $|a|=|b|$, $n_0(a)=n_0(b)$ and $n_1(a) = n_1(b)$.
    Differentiating \eqref{eq:xib-multi-bis}, for any $b \in \cB^\star$,
    \begin{equation}
        \label{eq:xib-local}
        \dot{\xi_b} = \sum
        \left\langle b^*, \ad_{a_r}^{m_r} \mathbb{P}_{a_r}^+ \dotsb \ad_{a_1}^{m_1} \mathbb{P}_{a_1}^+ (c) \right\rangle
        \frac{\xi_{a_r}^{m_r} \dotsb \xi_{a_1}^{m_1}}{m_r! \dotsb m_1!} u_c,
    \end{equation}
    summing over $r \in \N$, $a_1 < \dotsb < a_r < b \in \cB^\star$, $m_1, \dotsc, m_r \in \N^*$ and $c \in \cE \cup \{X_0\}$, with $u_{X_0} \equiv 1$.

    \begin{itemize}
        \item For $b \in \{X_0,X_1\}$, since $|b|=1$, the only non-zero term in \eqref{eq:xib-local} is $r=0$ and $c=b$.

        \item For $b=M_\nu$ with $\nu \geq 1$, we have $n_1(M_\nu)=1$. 
        Hence either ($r=0$ and $c=M_\nu$) or ($r=1$, $m_1=1$, $a_1=M_{\nu-1}$ and $c=X_0$). 
        Indeed, terms with $r \geq 2$ or with $r=1$ and $m_1 \geq 2$ contain at least two occurrences of $X_1$.

        \item For $b=W_j$ with $j \geq 1$, we have $n_1(W_j)=2$ and $|W_j|=2j+1$.
        \begin{itemize}
            \item If $r=0$, then $c=W_j$, which gives the term $u_{W_j}$.

            \item If $r=1$, we distinguish cases.
            \begin{itemize}
                \item If $m_1 = 1$, $c = X_0$ yields no contribution, so $c = M_l$ and $a_1 = M_i$ with $M_i < M_l$.
                Repeated Jacobi yields $\langle b^*, [M_i, M_l] \rangle = (-1)^{i-j+1} \mathbf{1}_{i+l=2j-1}$, which gives the sum in \eqref{eq:xi-Bstar}.
                \item If $m_1 = 2$, then $a_1 = M_{j-1}$, which gives the term $\frac 12 \xi_{M_{j-1}}^2$.
            \end{itemize}
            
            \item If $r = 2$, then we are considering a term of the form $[M_i, [M_l, X_0]]$ with $M_i > M_l$ and $M_i < (M_l, X_0)$, impossible by \eqref{eq:order-Bstar}.\qedhere
        \end{itemize}
    \end{itemize}
\end{proof}

\subsection{A positivity argument}

By \cref{prop:splitting-iff}, a splitting method is a $u \in \fD$ such that $S_N(1,X,u) = \exp(X_0+X_1)$.
An explicit reference control $\underline{u} \in L^1$ such that $S_N(1,X,\underline{u}) = \exp(X_0+X_1)$ is $\underline{u}_{X_1} = 1$ and $\underline{u}_b = 0$ for $b \neq X_1$.
The following positivity argument will be used multiple times in the sequel.

\begin{proposition}
    \label{prop:xi>WN}
    Let $N \in \N^*$ and $\cE \subset \cB^\star \setminus \{X_0\}$ such that $M_N, \dotsc, M_{2N} \notin \cE$ and $W_N \notin \cE$.
    Let $u \in \fD$ such that $\xi_b(1,u)=\xi_b(1,\underline{u})$ for all $b \in \{M_{N},\dots,M_{2N}\}$. 
    Then
    \begin{equation}
        \label{eq:WN>WN}
        \xi_{W_N}(1,u) > \xi_{W_N}(1,\underline{u}).
    \end{equation}
\end{proposition}

\begin{proof}
    Let $u^\varepsilon \in L^1((0,1);\R^{|\cE|})$ be an order-preserving regularization of $u$ given by \cref{prop:impulsive-reg}. 
    For all $b \in \cB^\star_N$ and $t \in (0,1]$, by \cref{prop:chevalley} and \cref{def:xib-impulsive}, one has $\xi_b(t,u^\varepsilon)\to \xi_b(t,u)$.
    Set
    \begin{equation}
        v^\varepsilon(t):=\xi_{M_{N-1}}(t,u^\varepsilon),
        \qquad
        v(t):=\xi_{M_{N-1}}(t,u),
        \qquad
        \underline{v}(t):=\xi_{M_{N-1}}(t,\underline{u})=\frac{t^N}{N!}.
    \end{equation}
    Since $M_N,\dots,M_{2N}\notin \cE$, for every $\varepsilon>0$ and every $\nu\in\intset{0,N}$, the equations \eqref{eq:xi-Bstar} give
    \begin{equation}
        \delta_\nu^\varepsilon
        :=
        \xi_{M_{N+\nu}}(1,u^\varepsilon)-\xi_{M_{N+\nu}}(1,\underline{u})
        =
        \int_0^1 \frac{(1-t)^\nu}{\nu!}\bigl(v^\varepsilon(t)-\underline{v}(t)\bigr)\dd t.
    \end{equation}
    For each $\nu \in \intset{0,N}$, since $\xi_{M_{N+\nu}}(1,u^\varepsilon)\to \xi_{M_{N+\nu}}(1,u) = \xi_{M_{N+\nu}}(1,\underline{u})$, we have $\delta_\nu^\varepsilon \to 0$.
    Since $\underline{v}(t)=t^N/N!$ belongs to the span of the polynomials $(1-t)^\nu$, we have
    \begin{equation}
        \int_0^1 \underline{v}(t)\bigl(v^\varepsilon(t)-\underline{v}(t)\bigr) \dd t = o(1).
    \end{equation}
    Since $M_N,\dots,M_{2N-1},W_N\notin\cE$, the formula \eqref{eq:xi-Bstar} yields
    \begin{equation}
        \xi_{W_N}(1,u^\varepsilon)-\xi_{W_N}(1,\underline{u})
        =
        \frac12\|v^\varepsilon-\underline{v}\|_{L^2(0,1)}^2
        +\int_0^1 \underline{v}(t)\bigl(v^\varepsilon(t)-\underline{v}(t)\bigr)\dd t.
    \end{equation}
    Since $\xi_{W_N}(1,u^\varepsilon)\to \xi_{W_N}(1,u)$ and
    $v^\varepsilon(t)\to v(t)$ for every $t\in(0,1]$, Fatou's lemma gives
    \begin{equation}
        \xi_{W_N}(1,u)-\xi_{W_N}(1,\underline{u})
        \geq \frac12 \| v - \underline{v} \|_{L^2}^2.
    \end{equation}
    It remains to prove that $v\neq \underline{v}$ in $L^2(0,1)$.
    Since $u \in \fD$, let $I \subset (0,1)$ be a nonempty interval containing no impulse time.
    For $\varepsilon$ small enough, $u^\varepsilon \equiv 0$ on $I$, so $(v^{\varepsilon})^{(N)} \equiv 0$, so $v^{(N)} \equiv 0$.
    On the contrary, $\underline{v}^{(N)} \equiv 1$.
    Hence $v \neq \underline{v}$ and $\xi_{W_N}(1,u) > \xi_{W_N}(1,\underline{u})$.
\end{proof}

\subsection{Proof of the upper bound of \cref{thm:max-N}}

\begin{proof}
    Let $N \in \N^*$. 
    Assume that there exists an $(\R^+,\R)$ splitting method of order $(2N+1)$ involving $X_0$ and $X_1, W_1, \dotsc, W_{N-1}$.
    By \cref{prop:split-xi}, there exists $u \in \fD$ such that, for all $b \in \cB^\star$ with $|b| \leq 2N+1$, one has $\xi_b(1,u) = \xi_b(1,\underline{u})$.
    Since $| M_\nu | = \nu + 1$, this holds in particular for any $b \in \{ M_N, \dotsc, M_{2N} \}$.
    Thus, by \cref{prop:xi>WN}, $\xi_{W_N}(1,u) > \xi_{W_N}(1,\underline{u})$.
    This gives a contradiction since $|W_N| = 2N+1$.
\end{proof}

\section{High order methods relying on degeneracies}
\label{sec:high_order_deg}

In this section the base field of all the vector spaces is $\K=\R$.
Our goal is to prove \cref{Thm:Deg}.
A key ingredient is the error estimate associated with Sussmann's product, presented in \cref{subsec:Suss_err}. 
In this section, the estimates on products of flows have the same sense as in \cref{def:SM}.

\subsection{Error estimate for Sussmann's product}
\label{subsec:Suss_err}

We consider the system \eqref{eq:x-f0f1} with scalar control $u$.

\begin{proposition}
    \label{Prop:Suss_Err}
    Let $\cB$ be a Hall set, $(\xi_b)_{b \in \cB}$ be the coordinates of the second kind associated with $\cB$ (see \eqref{eq:xib-sussmann}), $N \in \N^*$ and $f_0, f_1 \in \cC^\infty(\R^d;\R^d)$.
    For every $x_0 \in \R^d$ and $u \in L^1$ or $\fD$,
    \begin{equation}
        x(1;Tf,u,x_0)= \underset{b \in \cB_N}{\overrightarrow{\prod}} e^{\xi_b(1,u) T^{|b|} f_b} x_0 + \underset{T \to 0}{O}(T^{N+1}).
    \end{equation}
\end{proposition}

\begin{proof}
    We deduce from \cite[Proposition 104]{P1} that
    for every $x_0 \in \R^d$, there exists $C=C(f, x_0, N)$ such that, for every $u \in L^1((0,1);\R)$ and $T>0$ small enough,
    \begin{equation}
        \label{err_L1}
        \left| x(1;Tf,u,x_0) -
        \underset{b \in \cB_N}{\overrightarrow{\prod}}
        e^{ \xi_b(1,u) T^{|b|} f_b } x_0 \right|
        \leq C \|u\|_{L^1}^{N+1} T^{N+1}.
    \end{equation}
    For $u \in \fD$, we consider the regularization $u^{\varepsilon} \in L^1$ of \cref{prop:impulsive-reg}. 
    They satisfy $\|u^{\varepsilon}\|_{L^1}=O(1)$ and $\xi_b(1,u^{\varepsilon}) \to \xi_b(1,u)$ as $\varepsilon \to 0$ by \cref{prop:chevalley} and \cref{def:xib-impulsive}.
    We conclude by passing to the limit $\varepsilon \to 0$ in the estimate \eqref{err_L1} for $u^{\varepsilon}$.
\end{proof}

\subsection{Proof of the \cref{Thm:Deg}}

We first formulate a technical lemma.

\begin{lemma}
    \label{Lem:Tj}
    Let $N \in \N$ and $f_1, g_1, \dotsc, f_N, g_N$ be smooth vector fields on $\R^d$ such that
    \begin{equation}
        \label{hyp_LemTj}
        \underset{j \in \intset{1, N}}{\overrightarrow{\prod}}
        e^{ T^j f_j }
        =
        \underset{j \in \intset{1, N}}{\overrightarrow{\prod}}
        e^{ T^j g_j}
        + \underset{T \to 0}{O}(T^{N+1}).
    \end{equation}
    Then, $f_j=g_j$ for every $j \in \intset{1, N}$.
\end{lemma}

\begin{proof}
    We prove by induction on $k \in \intset{1, N}$ that $f_k = g_k$. 
    Let $k \in \intset{1, N}$. 
    Assume that $f_j = g_j$ for $j<k$. 
    Then \eqref{hyp_LemTj} implies
    \begin{equation}
        \label{hyp_LemTj_rec}
        \underset{j \in \intset{k, N}}{\overrightarrow{\prod}}
        e^{ T^j f_j }
        =
        \underset{j \in \intset{k, N}}{\overrightarrow{\prod}}
        e^{ T^j g_j }
        + \underset{T \to 0}{O}(T^{N+1}).
    \end{equation}
    By considering the Taylor expansion of \eqref{hyp_LemTj_rec}, we obtain, $e^{T^k f_k} =e^{T^k g_k} + O(T^{k+1})$.
    Moreover $(e^{T^k f_k}-e^{T^k g_k})=T^k(f_k-g_k)+O(T^{k+1})$ thus $f_k=g_k$.
\end{proof}

\begin{proof}[Proof of \cref{Thm:Deg}]
    Let $N \in \N^*$ and $f_0, f_1$ be smooth vector fields on $\R^d$ such that $f_{W_j}=0$ for any $j \in \intset{1, N-1}$. 
    We assume that there exists a splitting method of order $(2N+1)$ relative to $(f_0, f_1)$. 
    Let $u \in \fD$ be the associated control and $\underline{u}:[0,1] \to \R$ identically equal to $1$. 
    Then,
    \begin{equation}
        \label{eq:proof-deg-1}
        \forall x_0 \in \R^d, \qquad
        x(1;Tf,u,x_0)=x(1;Tf,\underline{u},x_0)+O(T^{2N+2}).
    \end{equation}
    Consider the Hall set $\cB^\star$, and $\cB^\star_{2N+1}$ its elements of length at most $2N+1$.
    By \cref{Prop:Suss_Err}, equation \eqref{eq:proof-deg-1} implies that
    \begin{equation}
        \label{prod1}
        \underset{b \in\cB^\star_{2N+1}}{\overrightarrow{\prod}}
        e^{ T^{|b|} \xi_b(1,u) f_b } =
        \underset{b \in\cB^\star_{2N+1}}{\overrightarrow{\prod}}
        e^{ T^{|b|} \xi_b(1,\underline{u}) f_b }
        + O(T^{2N+2}).
    \end{equation}
    Let $\cR_N := \{X_1, M_1,\dots,M_{2N}, W_{N},X_0\}$, $\cE:=\{W_1,\dots,W_{N-1}\}$ and $\mathcal{I}_\cE$ the ideal of $\cL_{2N+1}(X)$ generated by $\cE$. 
    For any $b \in \mathcal{I}_\cE$, $f_b = 0$.
    Moreover, $\cB^\star_{2N+1} \setminus \cR_N \subset \mathcal{I}_\cE$.
    Thus \eqref{prod1} implies
    \begin{equation}
        \underset{b \in \cR_N}{\overrightarrow{\prod}}
        e^{ T^{|b|} \xi_b(1,u) f_b } =
        \underset{b \in \cR_N}{\overrightarrow{\prod}}
        e^{ T^{|b|} \xi_b(1,\underline{u}) f_b }
        + O(T^{2N+2}).
    \end{equation}
    Since $X_0$ is the maximal element of $\cB^\star$ (thus involved in the last term of both products) and $\xi_{X_0}(1,u)=\xi_{X_0}(1,\underline{u})=1$, we obtain
    \begin{equation}
        \underset{b \in \cR_N^{\oslash}}{\overrightarrow{\prod}}
        e^{ T^{|b|} \xi_b(1,u) f_b } =
        \underset{b \in \cR_N^{\oslash}}{\overrightarrow{\prod}}
        e^{ T^{|b|} \xi_b(1,\underline{u}) f_b }
        + O(T^{2N+2})
    \end{equation}
    where $\cR_N^{\oslash} := \cR_N \setminus \{X_0\}$.
    From this estimate, using that $e^{\varepsilon g} e^{\varepsilon h} = e^{\varepsilon(g+h)} + O(\varepsilon^2)$ to gather the terms for $M_{2N}$ and $W_N$, we obtain
    \begin{equation}
        \underset{j \in \intset{0, 2N-1}}{\overrightarrow{\prod}}
        e^{ T^{j+1} \xi_{M_j}(1,u) f_{M_j} }
        e^{T^{2N+1} F_{u}}
        =
        \underset{j \in \intset{0, 2N-1}}{\overrightarrow{\prod}}
        e^{ T^{j+1} \xi_{M_j}(1,\underline{u}) f_{M_j} }
        e^{T^{2N+1} F_{\underline{u}}}
        + O(T^{2N+2})
    \end{equation}
    where $F_u=\xi_{M_{2N}}(1,u) f_{M_{2N}}+ \xi_{W_N}(1,u) f_{W_N}$.
    By \cref{Lem:Tj}, this implies
    \begin{equation}
        \label{prod4}
        \forall j \in \intset{0, 2N-1},
        \qquad
        (\xi_{M_{j}}(1,u)-\xi_{M_{j}}(1,\underline{u}))f_{M_j}=0
        \qquad \text{ and } \qquad
        F_u=F_{\underline{u}}
    \end{equation}
    If there exists $j \in \intset{0, 2N-1}$ such that $f_{M_j}=0$ then $f_{M_{2N}}=0$ thus the conclusion holds.
    Otherwise, we deduce from \eqref{prod4} that, for every $j \in \intset{0, 2N-1}$, $\xi_{M_{j}}(1,u)=\xi_{M_{j}}(1,\underline{u})$. Then, by \cref{prop:xi>WN},
    $(\xi_{M_{2N}}(1,u)-\xi_{M_{2N}}(1,\underline{u}) ,
    \xi_{W_{N}}(1,u)-\xi_{W_{N}}(1,\underline{u})) \neq (0,0)$ and
    the equality $F_u=F_{\underline{u}}$ means that $f_{M_{2N}}$ and $f_{W_N}$ are linearly dependent.
\end{proof}

\subsection{Using commutator flows versus using degeneracies}
\label{sec:flows-vs-degeneracies}

As highlighted in the introduction, one must distinguish between two related but different notions: using commutator flows to obtain a method valid for all systems (for which one can compute these flows) as in \cref{sec:high_order_comm}, or using degeneracies to obtain a method valid only for systems exhibiting these as in \cref{sec:high_order_deg}.
With our vocabulary, one has the following result.

\begin{proposition}
    \label{Prop:+flow->rel}
    Let $\cE \subset \Br(X) \setminus \{ X_0 \}$. 
    If there exists an $(\R^+,\R)$ splitting method of order $N$ involving $X_0$ and $\cE$, then there exists a splitting method of order $N$ involving only $X_0$ and~$X_1$, relative to the set $\{ (f_0,f_1) \in \cC^\infty(\R^d;\R^d) \mid f_b=0, \enskip \forall b \in \cE \setminus \{X_1\} \}$. 
    The reciprocal is false.
\end{proposition}

\begin{proof}
    The direct implication is straightforward.
    If one assumes that $f_b = 0$ for all $b \in \cE \setminus \{ X_1 \}$, the associated flows satisfy $e^{\alpha f_b} = \operatorname{Id}$, so they disappear from the splitting method involving $f_0$ and the elements of $\cE$. 

    A counterexample to the reciprocal is given by the case $\cE=\{X_1, M_1\}$.
    For every $N\in\N^*$, we know a splitting method of order $N$ relative to the smooth vector fields satisfying $f_{M_1}=0$ since the relation $[f_1,f_0]=0$ implies that $e^{T(f_0+f_1)}=e^{T f_1} e^{T f_0}$. 
    However, there does not exist a splitting method of order $5$ involving $X_0$ and $X_1$, $M_1$.

    Indeed, let us assume that such a method exists. 
    By \cref{prop:split-xi}, there exists a control $u=(u_{X_1},u_{M_1}) \in \fD$ such that $\xi_b(1,u)=\xi_b(1,\underline{u})$ for every $b \in \cB^\star_5$, where the $\xi_b$ are the coordinates of the second kind associated with $\cB^\star$, $\cE$ and $\underline{u}=(\underline{u}_{X_1},\underline{u}_{M_1})=(1,0)$.
    Since $\cE = \{ X_1, M_1 \}$, by \cref{prop:xi>WN} with $N = 2$, $\xi_{W_2}(1,u)>\xi_{W_2}(1,\underline{u})$, which is a contradiction since $W_2 \in \cB^\star_5$.
\end{proof}

\subsection{Characterization of a relative splitting method}

In \cref{prop:splitting-iff}, we gave a characterization making a link between the controllability of $\cZ_N$ and the existence of a splitting method of order $N$ valid for all systems.
In this paragraph, we give an analogue characterization for splitting methods relative to a class of vector fields satisfying degeneracy conditions.
Note that, for any smooth vector fields $f_0$, $f_1$, the set of $b \in \cL(X)$ such that $f_b = 0$ is an ideal of $\cL(X)$.

\begin{proposition}
    Let $N \in \N^*$ and $I$ an ideal of $\cL^\oslash_N(X)$.
    The following statements are equivalent:
    \begin{enumerate}
        \item \label{it:charac-1}
        There exists an $(\R^+,\R)$ splitting method of order $N$ involving $X_0$ and $X_1$ relative to the smooth vector fields satisfying $f_b=0$ for every $b \in I$.
        \item \label{it:charac-2}
        There exists $u \in \fD$ such that $\sigma \big( \cZ_N(1,X,u) - \log( \exp(-X_0) \exp(X_0+X_1))\big)=0$, where $\sigma : \cL^\oslash_N(X) \to \cL^\oslash_N(X)/ I$ is the canonical surjection. 
    \end{enumerate}
\end{proposition}

\begin{proof}
    \emph{\cref{it:charac-1} $\Rightarrow$ \cref{it:charac-2}.}
    Let $g_0, g_1$ be the vector fields on $\cL^\oslash_N(X)$ defined in \cref{Prop:LARC_Z}.
    Let $\widetilde{g_0}, \widetilde{g_1}$ be the induced vector fields on the quotient $\cL^\oslash_N(X) / I$.
    They are $\sigma$-related to $g_0, g_1$, i.e.\ they satisfy $\mathrm{d} \sigma |_z g_i(z) = \widetilde{g_i}(\sigma(z))$ for all $z \in \cL^\oslash_N(X)$.
    Since $b \mapsto \widetilde{g_b}$ is a Lie algebra morphism factoring through $\cL_N^\oslash(X)/I$, it vanishes on $I$. 
    So $\widetilde{g_b}=0$ for every $b \in I$.
    
    The function $Y(t)=\sigma(\cZ_N(t,X,u))$ solves the affine system $\dot{Y} = \widetilde{g_0}(Y) + u \widetilde{g_1}(Y)$ with $Y(0) = 0$. 
    We conclude by applying the splitting method to this system.
    Let $u \in \fD$ be given by \cref{it:charac-1}.
    Then $x(1;T\widetilde{g},u,0) = e^{T(\widetilde{g_0}+\widetilde{g_1})}(0) + O(T^{N+1})$ as $T \to 0$.
    Let $\mu_T$ be the Lie algebra morphism on $\cL(X)$ such that $\mu_T(X_i) = T X_i$.
    By the definitions of $g_0$ and $g_1$, we have $\mu_T (g_i(z)) = T g_i(\mu_T(z))$.
    Thus $Z_T(t) := \mu_T \cZ_N(t,X,u)$ satisfies $\dot{Z_T} = T g_0(Z_T) + u T g_1(Z_T)$.
    Applying $\mathrm{d} \sigma$, we obtain that $Y_T := \sigma(\mu_T \cZ_N(t,X,u))$ satisfies $\dot{Y}_T = T \widetilde{g_0}(Y_T) + u T \widetilde{g_1}(Y_T)$.
    Hence we have $\sigma \mu_T (\cZ_N(1,X,u) - \cZ_N(1,X,\bar{u})) = O(T^{N+1})$ where $\bar{u} = 1$.
    Since the left-hand side is a polynomial in $T$ of degree at most $N$, we conclude that it is zero.
    Evaluating at $T = 1$, we have $\sigma(\cZ_N(1,X,u) - \log(\exp(-X_0) \exp(X_0+X_1))) = 0$.
    
    \medskip
    \emph{\cref{it:charac-2} $\Rightarrow$ \cref{it:charac-1}.}
    There exists $u \in \fD$ and $V(X) \in I$ such that $\cZ_N(1,X,u) = Q_N(X)+V(X)$
    where $Q_N(X):=\log( \exp(-X_0) \exp(X_0+X_1)) = \cZ_N(1,X,\bar{u})$ in $\cL_N^\oslash(X)$ where $\bar{u} := 1$.
    Let $f_0, f_1$ be smooth vector fields on $\R^d$ such that $f_b=0$ for every $b \in I$. 
    By \cref{prop:error-Z}, as $T \to 0$,
    \begin{equation}
        \begin{split}
            x(1;Tf,u,x_0)
            & = e^{\cZ_N(1,Tf,u)} e^{Tf_0} (x_0) + O(T^{N+1}) \\
            & = e^{Q_N(Tf) + V(Tf)} e^{Tf_0} (x_0) + O(T^{N+1}) \\
            & = e^{Q_N(Tf)} e^{Tf_0} (x_0) + O(T^{N+1}).
        \end{split}
    \end{equation}
    Again by \cref{prop:error-Z}, as $T \to 0$,
    \begin{equation}
        \begin{split}
            e^{T(f_0+f_1)}(x_0) 
            & = x(1;Tf,\bar{u},x_0) \\
            & = e^{\cZ_N(1,Tf,\bar{u})} e^{T f_0}(x_0) + O(T^{N+1}) \\
            & = e^{Q_N(Tf)} e^{T f_0} (x_0) + O(T^{N+1}). 
        \end{split} 
    \end{equation}
    Combining both concludes the proof.
\end{proof}

\section{Proofs of the error estimates}
\label{sec:error-proofs}

We prove the error estimates stated in \cref{subsec:error_Magnus_syst}.
Such estimates are classical in control theory (see \cite{P1} for a survey).
We nevertheless here provide self-contained proofs since our estimates concern system~\eqref{EDOf0fA} which is only studied with $\cE = \{ X_1 \}$ in the literature.
Both estimates rely on an error estimate for the Chen--Fliess expansion, which we first prove.

We refer to \cite[Section 4.1]{P1} for a description of the key linearization principle which allows to make a link between the nonlinear ODE \eqref{eq:syst-f0-f1-fm} and the formal linear ODE \eqref{formalODE_A} by considering the operator $\varphi \mapsto \varphi \circ x(t;f,u,\cdot)$.
This principle underlies the following estimates.

\subsection{Error estimate for the Chen--Fliess expansion}

\begin{definition}
    Let $\Op(\cC^\infty(\K^d;\K))$ be the algebra of linear operators on $\cC^\infty(\K^d;\K)$. 
    In particular, we identify a vector field $f \in \cC^\infty(\K^d;\K^d)$ with the first-order differential operator $\varphi \mapsto (f \cdot \nabla) \varphi$.
    When $\K = \C$, we always implicitly consider holomorphic functions (see \cref{sec:holomorphic-systems}).
\end{definition}

\begin{proposition}
    \label{prop:error-Chen}
    Fix $f_0, f_1 \in \cC^\infty(\K^d;\K^d)$, $x_0 \in \K^d$ and $u \in \fD$ or $L^1$.
    For all $N \in \N$ and $\varphi \in \cC^\infty(\K^d;\K)$, there exists $C_{N,\varphi}$ such that, for $T$ small enough and $t \in [0,1]$,
    \begin{equation}
        \label{eq:Chen-T^N}
        \left| \varphi(x(t;Tf,u,x_0)) - \left(S_N(t,Tf,u) \varphi\right) (x_0) \right| \leq C_{N,\varphi} T^{N+1},
    \end{equation}
    where $S_N(t,Tf,u) \in \Op(\cC^\infty(\K^d;\K))$ is the image of $S_N(t,X,u)$ by the morphism of algebras from $\cA(X)$ to $\Op(\cC^\infty(\K^d;\K))$ mapping $X_0$ to $T f_0$ and $X_1$ to $T f_1$ (see \cref{lem:C-morphism-A} when $\K = \C$).
\end{proposition}

\begin{proof}
    Fix $u \in L^1((0,1);\K^{|\cE|})$.
    Since the following estimates only depend on $\| u \|_{L^1}$, the proof when $u \in \fD$ follows by the regularization of \cref{prop:impulsive-reg}.
    
    We prove by induction on $N \in \N$ that \eqref{eq:Chen-T^N} holds for all $\varphi \in \cC^\infty(\K^d;\K)$.
    For brevity, we write $S_N(t) := S_N(t,Tf,u)$ and $x(t) := x(t;Tf,u,x_0)$, which satisfies on $[0,1]$
    \begin{equation}
        \label{eq:edo-x-Tf}
        \dot{x} = T f_0(x) + \sum_{b \in \cE} u_b T^{|b|} f_b(x).
    \end{equation}
    \emph{Initialization.}
    Since $S_0(t,X,u) = 1$, we have $(S_0(t) \varphi)(x_0) = \varphi(x_0)$.
    For $T$ small enough, the trajectory $x(\cdot)$ remains in some fixed closed ball $B$ around $x_0$ for $t \in [0,1]$.
    Then
    \begin{equation}
        | \varphi(x(t)) - \varphi(x_0) | 
        \leq \sup_B | \nabla \varphi | \sup_{t \in [0,1]} |x(t) - x_0| 
        \leq \| \varphi \|_{\cC^1(B)} \int_0^1 | \dot{x}(s) | \dd s
        \leq C_{0,\varphi} T
    \end{equation}
    using \eqref{eq:edo-x-Tf} since $f_0,f_b$ are bounded on $B$ and $u_b \in L^1(0,1)$.
    Thus \eqref{eq:Chen-T^N} holds for $N=0$.

    \medskip \noindent
    \emph{Induction step.}
    Assume that \eqref{eq:Chen-T^N} holds up to $N-1$.
    By the chain rule, for all $t \in [0,1]$,
    \begin{equation}
        \label{eq:true-int}
        \varphi(x(t)) = \varphi(x_0) + \int_0^t T (f_0\varphi)(x(s)) + \sum_{b \in \cE} u_b(s) T^{|b|} (f_b\varphi)(x(s)) \dd s.
    \end{equation}
    Summing \eqref{eq:xn.xn1} of \cref{def:formalODE} for $n = 1, \dotsc, N$, we obtain
    \begin{equation}
        \label{eq:trunc-int}
        \big(S_N(t) \varphi\big) (x_0) = \varphi(x_0) + \int_0^t T \big(S_{N-1}(s) (f_0 \varphi)\big) (x_0) + \sum_{b \in \cE} u_b(s) T^{|b|} \big(S_{N-|b|}(s) (f_b \varphi)\big) (x_0) \dd s,
    \end{equation}
    with the convention that $S_m \equiv 0$ when $m < 0$.
    Subtracting \eqref{eq:true-int} and \eqref{eq:trunc-int}, we obtain
    \begin{equation}
        \begin{split}
            \varphi(x(t)) - \big(S_N(t) \varphi\big) (x_0) 
            = T & \int_0^t (f_0 \varphi)(x(s)) - \big(S_{N-1}(s) (f_0 \varphi)\big) (x_0) \dd s \\
            & + \sum_{b \in \cE} T^{|b|} \int_0^t u_b(s) (f_b \varphi)(x(s)) - \big(S_{N-|b|}(s) (f_b \varphi)\big) (x_0) \dd s.
        \end{split}
    \end{equation}
    Applying the induction hypothesis to $f_0 \varphi$ and $f_b \varphi$ for $b \in \cE$, we obtain
    \begin{equation}
        \begin{split}
            \left| \varphi(x(t)) - \big(S_N(t) \varphi\big) (x_0) \right|
            \leq T C_{N-1, f_0 \varphi} T^N & + \sum_{b \in \cE, |b| \leq N} T^{|b|} \| u_b \|_{L^1} C_{N-|b|, f_b \varphi} T^{N-|b|+1} \\ 
            & + \sum_{b \in \cE, |b| > N} T^{|b|} \| u_b \|_{L^1} \sup_B | f_b \varphi |. 
        \end{split}
    \end{equation}
    Summing over the finite set $\cE$ yields \eqref{eq:Chen-T^N} at order $N$. 
    This completes the proof.
\end{proof}

\subsection{Truncation of flows}

\begin{lemma}
    \label{lem:flot_tronc}
    Let $g \in \cC^\infty(\K^d;\K^d)$ and $x_0 \in \K^d$ such that $|g| \leq 1$ on $B := B(x_0, 2)$.
    For any $N \in \N$ and $\varphi \in \cC^\infty(\K^d;\K)$, there exists $C_N > 0$ such that, for all $x_1 \in B(x_0,1)$,
    \begin{equation}
        \label{eq:flot_tronc}
        \left| \varphi(e^{g}(x_1)) - \sum_{k=0}^N \frac{g^k \varphi}{k!} (x_1) \right| \leq C_N \|g\|_{\cC^N(B)}^{N+1} \|\varphi\|_{\cC^{N+1}(B)}.
    \end{equation}
\end{lemma}

\begin{proof}
    Since $|g| \leq 1$ on $B$, the flow $e^{tg}(x_1)$ is well defined for $t \in [0,1]$ and contained in $B$.
    Moreover, for $k \in \N$ and $t \in [0,1]$
    \begin{equation}
        \frac{\dd^k}{\dd t^k} \varphi(e^{tg}(x_1)) = (g^k \varphi)(e^{tg}(x_1)).
    \end{equation}
    Thus, the Taylor expansion of order $N$ of the map
    $t \mapsto \varphi(e^{tg}(x_1))$ at $t = 0$ evaluated at $t = 1$ is
    \begin{equation}
        \varphi(e^{g}(x_1)) =
        \sum_{k=0}^N \frac{g^k \varphi}{k!}(x_1)
        + \int_0^1 \frac{(1-s)^{N}}{N!} (g^{N+1} \varphi)(e^{sg}(x_1)) \dd s
    \end{equation}
    which implies \eqref{eq:flot_tronc} with $C_N$ such that $\sup_B |g^{N+1} \varphi| \leq C_N \|g\|_{\cC^N(B)}^{N+1} \|\varphi\|_{\cC^{N+1}(B)}$.
\end{proof}

\subsection{Proof of \cref{prop:error-split}}

\begin{proof}[Proof of \cref{prop:error-split}]
    By assumption $u$ is such that, in $\cA_N(X)$,
    \begin{equation}
        S_N(1,X,u) = \exp(X_0+X_1) = \sum_{k=0}^N \frac{(X_0+X_1)^k}{k!}.
    \end{equation}
    Let $\varphi \in \cC^\infty(\K^d;\K)$.
    We deduce from \cref{prop:error-Chen} with $t = 1$ that, as $T \to 0$,
    \begin{equation}
        \begin{split}
            \varphi(x(1;Tf,u,x_0)) & = \left(S_N(1,Tf,u)\varphi\right)(x_0) + O\left(T^{N+1}\right) \\
            & = \sum_{k=0}^{N} \frac{T^k}{k!} (f_0+f_1)^k (\varphi)(x_0) + O\left(T^{N+1}\right) \\
            & = \varphi(e^{T(f_0+f_1)}(x_0)) + O\left(T^{N+1}\right),
        \end{split}
    \end{equation}
    where the last equality comes from \cref{lem:flot_tronc} with $g = T(f_0+f_1)$ and $x_1 = x_0$.
    
    Thus \eqref{eq:prop-error-split} follows by taking $\varphi$ to be the coordinate functions.
\end{proof}

\subsection{Proof of \cref{prop:error-Z}}

\begin{lemma}
    \label{lem:cut-eval-Tf}
    For $f_0, f_1 \in \cC^\infty(\K^d;\K^d)$ and $T > 0$, let $L_T$ be the morphism of associative algebras from $\cA(X)$ to $\Op(\cC^\infty(\K^d;\K))$ mapping $X_0$ to $T f_0$ and $X_1$ to $T f_1$ (see \cref{lem:C-morphism-A} when $\K = \C$).
    Let $K \subset \K^d$ compact.
    \begin{itemize}
        \item 
        Let $a \in \cA(X)$ and $N \in \N$ such that $\pi_N(a) = 0$.
        For all $\varphi \in \cC^\infty(\K^d;\K)$, as $T \to 0$,
        \begin{equation}
            \label{eq:eval-Tf-a}
            \| (L_T a) \varphi \|_{\cC^0(K)} = O\left(T^{N+1}\right).
        \end{equation}
        \item 
        Let $b \in \cL(X)$ and $M \in \N$.
        Identifying $L_T b$ with a vector field, as $T \to 0$,
        \begin{equation}
            \label{eq:eval-Tf-b}
            \| L_T b \|_{\cC^M(K)} = O(T).
        \end{equation}
    \end{itemize}
\end{lemma}

\begin{proof}
    We use the grading of $\cA(X)$.
    For $m \in \N$, $a \in \cA^m(X)$ and $T > 0$, $L_T a = T^m (L_1 a)$. 
    
    Proof of \eqref{eq:eval-Tf-a}.
    By linearity, using the grading of $\cA(X)$ (see \cref{def:free.algebra}), it suffices to prove the estimate when $a \in \cA^m(X)$ for some $m > N$ and is of the form $a = X_{i_1} \dotsb X_{i_m}$.
    Then $L_T a = T^m (f_{i_1} \dotsb f_{i_m})$ is a composition of $m$ first-order differential operators.
    A standard Leibniz estimate yields
    \begin{equation}
        \| f_{i_1} \dotsb f_{i_m} \varphi \|_{\cC^0(K)} \leq C \| \varphi \|_{\cC^m(K)},
    \end{equation}
    with $C$ depending only on $K$, $m$ and $\| f_0 \|_{\cC^{m-1}(K)}$ and $\| f_1 \|_{\cC^{m-1}(K)}$.

    Proof of \eqref{eq:eval-Tf-b}.
    Similarly, decomposing $b$ using the grading induced by $\cA(X)$ on $\cL(X)$, it suffices to prove the estimate when $b \in \cL(X) \cap \cA^m(X)$ for some $m \geq 1$ (since $\cL(X) \cap \cA^0(X) = \{ 0 \}$).
    Then $L_T b = T^m (L_1 b)$ where $L_1 b \in \cC^\infty(\K^d;\K^d)$.
    So \eqref{eq:eval-Tf-b} follows.
\end{proof}

\begin{proof}[Proof of \cref{prop:error-Z}]
    Define $a \in \cA(X)$ by
    \begin{equation}
        a := S_N(1,X,u) - \sum_{l=0}^N \sum_{k=0}^N \frac{1}{l! k!} X_0^l (\cZ_N(1,X,u))^k.
    \end{equation}
    By \eqref{SN=exp(ZN)}, $\pi_N(a) = 0$.
    Fix $\varphi \in \cC^\infty(\K^d;\K)$ and $x_0 \in \K^d$.
    Let $B := B(x_0,2)$.
    By \cref{lem:cut-eval-Tf},
    \begin{equation}
        (S_N(1,Tf,u) \varphi)(x_0)
        = \sum_{l=0}^N \sum_{k=0}^N \frac{1}{l! k!} \left( (Tf_0)^l (\cZ_N(1,Tf,u))^k \varphi \right)(x_0) + O\left(T^{N+1}\right)
    \end{equation}
    For $k \in \intset{0,N}$, let $\psi_k := \frac{1}{k!} (\cZ_N(1,Tf,u))^k \varphi$.
    By \cref{lem:flot_tronc} applied to $Tf_0$, $\psi_k$ and $x_1 = x_0$,    
    \begin{equation}
        \left| \psi_k(e^{T f_0}(x_0)) - \sum_{l=0}^N \frac{1}{l!} ((T f_0)^l \psi_k )(x_0) \right| \leq C_N \| T f_0 \|_{\cC^N(B)}^{N+1} \| \psi_k \|_{\cC^{N+1}(B)}.
    \end{equation}
    By \cref{lem:flot_tronc} applied to $\cZ_N(1,Tf,u)$, $\varphi$ and $x_1 = e^{T f_0}(x_0)$,
    \begin{equation}
        \left| \varphi(e^{\cZ_N(1,Tf,u)} e^{Tf_0} (x_0)) - \sum_{k=0}^N \psi_k (e^{T f_0}(x_0)) \right| \leq C_N \| \cZ_N(1,Tf,u) \|_{\cC^N(B)}^{N+1} \| \varphi \|_{\cC^{N+1}(B)}.
    \end{equation}
    By definition, we have $\cZ_N(1,Tf,u) = L_T(\cZ_N(1,X,u))$.
    By \cref{lem:cut-eval-Tf}, $\| \cZ_N(1,Tf,u) \|_{\cC^{2N}(B)} = O(T)$, and thus $\| \psi_k \|_{\cC^{N+1}(B)} = O(T^k)$ for all $k \in \intset{0,N}$.
    
    Using estimate \eqref{eq:Chen-T^N} of \cref{prop:error-Chen} at $t = 1$, and gathering the above, we obtain as $T \to 0$,
    \begin{equation}
        \varphi(x(1;Tf,u,x_0)) = \varphi(e^{\cZ_N(1,Tf,u)} e^{Tf_0} (x_0)) + O\left(T^{N+1}\right).
    \end{equation}
    Thus \eqref{eq:error-x-exp(z)} follows by taking $\varphi$ to be the coordinate functions.
\end{proof}

\appendix

\section{Classical arguments from control theory}
\label{s:appendix}

For the sake of giving a self-contained exposition, we gather here the proofs of some well-known results in control theory used throughout this paper.

\subsection{Rank condition implies accessibility}
\label{s:access}

We present the inversion argument \cref{p:access} which is a classical result of control theory, constructing a ``normal control'' as in \cite[page 169]{MR872457}. 
Its proof requires the following lemma on vector fields tangent to a submanifold.

\begin{lemma}
    \label{lem:tangent-local}
    Let $V \subset \R^d$ open, $M \subset V$ a smooth embedded submanifold, and $f$, $g$ smooth vector fields on $V$, tangent to $M$, i.e.\ $f(x),g(x)\in T_x M$ for all $x\in M$. 
    Then $[f,g]$ is tangent to $M$.
\end{lemma}

\begin{proof}
    Fix $x \in M$. 
    Since $M$ is an embedded submanifold, there exist an open neighborhood $W \subset V$ of $x$ and a smooth submersion $\psi : W\to \R^{d-k}$, where $k=\dim M$, such that $M \cap W = \psi^{-1}(0)$.
    Moreover, for all $y \in M \cap W$, $T_y M = \ker D\psi(y)$.
	
	Since $g(x)\in T_x M$, there exists a smooth curve $\gamma:(-\varepsilon,\varepsilon) \to M \cap W$ such that
	$\gamma(0)=x$ and $\gamma'(0)=g(x)$. 
	Since $f$ is tangent to $M$, $D\psi(\gamma(t)) f(\gamma(t)) = 0$ for all $t$.
	Differentiating with respect to $t$ at $t=0$ yields
	\begin{equation}
		D^2\psi(x)\bigl(g(x),f(x)\bigr)+D\psi(x)Df(x)g(x)=0.
	\end{equation}
	Swapping $f$ and $g$, and using the symmetry of $D^2\psi(x)$, we get
	\begin{equation}
	D\psi(x) [f,g](x) = D\psi(x)\bigl(Dg(x)f(x)-Df(x)g(x)\bigr)=0.
	\end{equation}
    Thus $[f,g](x) \in T_x M$.
\end{proof}

\begin{proposition}
    \label{p:access}
    Let $V \subset \R^d$ be an open neighborhood of $x_0 \in \R^d$, and $\mathcal{F}$ a family of smooth vector fields on $V$.
    Assume that $(\Lie \mathcal{F})(x_0) = \R^d$.
    Then, for all $T>0$, there exist $f_1,\dots,f_d \in \mathcal{F}$,
    and an open subset $\Omega \subset (0,T)^d$ such that the map
    \begin{equation}
        \Phi : \ft = (t_1, \dotsc, t_d) \mapsto e^{t_d f_d} \dotsb e^{t_1 f_1}(x_0)
    \end{equation}
    is a smooth embedding of $\Omega$ into $V$.
    In particular, $D \Phi$ has rank $d$ on $\Omega$.
\end{proposition}

\begin{proof}
    Choose $g_1,\dots,g_d\in \Lie \mathcal{F}$ such that $g_1(x_0),\dots,g_d(x_0)$ is a basis of $\R^d$.
    By continuity, there exists an open neighborhood $W \subset V$ of $x_0$ such that, for every $x \in W$, the vectors $g_1(x),\dots,g_d(x)$ are still linearly independent, hence $(\Lie \mathcal{F})(x) = \R^d$.

    We prove by induction on $k\in\{1,\dots,d\}$ the following statement:
    \emph{There exist $f_1,\dots,f_k\in\mathcal F$ and an open subset $\Omega_k \subset (0,T)^k$ such that the map $\Phi_k : \ft = (t_1, \dotsc, t_k) \mapsto e^{t_k f_k} \dotsb e^{t_1 f_1}(x_0)$ is a smooth embedding of $\Omega_k$ into $W$.}

    \medskip
    \noindent\emph{Initialization: $k=1$.}
    First, there exists $f_1 \in \mathcal{F}$ such that $f_1(x_0)\neq 0$.
    Otherwise, all elements of $\Lie \mathcal{F}$ would vanish at $x_0$, contradicting $(\Lie \mathcal{F})(x_0) = \R^d$.
    Choosing such an $f_1$, $\Phi_1$ is well-defined and smooth on a neighborhood of $0 \in \R^1$.
    Since $D \Phi_1(0) = f_1(x_0) \neq 0$, by continuity, there exists an open subset $\Omega_1 \subset (0,T)$ on which $\Phi_1$ is an embedding.

    \medskip
    \noindent\emph{Induction step.}
    Assume the statement proved for some $k\in\{1,\dots,d-1\}$.

    Since $\Phi_k$ is a smooth embedding, $M_k := \Phi_k(\Omega_k)$ is a smooth embedded submanifold of $\R^d$ of dimension $k$.
    Moreover, for any $\mathfrak{s} \in \Omega_k$ and $y = \Phi_k(\mathfrak{s}) \in M_k$, $T_y M_k = D \Phi_k(\mathfrak{s}) (\R^k)$.
        
    Suppose, for contradiction, that every field in $\mathcal{F}$ is tangent to $M_k$.
    Then, by \cref{lem:tangent-local}, every field in $\Lie \mathcal{F}$ is tangent to $M_k$.
    Therefore, for every $x \in M_k$, $(\Lie \mathcal{F})(x) \subset T_x M_k$.
    But $M_k \subset W$, so $(\Lie \mathcal{F})(x) = \R^d$ for every $x \in M_k$, while $\dim T_x M_k = k < d$, a contradiction.
    Hence there exist $f_{k+1} \in \mathcal{F}$ and $y \in M_k$ such that $f_{k+1}(y) \notin T_y M_k$.
    
    Write $y = \Phi_k(\mathfrak{s})$ with $\mathfrak{s} \in \Omega_k$.
    For $\ft = (t_1, \dots, t_{k+1})$ in a neighborhood of $(\mathfrak{s},0)$, the map $\Phi_{k+1}$ is well-defined, smooth, with values in $W$.
    Moreover,
    \begin{equation}
        D\Phi_{k+1}(\mathfrak{s},0)(h,\lambda)
        =
        D\Phi_k(\mathfrak{s})h+\lambda f_{k+1}(y).
    \end{equation}
    Since $f_{k+1}(y) \notin D\Phi_k(\mathfrak{s})(\R^k)$, the map
    $D\Phi_{k+1}(\mathfrak{s},0)$ has rank $k+1$.
    Thus there exists an open subset $\Omega_{k+1} \subset \Omega_k \times (0,T)$ on which $\Phi_{k+1}$ is a smooth embedding.
\end{proof}

\cref{p:access} has a natural generalization for a set of holomorphic vector fields.

\begin{corollary}
    \label{p:access-C}
    Let $V \subset \C^d$ be an open neighborhood of $z_0 \in \C^d$, and $\mathcal{F}$ a family of holomorphic vector fields on $V$.
    Assume that $(\Lie_\R \mathcal{F})(z_0) = \C^d$.
    For all $T>0$, there exist $f_1,\dots,f_{2d} \in \mathcal{F}$,
    and an open subset $\Omega \subset (0,T)^{2d}$ such that the map
    \begin{equation}
        \Phi : \ft = (t_1, \dotsc, t_{2d}) \mapsto e^{t_{2d} f_{2d}} \dotsb e^{t_1 f_1}(z_0)
    \end{equation}
    is a smooth embedding of $\Omega$ into $V$.
    In particular, $D \Phi$ has rank $2d$ on $\Omega$.
\end{corollary}

\begin{proof}
    As in \cref{sec:holomorphic-systems}, work with the associated family $\mathcal{G}$ of smooth vector fields on $\R^{2d}$ defined from the realification of the vector fields of $\mathcal{F}$.
    Since $(\Lie_{\R} \mathcal{F})(z_0)=\C^d \Leftrightarrow (\Lie_{\R}\mathcal{G})(\iota(z_0))=\R^{2d}$, one can apply \cref{p:access} to the family $\mathcal{G}$, and the conclusion follows.
\end{proof}

\subsection{Exact and approximate local controllability}
\label{s:exact=app-proof}

We prove the equivalence of small-time local exact and approximate controllability, with controls in $\cU = \fD$ or $\cU = L^1$, under the Lie algebra rank condition assumption.

\begin{proof}[Proof of \cref{p:approx=exact} for $\K = \R$]
    Let $\cU = \fD$ or $\cU = L^1$.
    Assume that small-time local approximate $\cU$-controllability holds and fix $T > 0$.
    Let $\delta > 0$ such that $B(0,\delta) \subset \overline{R_{T/3}(\cU)}$.
    
    Since the Lie algebra rank condition $(\Lie \left\{ f_0, \dotsc, f_m \right\})(0) = \R^d$ holds, by \cref{p:access}, for any $\eta > 0$ there exist $g_1,\dots,g_d \in \{f_0,\dots, f_m\}$ and an open subset $\Omega \subset (0,\eta)^d$ such that the endpoint map $\Phi(\ft) := e^{-t_d g_d} \dotsb e^{-t_1 g_1} (0)$ is a smooth embedding of $\Omega$ into $\R^d$.
    Up to reducing $\eta$, we can assume that $\eta < T/(3d)$ and that $\Phi(\Omega) \subset B(0,\delta)$.

    Using the approximate controllability, there exists $\bar{u} \in \cU$ such that $\bar{x} := x(\frac{T}{3};f,\bar{u},0) \in \Phi(\Omega)$. 
    Thus, there exists a unique $\bar{\ft} \in \Omega$ such that $\bar{x} = \Phi(\bar{\ft})$, meaning $e^{\bar{t}_1 g_1} \dotsb e^{\bar{t}_d g_d} (\bar{x}) = 0$.
    
    For $\varepsilon \in [0,\eta)$ and $\ft \in \Omega$, define
    \begin{equation}
        \Psi_\cU(\varepsilon, \ft) := 
        \begin{cases} 
            e^{\varepsilon f_0} e^{t_1 g_1} \dotsb e^{t_d g_d} (\bar{x}) & \text{if } \cU = \fD, \\
            e^{\varepsilon f_0 + t_1 g_1} \dotsb e^{\varepsilon f_0 + t_d g_d} (\bar{x}) & \text{if } \cU = L^1.
        \end{cases}
    \end{equation}
    Since $\bar{x} \in R_{T/3}(\cU)$ and $\eta < T/(3d)$, using that $e^{\tau f_0}(0) = 0$ for any $\tau \geq 0$ (since $f_0(0) = 0$), we obtain by construction that, for any $\varepsilon \in (0,\eta)$ and $\ft \in \Omega$, $\Psi_\cU(\varepsilon,\ft) \in R_T(\cU)$.

    Let $P(\ft,y) := e^{t_1 g_1} \dotsb e^{t_d g_d}(y)$.
    Since $P(\ft,\Phi(\ft)) = 0$, we have $D_\ft P(\bar{\ft},\bar{x}) + D_y P(\bar{\ft},\bar{x}) D_\ft \Phi(\bar{\ft}) = 0$.
    $D_\ft \Phi(\bar{\ft})$ is an isomorphism because $\Phi$ is a smooth embedding by \cref{p:access}.
    $D_y P(\bar{\ft},\bar{x})$ is an isomorphism because $P(\bar{\ft},\cdot)$ is a diffeomorphism.
    Thus $D_\ft P(\bar{\ft},\bar{x}) = D_\ft \Psi_\cU(0,\bar{\ft})$ is an isomorphism.

    Hence, by the implicit function theorem, for $\varepsilon > 0$ and $x^*$ small enough, there exists $\ft \in \Omega$ such that $\Psi_\cU(\varepsilon,\ft) = x^*$. 
    Hence $0 \in \operatorname{int} R_T(\cU)$ and small-time local exact $\cU$-controllability holds.
\end{proof}

\begin{proof}[Proof of \cref{p:approx=exact} for $\K = \C$]
    Assume that $(\Lie_\C \left\{ f_0, f_1, \dots, f_m \right\})(0) = \C^d$.
    This implies that $(\Lie_\R \left\{ f_0, f_1, \dots, f_m, i f_1, \dotsc, i f_m \right\})(0) = \C^d$ using that $f_0(0) = 0$.
    Hence the proof is identical, using \cref{p:access-C} instead of \cref{p:access}.
\end{proof}

\subsection{The tangent vectors method}
\label{sec:tgt_vect}

We recall a classical method to prove the controllability of control-affine systems, relying on so-called ``tangent vectors'' (see \cite{Frankowska1986,kawski} for early occurrences, and \cite[Section~4]{Marbach2026} for an in-depth presentation).
A light version of this method combines elementary concatenation estimates (see \cref{Lem:concat}) with a Brouwer fixed-point argument (see \cref{prop:tgt-Brouwer}).

\begin{lemma}
    \label{Lem:concat}
    Let $m\in\N^*$, $f_0, f_1,\dots,f_m$ be smooth vector fields on a neighborhood of $0$ in $\K^d$ with $f_0(0)=0$. 
    There exist $C,\delta,r>0$ such that, for every $x_0 \in B(0,r)$, $u\in L^1(\R^+;\K^m)$ and $t \in \R^+$ such that $t+\|u\|_{L^1} \leq \delta$,  the solution to \eqref{eq:syst-f0-f1-fm} satisfies
    \begin{equation}
        |x(t;f,u,x_0)-x_0-x(t;f,u,0)| \leq C |x_0|(t+\|u\|_{L^1}).
    \end{equation}
    Moreover, for every $n\in\N^*$, there exists $C_n>0$ such that, given times $T_1, \dotsc, T_n \geq 0$ and $u_i \in L^1((0,T_i);\K^m)$, $T = T_1 + \dotsb + T_n$ and $u = u_1 \diamond \dotsb \diamond u_n$ such that $T + \| u \|_{L^1} \leq \delta$, then
    \begin{equation}
        \big| x(T;f,u,0) - \sum_{j=1}^{n} x(T_j;f,u_j,0) \big| \leq C_n (T+\|u\|_{L^1}) \sum_{j=1}^{n-1} |x(T_j;f,u_j,0)|.
    \end{equation}
\end{lemma}

\begin{proof}
    The first estimate follows from Grönwall's lemma.
    It entails the second one by induction.
\end{proof}

\begin{proposition}
    \label{prop:tgt-Brouwer}
    Let $m \in \N^*$ and $f_0, f_1, \dotsc, f_m$ smooth vector fields on a neighborhood of $0 \in \K^d$ with $f_0(0) = 0$.
    Let $(e_1,\dots,e_D)$ be an $\R$-basis of $\K^d$ ($D = d$ for $\K = \R$ and $D = 2d$ for $\K = \C$).
    Assume that for each $k \in \intset{1, D}$, there exist $n_k \in \N^*$ and two continuous maps $T \in [0,1] \mapsto u^{T,k,\pm} \in L^1((0,T);\K^m)$ such that, as $T \to 0$,
    \begin{equation}
        \label{eq:hyp-tgt}
        \| u^{T,k,\pm} \|_{L^1} = O(T) 
        \quad \text{and} \quad
        x(T;f,u^{T,k,\pm},0) = \pm T^{n_k} e_k + O\left(T^{n_k+1}\right).
    \end{equation}
    Then system \eqref{eq:syst-f0-f1-fm} is small-state STLC in the sense of \cref{def:STLC}.
\end{proposition}

\begin{proof}
    Let $N := \max n_k$.
    For $z = \sum_{k=1}^{D} z_k e_k \in \K^d$, we introduce
    \begin{itemize}
        \item for each $k \in \intset{1,D}$, the time
        $T_{z_k}:=|z_k|^{\frac{1}{n_k}}$ and the control
        $v^{z_k}:=u^{T_{z_k},k,\sign(z_k)}$; by \eqref{eq:hyp-tgt},
        \begin{equation}
            \label{eq:Tvzk}
            T_{z_k}+ \|v^{z_k}\|_{L^1} = \underset{z_k \to 0}{O}\left( |z_k|^{\frac{1}{n_k}} \right)
            \quad \text{and} \quad
            x(T_{z_k};f,v^{z_k},0)=z_k e_k + \underset{z_k \to 0}{O}(|z_k|^{1+\frac{1}{n_k}}),
        \end{equation}
        \item the time $T^z=T_{z_1} + \dotsb + T_{z_D}$ and the control $v^z=v^{z_1} \diamond \dots \diamond v^{z_D}$ (see \cref{Def:concat}) so that
        \begin{equation}
            \label{eq:Tz+uz}
            T^z+\|v^z\|_{L^1}
            =
            \sum_{k=1}^D \left(T_{z_k}+\|v^{z_k}\|_{L^1}\right)
            =O\left( |z|^{\frac{1}{N}} \right).
        \end{equation}
    \end{itemize}
    Thus, by \cref{Lem:concat} and using \eqref{eq:Tvzk}, we obtain
    \begin{equation}
        x(T^z;f,v^z,0)=z+O\left( |z|^{1+\frac{1}{N}} \right).
    \end{equation}
    In other words, there exist $C,R>0$ such that, for every $z \in \overline{B}(0,R)$,
    $|x(T^z;f,v^z,0) - z| \leq C |z|^{1+\frac{1}{N}}$.
    We may assume $R$ small enough so that $2 CR^{\frac{1}{N}}<1$.
    Let $x^* \in B(0,R/2)$. 
    The map
    \begin{equation}
        F : 
        \begin{cases}
            \overline{B}(0,R) \to \K^d \\
            z \mapsto z - x(T^z;f,v^z,0) + x^*
        \end{cases}
    \end{equation}
    is continuous and takes values in $\overline{B}(0,R)$, since, for every $z \in \overline{B}(0,R)$, $|F(z)| \leq C|z|^{1+\frac{1}{N}} + |x^*| \leq ( CR^{\frac{1}{N}}+\frac{1}{2} ) R \leq R$.
    By the Brouwer fixed-point theorem, there exists $z^* \in \overline{B}(0,R)$ such that $F(z^*)=z^*$ i.e.\ $x(T^{z^*};f,v^{z^*},0) = x^*$. Moreover, for $x^*$ small enough, by taking $R = 4|x^*|$, we deduce from \eqref{eq:Tz+uz} that
    \begin{equation}
        T^{z^*} + \| v^{z^*} \|_{L^1} \leq C' |x^*|^{\frac{1}{N}}
    \end{equation}
    i.e.\ small targets $x^*$ are reached with small times and controls ensuring small states all along the trajectory, so that \cref{def:STLC} holds.
\end{proof}

\subsection{Useful formula} \label{sec:AppendixA}

\subsubsection{Lie--Trotter product formula}

\begin{lemma}
    \label{lem:abstract-product}
    Let $f \in \cC^1(\K^d;\K^d)$ and $x_0 \in \K^d$ such that the solution to $\dot{x} = f(x)$ with $x(0) = x_0$ is well-defined on $[0,1]$.
    Let $(\Phi_t)_{t \geq 0}$ be a family of $\cC^1$ maps on $\K^d$, such that, for some $\sigma \in (0,1]$, uniformly on a closed ball containing $x([0,1])$ in its interior,
    \begin{equation}
        \Phi_t(y) = y + t f(y) + O(t^{1+\sigma})
    \end{equation}
    Then, as $N \to +\infty$,
    \begin{equation}
        \left| \Phi_{\frac 1 N}^N(x_0) - e^f(x_0) \right| = O\left(N^{-\sigma}\right).
    \end{equation}
\end{lemma}

\begin{proof}
    Let $K$ denote the assumed closed ball containing the full trajectory $x([0,1])$ in its interior.
    By assumption, and regularity of $f$, there exists $C, T > 0$ such that, for all $t \in [0,T]$ and $y, y' \in K$,
    \begin{equation}
        | \Phi_t(y) - e^{t f}(y) | \leq C t^{1+\sigma}
        \quad \text{and} \quad 
        | e^{t f}(y) - e^{t f}(y') | \leq (1+Ct) |y-y'|
    \end{equation}
    Let $y_0 := x_0$.
    For $0 \leq k < N$, let $x_{k+1} := x((k+1)/N) = e^{f/N} (x_k)$ and $y_{k+1} := \Phi_{1/N}(y_k)$.
    We want to prove that $y_N \to x_N = x(1)$.
    As long as $y_k \in K$, we have
    \begin{equation}
        |y_{k+1}-x_{k+1}| \leq \left(1+\frac CN\right)|y_k-x_k|+\frac {C}{N^{1+\sigma}}.
    \end{equation}
    Hence $\varepsilon_k := (1+C/N)^{-k} |y_k - x_k|$ satisfies $\varepsilon_0 = 0$ and $\varepsilon_{k+1} \leq \varepsilon_k + C / N^{1+\sigma}$ so $\varepsilon_k \leq C k / N^{1+\sigma}$.
    Thus $|y_k - x_k| \leq \frac{C}{N^{\sigma}} (1+\frac C N)^N \leq \frac{C e^C}{N^\sigma}$.
    In particular $y_k$ stays in $K$ and the estimate closes.
\end{proof}

\begin{lemma}
    \label{lem:LT}
    Let $h_1, \dotsc, h_q \in \cC^\infty(\K^d;\K^d)$ and $H := h_1 + \dotsb + h_q$.
    Let $x_0 \in \K^d$.
    If the solution to $\dot{x} = H(x)$ with $x(0) = x_0$ is well-defined on $[0,1]$, then
    \begin{equation}
        e^H(x_0) = \lim_{N \to +\infty} \left(e^{\frac{h_1}{N}}\cdots e^{\frac{h_q}{N}}\right)^N(x_0).
    \end{equation}
\end{lemma}

\begin{proof}
    For $t \geq 0$, define $\Phi_t := e^{t h_1} \dotsb e^{t h_q}$.
    Uniformly within any compact set, one has
    \begin{equation}
        \Phi_t(y) = y + t (h_1 + \dotsb + h_q)(y) + O(t^2)
        = y + t H(y) + O(t^2).
    \end{equation}
    Thus the result follows from \cref{lem:abstract-product}.
\end{proof}

\subsubsection{Bernoulli numbers} \label{subsec:Bernoulli}

\begin{definition}
    We use the notation $(B_n)_{n\in\N}$ to denote the Bernoulli numbers, which are defined (using the modern NIST sign and indexing convention) by the identity
    \begin{equation}
        \label{eq:def:bernoulli}
        \forall z \in \C, |z|<2\pi, \qquad
        \frac{z}{e^{z}-1} = \sum_{n=0}^{+\infty} B_n \frac{z^n}{n!} = 1 - \frac{z}{2} + \sum_{n=1}^{+\infty} B_{2n} \frac{z^{2n}}{(2n)!}.
    \end{equation}
\end{definition}

\begin{lemma}
    The Bernoulli numbers satisfy, for every $n \geq 2$,
    \begin{align}
        \label{eq:bernoulli.1}
        \sum_{k=0}^{n-1} \binom{n}{k} B_k
        & = 0, \\
        \label{eq:bernoulli.2}
        \sum_{k=0}^{n} \binom{n}{k} \frac{B_k}{n+1-k}
        & = 0.
    \end{align}
\end{lemma}

\begin{proof}
    These can be proved using the generating series \eqref{eq:def:bernoulli} of the Bernoulli numbers, by identification in $z = (e^z-1) \times (z / (e^z-1))$ for \eqref{eq:bernoulli.1} and in $1 = ((e^z-1)/z) \times (z/(e^z-1))$ for~\eqref{eq:bernoulli.2}.
\end{proof}

\begin{lemma}
    Let $T>0$ and $z \in \cC^1([0,T]; {\widehat{\cA}(X)})$. Then, {for every $t \in [0,T]$,}
    \begin{equation}
        \label{eq:dez-dt}
        \frac{\dd}{\dd t} \exp(z(t))
        = \exp(z(t)) \sum_{n=0}^{+\infty} \frac{(-1)^n}{(n+1)!} \ad^n_{z(t)} (\dot{z}(t)),
    \end{equation}
    \begin{equation}
        \label{dot{z}}
        \dot{z}(t) = \sum_{k = 0}^{+\infty} \frac{(-1)^k B_k}{k!} \ad^{k}_{z(t)} \left( \exp(-z(t)) \frac{\dd}{\dd t} \exp(z(t)) \right)
    \end{equation}
\end{lemma}

\begin{proof}
    The regularity assumption $z \in \cC^1([0,T]; \widehat{\cA}(X))$ is to be understood for projections on the finite dimensional spaces $\cA^n(X)$.
    We have
    \begin{equation}
        \begin{split}
            \frac{\dd}{\dd t} \exp(z(t))
            & = \frac{\dd}{\dd t} \left( \sum_{k=0}^{+\infty} \frac{z^k(t)}{k!} \right)
            = \sum_{k=0}^{+\infty} \frac{1}{(k+1)!} \sum_{j=0}^{k} z^j(t) \dot{z}(t) z^{k-j}(t)
            \\
            & = \exp(z(t)) \left( \sum_{l=0}^{+\infty} \frac{(-1)^l}{l!}z^l(t) \right)
            \left( \sum_{k=0}^{+\infty} \frac{1}{(k+1)!} \sum_{j=0}^{k} z^j(t) \dot{z}(t) z^{k-j}(t) \right).
        \end{split}
    \end{equation}
    Letting $n := k + l$ and $i := l + j$, we obtain that
    \begin{equation}
        \begin{split}
            \frac{\dd}{\dd t} \exp(z(t))
            & = \exp(z(t)) \sum_{n=0}^{+\infty} \frac{1}{(n+1)!} \sum_{i=0}^n z^{i}(t) \dot{z}(t) z^{n-i}(t)
            \sum_{l=0}^i (-1)^l \binom{n+1}{l}
        \end{split}
    \end{equation}
    The following formulas, which can be proved by induction using Pascal's rule,
    \begin{align}
        \sum_{l=0}^i (-1)^l \binom{n+1}{l} & = (-1)^i \binom{n}{i}, \\
        \sum_{i=0}^n (-1)^i \binom{n}{i} z^i y z^{n-i} & = (-1)^n \ad^n_z(y)
    \end{align}
    give the conclusion.
    Of course, if $z \in W^{1,1}((0,T);\widehat{\cA}(X))$ (i.e.\ absolutely continuous), equation \eqref{eq:dez-dt} remains true as an equality in $L^1((0,T);\widehat{\cA}(X))$, i.e.\ holding for almost every $t \in (0,T)$.

    From the change of index $n = k + \ell$ and the combinatorial relation \eqref{eq:bernoulli.2} we obtain
    \begin{equation}
        \sum_{k = 0}^{+\infty} \frac{(-1)^k B_k}{k!} \sum_{\ell = 0}^{+\infty} \frac{(-1)^\ell}{(\ell+1)!} \ad^{k+\ell}_{z(t)}(\dot{z}(t)) = \dot{z}(t),
    \end{equation}
    which implies \eqref{dot{z}}.
\end{proof}

\subsubsection{Conjugation formulas}

\begin{lemma}
    \label{lem:conjugation}
    For every $a,b \in \cL(X)$, the following equality holds in $\widehat{\cL}(X)$:
    \begin{equation}
        \exp(a) b \exp(-a) 
        = \sum_{m=0}^\infty \frac{1}{m!} \ad_a^m(b)
        = \exp(\ad_a) b.
    \end{equation}
\end{lemma}

\begin{proof}
    By definition, we have the following equality in $\widehat{\cA}(X)$:
    \begin{equation}
        \exp(a) b \exp(-a)
        = 
        \left( \sum_{q=0}^\infty \frac{1}{q!} a^q \right)
        b
        \left( \sum_{p=0}^\infty \frac{(-1)^p}{p!} a^p \right).
    \end{equation}
    Using the change of index $m=p+q$, we deduce
    \begin{equation}
        \exp(a) b \exp(-a) =
        \sum_{m=0}^\infty \frac{1}{m!}
        \sum_{p=0}^{m} (-1)^p \frac{m!}{(m-p)! p!} a^{m-p} b a^p
        = \sum_{m=0}^\infty \frac{1}{m!} \ad_a^m(b),
    \end{equation}
    which is the claimed equality.
\end{proof}

\begin{lemma}
    \label{Lem:conjug_flots}
    If $f,g$ are smooth vector fields on $\K^d$ such that $\Lie \left\{ f, g \right\}$ is nilpotent of step $N$, as long as the flows are defined, one has
    \begin{equation}
        e^{-f} e^g e^f = e^h
        \qquad \text{ where } \qquad
        h=\sum_{k=0}^{N-1} \frac{1}{k!} \ad_{f}^k(g).
    \end{equation}
\end{lemma}

\begin{proof}
    For any $a, b \in \cL(X)$, from \cref{lem:conjugation}, we obtain
    \begin{equation}
        \exp(a) \exp(b) \exp(-a) 
        = \exp(\exp(a) b \exp(-a))
        = \exp \left(\sum_{k=0}^\infty \frac{1}{k!} \ad_a^k(b) \right).
    \end{equation}
    Thus, for any $a, b \in \cL_N(X)$, we have the equality in $\cA_N(X)$:
    \begin{equation}
        \BCH_N(\BCH_N(a,b),-a) = \sum_{k=0}^{N-1} \frac{1}{k!} \ad_a^k(b).
    \end{equation}
    Hence the conclusion follows from \cref{prop:BCH}.
\end{proof}

\subsubsection{The Baker--Campbell--Hausdorff formula} 

\begin{proposition}
    \label{prop:BCH}
    Let $N \in \N^*$.
    There exists a Lie polynomial $\BCH_N$ in two indeterminates $A$ and $B$ such that, for any $A, B \in \cL_N(X)$, the following equality holds in $\cL_N(X)$:
    \begin{equation}
        \label{eq:BCH}
        \BCH_N(A,B) 
        = \log \left(\exp (A) \exp (B)\right)
        = A + B + \text{finite sum of brackets of $A$ and $B$.}
    \end{equation}
    Moreover, if $f$ and $g$ are smooth vector fields on $\K^d$ such that $\Lie \left\{ f, g \right\}$ is nilpotent of step $N$, as long as the flows are defined, one has
    \begin{equation}
        e^g e^f = e^{\BCH_N(f,g)}.
    \end{equation}
\end{proposition}

\begin{proof}
    The first statement is merely the historical Baker--Campbell--Hausdorff formula.
    The consequence for nilpotent vector fields is proved in \cite[Remark A.1]{Jean2014} for analytic vector fields, and in \cite[Section 5.2.1]{P1} without analyticity.
\end{proof}

\section*{Acknowledgments}

Frédéric Marbach thanks the organizers of the \emph{Normal forms and splitting methods} conference in Pornichet, June 2022, for providing a stimulating environment, particularly during the talk by Fernando Casas on \cite{Blanes22asm}, where the initial ideas for this research were conceived.

The three authors acknowledge support from grants ANR-25-CE40-2862-01 (Project MaStoC), ANR-24-CE40-5470 (Project CHAT), ANR-11-LABX-0020 (Labex Lebesgue), as well as from the Fondation Simone et Cino Del Duca -- Institut de France.

\bibliographystyle{abbrv}
\bibliography{biblio,Ma_Bibliographie}

\end{document}